\newcommand{\Cc}{\mathbb{C}}
\newcommand{\Nn}{\mathbb{N}}
\newcommand{\Zz}{\mathbb{Z}}
\newcommand{\Qq}{\mathbb{Q}}
\newcommand{\Ff}{\mathbb{F}}
\newcommand{\Aa}{\mathbb{A}}
\renewcommand {\leq}{\leqslant}
\renewcommand {\geq}{\geqslant}
\renewcommand {\le}{\leqslant}
\renewcommand {\ge}{\geqslant}
\newcommand{\lcm}{\mathop{\mathrm{lcm}}\nolimits}
\newcommand{\Spec}{\mathop{\mathrm{Spec}}\nolimits}
\newcommand{\defi}[1]{\emph{#1}}
\newcommand{\ul}[1]{\underline{#1}}
\theoremstyle{plain}
\newtheorem{theorem}{Theorem}[section]    
\newtheorem{lemma}[theorem]{Lemma}       
\newtheorem{proposition}[theorem]{Proposition}      
\newtheorem{corollary}[theorem]{Corollary}      
\newtheorem*{theorem*}{Theorem}
\newtheorem*{hypothesis*}{Schinzel Hypothesis}
\theoremstyle{remark}
\newtheorem{remark}[theorem]{Remark}   
\newtheorem{example}[theorem]{Example}
\renewcommand{\pod}[1]{\allowbreak\mathchoice
  {\if@display \mkern 8mu\else \mkern 8mu\fi (#1)}
  {\if@display \mkern 8mu\else \mkern 8mu\fi (#1)}
  {\mkern4mu(#1)}
  {\mkern4mu(#1)}
}
\title{Coprime values of polynomials in several variables}
\author{Arnaud Bodin}
\author{Pierre D\`ebes}
\email{arnaud.bodin@univ-lille.fr}
\email{pierre.debes@univ-lille.fr}
\address{Universit\'e de Lille, CNRS, Laboratoire Paul Painlev\'e, 59000 Lille, France}
\subjclass[2010]{Primary 12E05 ; Sec. 11A05}
\keywords{coprime polynomials, coprime integers, gcd.}
\thanks{\emph{Acknowledgment}. 
This work was supported in part by the Labex CEMPI  (ANR-11-LABX-0007-01) 
and by the ANR project ``LISA'' (ANR-17-CE40–0023-01).}
\date{\today}
\begin{document}

\begin{abstract}
Given two polynomials $P(\underline x)$, $Q(\underline x)$ in one or more variables and with integer coefficients, how does the property that they are coprime relate to their values $P(\underline n),  Q(\underline n)$ at integer points $\underline n$ being coprime?
We show that the set of all $\gcd (P(\underline n),  Q(\underline n))$ is stable under gcd and under lcm.
A notable consequence is a result of Schinzel: if in addition $P$ and $Q$ have no fixed prime divisor (i.e., no prime dividing all values $P(\underline n)$, $Q(\underline n)$), then $P$ and $Q$ assume coprime values at ``many'' integer points. 
Conversely we show that if ``sufficiently many'' integer points yield values that are coprime (or of small gcd) then the original polynomials must be coprime.
Another noteworthy consequence of this paper is a version ``over the ring''  of Hilbert's irreducibility theorem.
\end{abstract}

\dedicatory{Dedicated to Moshe Jarden on the occasion of his 80th birthday}

\maketitle



Let $P_1(\ul x),\ldots,P_s(\ul x) \in \Zz[\ul x]$ be $s \ge 2$ polynomials in $r\geq 1$ variables $\ul x= (x_1,\ldots,x_r)$ .
For $\ul n = (n_1,\ldots,n_r) \in \Zz^r$, we consider the corresponding values $P_i(\ul n)$.
Is there a connection between (a) $P_1(\ul x),\ldots,P_s(\ul x)$ being coprime as polynomials and (b) ``many'' of the values $P_1(\ul n),\ldots,P_s(\ul n)$ being coprime as integers?
Answers exist in both directions. 

\smallskip

Suppose that the polynomials $P_1(\ul x),\ldots,P_s(\ul x)$ are coprime and their values have no \emph{fixed divisors},
i.e., no prime number $p$ divides all $P_i(\ul n)$ (for all $i$, and all $\ul n$). Then it is true that for some $\ul n \in \Zz^r$, the integers $P_1(\ul n),\ldots,P_s(\ul n)$ are coprime: coprime polynomials assume coprime values. 
This is proved by Schinzel in  \cite{Sc02}; Ekedahl \cite{Ek} and Poonen \cite{Poo} even give, in the special case $s=2$, a formula for the density of the good $\ul n$; see Section \ref{ssec:poonen-intro} below, and also \cite{BDKN21} where Schinzel's result is extended to other rings than $\Zz$, including all UFDs and all Dedekind domains.

\smallskip

Here we put forward a more general property of polynomials that implies Schinzel's coprime conclusion. Set $d_{\ul{n}} = \gcd (P_1(\ul{n}),\ldots,P_s(\ul n))$, for $\ul n \in \Zz^r$, the gcd of the values. We  show, even without the fixed divisor assumption, that the set $\mathcal{D}$ of all these $d_{\ul n}$ is stable under gcd and lcm, i.e., is a lattice for the divisibility (Theorem \ref{th:main}); the quick proof that it yields Schinzel's theorem is in Section \ref{ssec:applications}.
This generalizes previous results  in one variable  \cite{BDN20}.

\smallskip

Regarding the Ekedahl--Poonen formula, we extend it to the case of $s\geq 2$ polynomials and to the situation that \emph{several} families of such polynomials are given (Section \ref{ssec:poonen-intro}).
We can then show  a version ``over the ring $\Zz$'' of Hilbert's Irreducibility Theorem (Theorem \ref{cor:HS}).

\smallskip

In the reverse direction, it is not true that if $P_1(\ul n),\ldots,P_s(\ul n)$ are coprime at one integer point $\ul n$ (or even at infinitely many) then the polynomials $P_1(\ul x),\ldots,P_s(\ul x)$ are coprime. 
However we show that the coprimality of $P_1(\ul x),\ldots,P_s(\ul x)$ does hold  if ``sufficiently many'' $\ul n$, in a density sense, can be found such that $P_1(\ul n),\ldots,P_s(\ul n)$ 
are coprime  (Theorem \ref {th:crit-coprime}).

\medskip
The Hilbertian specialization property has always been a central topic in Field Arithmetic. Through his work, 
Moshe Jarden has constantly promoted both the area and this subtopic. The celebrated ``Fried-Jarden book'', {\it the} Field 
Arithmetic reference, has been 
quite influential to both authors. With this paper, we are happy to contribute to the Israel Journal of Mathematics 
special volume dedicated to Moshe Jarden and to offer him as a final application a version ``over 
the ring'' of Hilbert's irreducibility theorem.

\section{Presentation}
\label{sec:presentation}

Throughout the paper, we adhere to the following notation. Given $s\ge2$ nonzero polynomials $P_1(\ul x),\ldots,P_s(\ul x)$ in $\Zz[\ul{x}]$ (where $\ul{x} = (x_1,\ldots,x_r)$ with $r\ge1$), we say that
they are \emph{coprime} (over the field $\Qq$) if no polynomial $D(\ul x) \in \Qq[\ul{x}]$ with $\deg D>0$ divides each of $P_1(\ul{x}),\ldots,P_s(\ul{x})$.
In the definition of $d_{\ul{n}} = \gcd (P_1(\ul{n}),\ldots,P_s(\ul n))$ ($\ul{n} \in \Zz^r$), we include the case where $P_1(\ul n)= \ldots =  P_s(\ul n)=0$ by defining $\gcd (0,\ldots,0) = 0$.
Finally we set $\mathcal{D} = \left\lbrace d_{\ul{n}} \mid \ul{n} \in \Zz^r \right\rbrace$.

\subsection{The stability result}

\begin{theorem}
\label{th:main}
If $P_1(\ul{x}),\ldots,P_s(\ul{x}) \in \Zz[\ul{x}]$ are $s\geq 2$ nonzero coprime polynomials, then the set $\mathcal{D} = \left\lbrace d_{\ul{n}} \mid \ul{n} \in \Zz^r \right\rbrace$ is stable under gcd and lcm.
\end{theorem}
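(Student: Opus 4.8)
The plan is to establish the two stability properties separately, and to reduce both to statements about a single prime at a time, since divisibility in $\Zz$ is controlled primewise. Write $v_p$ for the $p$-adic valuation. For a fixed prime $p$, let $e_p = \min_{\ul m \in \Zz^r} v_p\big(d_{\ul m}\big) = \min_i \min_{\ul m} v_p(P_i(\ul m))$ be the contribution of $p$ to the ``fixed divisor part'', and note $e_p = 0$ for all but finitely many $p$. The key local input I would isolate is the following: for each prime $p$ and each integer $a \ge e_p$ that actually occurs as $v_p(d_{\ul n})$ for some $\ul n$, one can in fact realize, simultaneously at a single point, the value $a$ at $p$ and the minimal value $e_q$ at every other prime $q$; more generally, given finitely many primes $p_1,\dots,p_k$ and target exponents $a_j = v_{p_j}(d_{\ul n_j})$ coming from points $\ul n_1,\dots,\ul n_k$, one can find one point $\ul n$ with $v_{p_j}(d_{\ul n}) = a_j$ for all $j$ and $v_q(d_{\ul n}) = e_q$ for all other $q$. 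Granting this ``local-global'' or CRT-type statement, both conclusions are immediate: given $d_{\ul n}, d_{\ul m} \in \mathcal D$, their gcd has $p$-adic valuation $\min(v_p(d_{\ul n}), v_p(d_{\ul m}))$, which for each $p$ equals one of the two admissible exponents, so assembling these over the finitely many relevant primes (taking $e_p$ elsewhere) produces a point realizing $\gcd(d_{\ul n}, d_{\ul m})$; the same argument with $\max$ in place of $\min$ handles the lcm, using coprimality to guarantee that the $\max$ exponent is still finite, i.e. that no prime divides $d_{\ul n}$ for \emph{all} $\ul n$ to arbitrarily high order (which would force a common factor of the $P_i$).

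To prove the local statement I would argue as follows. Fix a prime $p$ and a point $\ul n_0$ with $v_p(d_{\ul n_0}) = a$. I want to perturb $\ul n_0$ along a congruence $\ul n \equiv \ul n_0 \pmod{p^{a+1}}$ so as to keep $v_p(d_{\ul n}) = a$ while freely prescribing the residue of $\ul n$ modulo any finite set of other primes; by the Chinese Remainder Theorem such perturbations are abundant, so the point is that the $p$-adic condition is \emph{stable} along $\ul n \equiv \ul n_0 \pmod{p^{a+1}}$. Indeed if $a = e_p$, then $v_p(d_{\ul n}) \ge e_p$ always and, since some $P_i$ has $v_p(P_i) = e_p$ somewhere in the class (use that $v_p(P_i(\ul n))$ depends only on $\ul n \bmod p^{e_p+1}$ when it is $\le e_p$), one gets equality. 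If $a > e_p$, congruence mod $p^{a+1}$ pins down $v_p(P_i(\ul n))$ whenever it is $\le a$, hence pins down $v_p(d_{\ul n}) = \min_i v_p(P_i(\ul n))$ to be exactly $a$ provided it was $a$ at $\ul n_0$. Then realize the finitely many prime conditions $p_1,\dots,p_k$ independently — choose $\ul n$ to lie in the prescribed class mod $p_j^{a_j+1}$ for each $j$ (these moduli are pairwise coprime, so CRT applies) and, to control the remaining primes, note that only finitely many primes $q$ can divide some $P_i(\ul n)$ for the chosen $\ul n$ beyond those in our list with exponent exceeding $e_q$; a further congruence condition modulo those $q$ (again compatible by CRT) forces $v_q(d_{\ul n}) = e_q$ there.

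I expect the main obstacle to be precisely this last point: controlling the ``extra'' primes, i.e. ruling out that the perturbation introduces a new prime $q$ with $v_q(d_{\ul n}) > e_q$. One cannot literally list all such primes in advance because they depend on the point. The resolution I have in mind is a two-stage choice: first fix the behaviour at $p_1,\dots,p_k$ and at a safety set of small primes via a single congruence modulo $M = \prod_j p_j^{a_j+1} \cdot (\text{extra})$, obtaining \emph{one} candidate point $\ul n_1$; this determines a finite set $S$ of primes for which $v_q(d_{\ul n_1}) > e_q$; then re-run the argument with the enlarged list $\{p_1,\dots,p_k\} \cup S$, imposing at each $q \in S$ a congruence that forces $v_q$ down to $e_q$ (possible since $e_q$ is attained in \emph{some} residue class, and the classes mod the various prime powers can be combined by CRT). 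One must check these successive constraints remain consistent — they do, as they live at disjoint sets of primes — and that the coprimality hypothesis is what prevents the $e_q$-minimization from ever being obstructed, which is exactly where Theorem's hypothesis enters. The one-variable case in \cite{BDN20} presumably provides the template; the several-variable case should go through with $\ul n$ in place of $n$ once one observes that all the valuation conditions used depend only on $\ul n$ modulo suitable integers.
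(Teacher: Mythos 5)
There is a genuine gap, and it is precisely the one you flag yourself as the ``main obstacle'': controlling the primes $q$ outside your prescribed finite list. Your two-stage fix does not resolve it. After you choose $\ul n_1$ to satisfy congruences at $p_1,\dots,p_k$ and the safety primes, you find a finite set $S$ of bad primes, enlarge the list, and choose a new point $\ul n_2$ by CRT. But $\ul n_2$ is a \emph{different} point, and nothing prevents new primes $q'\notin\{p_1,\dots,p_k\}\cup S$ from dividing all the $P_i(\ul n_2)$. You would have to iterate indefinitely, and there is no reason the process stabilises. The underlying issue is structural: for $r\ge 2$ the set $\mathcal{D}$ can be infinite (see Example \ref{ex:intro}), equivalently there is in general \emph{no} nonzero integer $\delta$ divisible by every $d_{\ul n}$. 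In the one-variable case \cite{BDN20} such a $\delta$ exists (e.g. the resultant, or a nonzero element of $\langle P_1,\dots,P_s\rangle\cap\Zz$), it bounds all the bad primes \emph{in advance}, and the CRT argument you describe then closes without any termination issue; that is exactly how the lcm stability is proved in Section \ref{sec:one}. But that finiteness is lost once $r\ge 2$, so the primewise/CRT argument does not lift verbatim, contrary to what your final sentence suggests.

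The paper's resolution is not a refinement of this local argument but a geometric reduction. Given two points, one restricts the polynomials to a well-chosen affine line $u\mapsto u\,\ul n + t^\star\ul a^\star$ passing, in effect, close to the two points: Lemmas \ref{lem:rays} and \ref{lem:second} guarantee that for generic direction $\ul a^\star$ the restricted polynomials are coprime one-variable polynomials; the parameter $t^\star$ is chosen to be a multiple of suitable periods $\delta_1,\delta_2$ so that the specialisations $u=0$ and $u=1$ reproduce exactly the two gcd values $d_{\ul 0}$ and $d_{\ul n}$; then Theorem \ref{th:stableone} (the one-variable result, where the $\delta$-bound and periodicity are available) produces a point on that line realising the gcd or lcm. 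In short, the paper uses coprimality of the $P_i$ to cut down to a single variable \emph{before} doing any primewise work, which is what makes all the bad primes controllable. That dimension-reduction step is the missing ingredient in your proposal.
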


That is: 
if $d,d' \in \mathcal{D}$ then $\gcd(d,d') \in \mathcal{D}$ and $\lcm(d,d') \in \mathcal{D}$.
This is a generalization of the one variable case ($r=1$) done with S.~Najib \cite{BDN20}.

\begin{example}
\label{ex:intro}
Let $P(x,y) = x^2 - y^3$, $Q(x,y) = x(y + 2)+1$. Let $d_{m,n} = \gcd( P(m,n), Q(m,n) )$ and $\mathcal{D} = \{ d_{m,n} \}_{m,n \in\Zz}$. 
For instance $P(5,1) = 24$, $Q(5,1)=16$, hence $d_{5,1} = \gcd(24,16) = 8$. For $(m,n)=(1,-3)$, $d_{m,n} = 28$.
The gcd of $8$ and $28$ is $4$, and $4$ is 
an element of $\mathcal{D}$: $d_{5,5} = 4$.
Experimentation yields an infinite set:
\begin{multline*}
\mathcal{D}
= \{ 1, 2, 4, 7, 8, 14, 16, 23, 28, 29, 32, 37, 41, 46, 47, 49, \\
53, 56, 58, 59, 61, 64, 67, 74, 79, 82, 83, 89, 92, 94, 97, 98,\ldots \}
\end{multline*}
\end{example}

\subsection{Consequences} 
\label{ssec:applications}

The following two corollaries are quick consequences of Theorem \ref{th:main}. The first one is what we refer to as Schinzel's result in our introduction.

\begin{corollary}
\label{cor:coprime}
Let $P_1(\ul{x}),\ldots,P_s(\ul{x}) \in \Zz[\ul x]$ be $s\geq 2$ nonzero coprime polynomials.
Suppose that there is no prime number $p$ that divides $P_i(\ul n)$ for each $i=1,\ldots,s$ and every $\ul n \in \Zz^r$.
Then there exists $\ul n_0 \in \Zz^r$ such that $P_1(\ul n_0),\ldots,P_s(\ul n_0)$ are coprime integers. 
\end{corollary}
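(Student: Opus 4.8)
The plan is to derive this formally from Theorem~\ref{th:main}, using only that $\mathcal{D}$ is stable under the gcd of an arbitrary finite nonempty family of its elements; this extends the binary stability of Theorem~\ref{th:main} by an immediate induction on the number of elements. So the whole task reduces to exhibiting finitely many points $\ul n$ whose values $d_{\ul n}$ have overall gcd equal to $1$.

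First I would fix a base point. Since $P_1 \ne 0$ and $\Zz^r$ is infinite, there is $\ul m \in \Zz^r$ with $P_1(\ul m) \ne 0$; then $d_0 := d_{\ul m}$ divides $P_1(\ul m)$, so $d_0 \ge 1$. If $d_0 = 1$ we are done, so assume $d_0 > 1$ and let $p_1,\ldots,p_k$ be its finitely many prime divisors. Next, for each $j \in \{1,\ldots,k\}$ I would apply the no-fixed-divisor hypothesis to the prime $p_j$: it yields an index $i$ and a point $\ul n_j \in \Zz^r$ with $p_j \nmid P_i(\ul n_j)$, hence $p_j \nmid d_{\ul n_j}$.

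Now set $d := \gcd(d_0, d_{\ul n_1}, \ldots, d_{\ul n_k})$. By the iterated form of Theorem~\ref{th:main} we have $d \in \mathcal{D}$. On the other hand $d \mid d_0$, so every prime factor of $d$ is among $p_1,\ldots,p_k$; but $p_j \nmid d_{\ul n_j}$ forces $p_j \nmid d$ for each $j$. Therefore $d$ has no prime factor, i.e. $d = 1$, so $1 \in \mathcal{D}$: there is $\ul n_0 \in \Zz^r$ with $\gcd(P_1(\ul n_0),\ldots,P_s(\ul n_0)) = 1$, which is the claim.

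There is no serious obstacle here once Theorem~\ref{th:main} is granted; the only point deserving care is to start the argument from a \emph{nonzero} value $d_0$, so that "the set of primes dividing $d_0$" is finite and the reduction to finitely many instances of the hypothesis is legitimate. Everything else is bookkeeping.
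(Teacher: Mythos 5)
Your proof is correct and takes essentially the same route as the paper: both reduce, via iterated gcd-stability from Theorem~\ref{th:main}, to showing that the gcd of finitely many elements of $\mathcal{D}$ is $1$, and both deduce this from the no-fixed-divisor hypothesis. The paper packages the finite reduction via an enumeration of $\mathcal{D}\setminus\{0\}$ and a stabilizing decreasing sequence of partial gcds, whereas you handpick a base point and one witness point per prime factor of the base value; this is a cosmetic difference, not a different argument.
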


Moreover, the set of such $\ul n_0$ will be shown to be Zariski-dense in $\Zz^r$ (Corollary \ref{cor:zariski}),
and even of positive density (as discussed in \S \ref{ssec:poonen-intro} below and shown in \S \ref{sec:proof-poonen}).  

\begin{proof}[Proof of Corollary \ref{cor:coprime} assuming Theorem \ref{th:main}]
The set $\mathcal{D}\subset \Nn$ is not necessarily finite (for $r\ge2$). Let $\{ d_{i_j} \}_{j\in\Nn}$ be an enumeration
of $\mathcal{D}^\star = \mathcal{D} \setminus \{0\}$
and set $\delta_j = \gcd(d_{i_0},\ldots,d_{i_j})$. The sequence $(\delta_j)_{j\in\Nn}$ is a decreasing sequence of positive integers, hence is ultimately constant equal to some value $d^\star \in \Nn$,
and $d^\star = \gcd(\mathcal{D}^\star) = \min(\mathcal{D}^\star)$.

By Theorem \ref{th:main}, $\mathcal{D}$ is stable by gcd; so is $\mathcal{D}^\star$. Using $\gcd(a,b,c) = \gcd(\gcd(a,b),c)$, we have $\delta_j \in \mathcal{D}^\star$, for every $j\in\Nn$.
It follows that $d^\star\in \mathcal{D}^\star$.
The no fixed divisor assumption yields $d^\star = 1$. Hence $1 \in \mathcal{D}^\star$, thus giving the conclusion.
\end{proof}

\begin{corollary} 
Let $P_1(\ul{x}),\ldots,P_s(\ul{x}) \in \Zz[\ul x]$ be $s\geq 2$ nonzero polynomials with no common zero in $\Cc^r$. 
Then $\mathcal{D}$ is a finite subset of $\Zz$ stable under gcd and lcm. 
In particular, the smallest positive element 
$d^\star$ of $\mathcal{D}$ is a common divisor of all elements of $\mathcal{D}$ and the largest positive element $\mu^\star$ of $\mathcal{D}$ is a common multiple of all elements of $\mathcal{D}$.
\end{corollary}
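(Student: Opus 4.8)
The plan is to derive this from Theorem~\ref{th:main} together with Hilbert's Nullstellensatz, the latter supplying both the coprimality hypothesis needed to invoke the theorem and the arithmetic bound that forces $\mathcal{D}$ to be finite.

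First I would check that $P_1(\ul x),\ldots,P_s(\ul x)$ are coprime over $\Qq$. Since they have no common zero in $\Cc^r$, the Nullstellensatz gives $P_1\Cc[\ul x]+\cdots+P_s\Cc[\ul x]=\Cc[\ul x]$; as the $P_i$ have coefficients in $\Zz\subseteq\Qq$, a linear-algebra descent (equivalently, flatness of $\Cc$ over $\Qq$) yields the same identity over $\Qq[\ul x]$, so there are $A_1,\ldots,A_s\in\Qq[\ul x]$ with $\sum_i A_iP_i=1$. Consequently no $D\in\Qq[\ul x]$ with $\deg D>0$ can divide every $P_i$, since then $\sum_i A_iP_i$ would be a multiple of $D$. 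Hence $P_1(\ul x),\ldots,P_s(\ul x)$ are coprime and Theorem~\ref{th:main} applies: $\mathcal{D}$ is stable under gcd and lcm.

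Next, for finiteness, I would clear denominators in $\sum_i A_iP_i=1$ to obtain $N\in\Zz$, $N\ge 1$, and $B_1,\ldots,B_s\in\Zz[\ul x]$ with $\sum_i B_iP_i=N$. Evaluating at any $\ul n\in\Zz^r$ shows that $d_{\ul n}=\gcd(P_1(\ul n),\ldots,P_s(\ul n))$ divides $N$. Moreover $0\notin\mathcal{D}$: having $d_{\ul n}=0$ would mean $\ul n$ is a common zero of the $P_i$ in $\Zz^r\subseteq\Cc^r$, which is excluded by hypothesis. Thus $\mathcal{D}$ is a non-empty subset of the finite set of positive divisors of $N$, hence finite.

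Finally, the ``in particular'' is a formal consequence. Being a non-empty finite set of positive integers stable under gcd, $\mathcal{D}$ contains $d^\star:=\gcd(\mathcal{D})$: iterate the stability finitely many times, using $\gcd(a,b,c)=\gcd(\gcd(a,b),c)$. Since $d^\star$ divides every element of $\mathcal{D}$ and all elements are positive, $d^\star\le d$ for all $d\in\mathcal{D}$, so $d^\star$ is the smallest positive (indeed the smallest) element of $\mathcal{D}$, and it is a common divisor of all of them. Symmetrically, stability under lcm gives $\mu^\star:=\lcm(\mathcal{D})\in\mathcal{D}$, and as a multiple of every (positive) element it is the largest element and a common multiple of all of them. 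I do not expect a genuine obstacle here: the only non-elementary ingredient is the Nullstellensatz, used to produce the Bézout identity $\sum_i B_iP_i=N$; the rest is bookkeeping, and the statement is essentially the degenerate, ``finite'' instance of Theorem~\ref{th:main}.
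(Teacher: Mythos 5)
Your proof is correct and follows essentially the same path as the paper's: apply the Nullstellensatz to get a B\'ezout identity over $\Qq$, clear denominators to obtain $\sum_i B_iP_i=N\in\Zz\setminus\{0\}$, deduce that every $d_{\ul n}$ divides $N$ (hence $\mathcal{D}$ is finite and $0\notin\mathcal{D}$), and invoke Theorem~\ref{th:main} for stability. You are slightly more explicit than the paper on two minor points (the descent of the B\'ezout identity from $\Cc$ to $\Qq$, and the derivation of the ``in particular'' clause from finiteness plus stability), but the underlying argument is identical.
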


\begin{proof} 
Hilbert's Nullstellensatz
provides polynomials $A_1(\ul x),\ldots, A_s(\ul x) \in \Qq[x]$ such that $\sum_{i=1}^s A_i(\ul x) P_i(\ul x)=1$. Clearing the denominators yields polynomials $B_1(\ul x),\ldots, B_s(\ul x) \in \Zz[x]$ and $\Delta \in \Zz$, $\Delta\not= 0$, such that $\sum_{i=1}^s B_i(\ul x) P_i(\ul x)=\Delta$. It readily follows that every element $d_{\underline n} \in \mathcal{D}$ divides $\Delta$. Hence  
$\mathcal{D}$ is finite. The rest is given by Theorem \ref{th:main}.
\end{proof}

\subsection{Ekedahl--Poonen formula}
\label{ssec:poonen-intro}

Given $s\geq 2$ nonzero coprime polynomials $P_1(\ul{x}),\ldots,P_s(\ul{x}) \in \Zz[\ul x]$ as in Theorem \ref{th:main}, this formula
provides another refinement of Corollary \ref{cor:coprime}: it computes the density of integer points where the values are coprime.
Specifically let 
$$\textstyle \mathcal{R} = \big\{ \ul n \in \Zz^r \mid P_1(\ul n), \ldots,P_s(\ul n) \text{ are coprime} \big\}.$$

The \emph{density} $\mu(\mathcal{S})$ of a subset $\mathcal{S}$ of points with non-negative integer coordinates is defined as follows. 
For $B > 0$, set $\mathbb{B} = \llbracket 0,B-1\rrbracket^r$, where $\llbracket 0,B-1\rrbracket$ is the set of integers from $0$ to $B-1$. 
Then
$$\mu(\mathcal{S}) = \lim_{B \to +\infty} \frac{ \# (\mathcal{S}\cap \mathbb{B})}{\# \mathbb{B}}.$$

The sets we consider are subsets of $\Zz^r$ and our results 
are about their density within the $r$-dimensional quadrant 
$[0,+\infty[^r$. For simplicity of notation, we extend the 
definition of $\mu$ to subsets $S \subset \Zz^r$ 
by setting: $\mu (S) = \mu (S \cap ([0,+\infty[^r))$.
Remark \ref{rem:density} explains that, in addition to giving the 
density of $\mathcal{R}$, Theorem 1.5 shows that $\mathcal{R}$ is equidistributed 
among all $r$-dimensional quadrants.

Denote the set of prime numbers by $\mathcal{P}$.

\begin{theorem}[Ekedahl--Poonen density formula]
\label{th:poonen} 
Let $\ul x = (x_1,\ldots,x_r)$ ($r\ge1$).
Let $P_1(\ul x),\ldots,P_s(\ul x) \in \Zz[\ul x]$ ($s\ge2$) be nonzero coprime polynomials.
We have:
$$\mu(\mathcal{R}) = \prod_{p \in \mathcal{P}} \left( 1 - \frac{c_p}{p^r} \right)$$
where 
$c_p = \# \big\{ \ul n \in (\Zz/p\Zz)^r \mid P_1(\ul n) = 0 \pmod p, \ldots, P_s(\ul n) = 0 \pmod p \big\}$.
\end{theorem}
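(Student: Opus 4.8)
The plan is to compute $\mu(\mathcal{R})$ by a sieve, following the classical Ekedahl--Poonen strategy but carried out uniformly in $s \geq 2$. Set $V = \{ \ul x : P_1(\ul x) = \cdots = P_s(\ul x) = 0 \}$, a closed subscheme of $\Aa^r_\Zz$ whose generic fiber has codimension $\geq 2$ in $\Aa^r_\Qq$ because the $P_i$ are coprime (so their ideal is not contained in any height-one prime of $\Qq[\ul x]$). A point $\ul n \in \Zz^r$ lies in $\mathcal{R}$ if and only if for every prime $p$ the reduction $\ul n \bmod p$ avoids $V(\Ff_p)$. For a prime $p$ let $\mathcal{R}_p = \{ \ul n \in \Zz^r : \ul n \bmod p \notin V(\Ff_p) \}$; this is a union of residue classes modulo $p$, so its density is exactly $1 - c_p/p^r$, and the events for distinct primes are independent by CRT, giving $\mu\big( \bigcap_{p \leq M} \mathcal{R}_p \big) = \prod_{p \leq M} (1 - c_p/p^r)$ for any finite bound $M$. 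Hence $\mu(\mathcal{R}) \leq \prod_{p \leq M}(1 - c_p/p^r)$ for all $M$, and letting $M \to \infty$ gives the upper bound $\mu(\mathcal{R}) \leq \prod_{p \in \mathcal{P}}(1 - c_p/p^r)$, with the infinite product converging because $c_p = O(p^{r-2})$ on average (Lang--Weil on the codimension-$\geq 2$ set $V$).

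The substance is the matching lower bound, i.e.\ controlling the tail: one must show that the contribution of points $\ul n \in \mathbb{B}$ that survive the sieve up to $M$ but fail at some prime $p > M$ is negligible as $M \to \infty$, uniformly in $B$. Split such primes into a ``medium'' range $M < p \leq \sqrt{B}$ (or $p \leq B^{1/r}$, adjusting constants) and a ``large'' range $p > \sqrt{B}$. For the medium range, the number of $\ul n \in \mathbb{B}$ with $\ul n \bmod p \in V(\Ff_p)$ is $O\big( (B/p)^r \cdot c_p + \text{lower order} \big) = O(B^r c_p / p^r) + O(B^{r-1} p \cdot (\text{something}))$; summing the main term over $M < p \leq \sqrt B$ gives $O(B^r \sum_{p > M} c_p/p^r) = o(B^r)$ as $M \to \infty$ since the series converges. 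For the large range $p > \sqrt{B}$, one uses that $V$ has codimension $\geq 2$: a standard geometric argument (stratify $V$, bound the number of $\Zz$-points of bounded height on a variety of dimension $\leq r-2$ lying on the reduction mod a large prime) shows $\#\{ \ul n \in \mathbb{B} : \exists\, p > \sqrt B,\ \ul n \bmod p \in V(\Ff_p) \} = O(B^{r-1} \log B) = o(B^r)$. Combining, $\limsup_B \#(\mathcal{R} \cap \mathbb{B})/B^r \geq \prod_{p \leq M}(1-c_p/p^r) - \varepsilon(M)$ with $\varepsilon(M) \to 0$, yielding the lower bound.

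I expect the large-prime tail estimate to be the main obstacle: it is exactly the point where the coprimality hypothesis (equivalently $\mathrm{codim}\, V \geq 2$) is essential — if $V$ had a divisorial component the tail would contribute a positive density and the product would not converge to the answer. The cleanest route is to invoke the Ekedahl--Poonen machinery directly: reduce to the case of a single reduced closed subscheme of $\Aa^r_\Zz$ of codimension $\geq 2$ by replacing the system $(P_1, \ldots, P_s)$ with generators of the radical of the ideal they generate (this does not change $V(\Ff_p)$ for all but finitely many $p$, and those finitely many $p$ only affect finitely many factors, which is harmless), and then apply the known density statement for avoiding a fixed codimension-$\geq 2$ closed subset. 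The remaining bookkeeping — checking that the finitely many ``bad'' primes are absorbed correctly into the product, and that passing to the radical leaves $c_p$ unchanged for $p$ large — is routine. Finally, the equidistribution remark follows because the same sieve computation applies verbatim with $\mathbb{B}$ replaced by any box $\prod_i \llbracket a_i, a_i + B - 1 \rrbracket$, the residue-class counts being translation-invariant mod $p$.
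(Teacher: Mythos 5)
Your sieve skeleton matches the paper's: approximate $\mathcal{R}$ by $\mathcal{R}_{\le M}=\bigcap_{p\le M}\mathcal{R}_p$, compute $\mu(\mathcal{R}_{\le M})=\prod_{p\le M}(1-c_p/p^r)$ by CRT, note that the one-sided inequality $\mu(\mathcal{R})\le \mu(\mathcal{R}_{\le M})$ is easy, and then argue that the tail $\mathcal{Q}_{>M}=\bigcup_{p>M}\mathcal{Q}_p$ has negligible density. The medium-prime contribution $\sum_{M<p\le B}\#(\mathcal{Q}_p\cap\mathbb{B})/\#\mathbb{B}$ is indeed controlled by $\sum_{p>M}c_p/p^r$, and the codimension-$\geq 2$ bound $c_p=O(p^{r-2})$ is the right ingredient (the paper gets it from a general B\'ezout theorem of Lachaud--Rolland combined with the Ostrowski corollary, you invoke Lang--Weil; both work). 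Up to there you are aligned with Section 6 of the paper.

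The genuine gap is the large-prime estimate, i.e.\ bounding $\#\{\ul n\in\mathbb{B} : \exists\, p>B,\ p\mid P_i(\ul n)\ \forall i\}$. This is the only hard step, and your proposal does not actually prove it: ``stratify $V$, bound the number of $\Zz$-points of bounded height on a variety of dimension $\le r-2$ lying on the reduction mod a large prime'' is a gesture, not an argument, and the claimed bound $O(B^{r-1}\log B)$ is asserted without justification. The difficulty is concrete: for $\ul n\in\mathbb{B}$ the value $P_i(\ul n)$ has size $O(B^{\deg P_i})$, so it can have prime factors far larger than $B$; one cannot run CRT in this regime, and no local density computation applies. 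The paper handles this with an induction on $r$ after three reductions (replace the $P_i$ by irreducible factors; use a B\'ezout identity to replace one of the $P_i$ by a nonzero polynomial $\Delta\in\Zz[x_1,\ldots,x_{r-1}]$ in one fewer variable; and normalize so that $P_s=\Delta$ does not divide the leading $x_r$-coefficients of the others). The punchline is that, fixing $(n_1,\ldots,n_{r-1})$, the integer $\Delta(n_1,\ldots,n_{r-1})\ne 0$ has at most $\deg\Delta$ prime divisors $p>B$, and for each such $p$ the congruence $P_i(n_1,\ldots,n_{r-1},x)\equiv 0\ (p)$ has at most $\deg_{x_r}P_i$ roots once the leading coefficient is a unit mod $p$; this gives an $O(B^{r-1})$ count, with the bad cases $P_s(\ul n)=0$ (Zippel--Schwartz) and $p\mid$(leading coeffs) (induction on $r$) absorbed separately. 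None of this is in your sketch. Your alternative ``cleanest route'' of reducing to a known Ekedahl--Poonen statement for codimension-$\ge 2$ closed subschemes would be fine as a citation strategy, but it defers exactly this estimate to a black box rather than proving it; the paper chooses to reprove it, and that reproof is the content of the theorem.

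One smaller remark: passing to generators of the radical of $\langle P_1,\ldots,P_s\rangle$ does not obviously reduce $s$ to $2$, and changes the system of polynomials in a way you would have to track in the finitely many bad primes; the paper instead keeps the $P_i$ and works with the reductions mod $p$ directly via Corollary 3.3 (the Ostrowski-type statement), which is cleaner here. Your final remark on equidistribution over arbitrary boxes is correct and matches Remark 1.6 of the paper.
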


If we assume, as in Corollary \ref{cor:coprime}, that there is no prime $p$ dividing all values $P_1(\ul n),\ldots,P_s(\ul n)$ ($\ul n \in \Zz^r$), 
we obtain that {\it $\mathcal{R}$ is of positive density}: all terms in the product from Theorem \ref{th:poonen} are positive, and 
the product is convergent if $r\ge2$ and finite if $r=1$ (as shown in Section \ref{ssec:poonen-one}).

\begin{remark} \label{rem:density}
It follows from the formula for $\mu(\mathcal{R})$ that the density of $\mathcal{R}$ would be the same if computed w.r.t to any other $r$-dimensional quadrant, instead of $[0,+\infty[^r$: indeed the number $c_p$ of solutions of $P_1(\ul n) = \cdots = P_s(\ul n) = 0 \pmod p$ in a box of width $p$ is independent of the choice of the box. 
This also shows that for the density defined by 
$\tilde\mu(\mathcal{R}) = \lim_{B \to +\infty} \frac{ \# (\mathcal{R}\cap \mathbb{B})}{\# \mathbb{B}}$ with this time $\mathbb{B} = \llbracket -B,B \rrbracket^r$,  then 
$\tilde\mu(\mathcal{R}) = \mu(\mathcal{R})$.
\end{remark}

We provide a proof of the Ekedahl--Poonen formula in Section \ref{sec:proof-poonen}. It follows Poonen's proof with some adjustments. In particular we consider the general case $s \ge 2$ (and not just $s=2$).
We also consider in Section \ref{ssec:gen_sev_pol} the more general situation that \emph{several} families of coprime polynomials $\{P_{1i}(\ul x)\}_i$, $\{P_{2i}(\ul x)\}_i$,\ldots, $\{P_{\ell i}(\ul x)\}_i$ are given and one looks for the density of the set of points
$\ul n \in \Zz^r$ such that, for each $j=1,\ldots,\ell$, the integers $P_{j1}(\ul n),P_{j2}(\ul n),\ldots$ are coprime (Proposition \ref{prop:poonen-gen}). This generalization will be used to prove the case of \emph{several} polynomials in the following result.

\subsection{A version over the ring  of Hilbert's Irreducibility Theorem} \label{ssec:HS-intro}

\begin{theorem} \label{cor:HS} Let $\underline y= (y_1,\ldots,y_n)$ be $n\geq 1$ new variables.
Let $P_1(\underline x,\underline y), \ldots, P_\ell(\underline x,\underline y)$ be $\ell\geq 1$ polynomials, irreducible in $\Zz[\underline x,\underline y]$, of degree $\geq 1$ in $\underline y$. Assume there is no prime $p$ such that $\prod_{j=1}^\ell P_j(\underline n,\underline y) \equiv 0 \pmod{p}$ for every $\underline n \in \Zz^r$. Then the set  of all $\underline n\in \Zz^k$ such that $P_1(\underline n,\underline y),\ldots,P_\ell(\underline n,\underline y)$ are irreducible in $\Zz [\underline y]$ is Zariski-dense, \hbox{and even of positive $\tilde \mu$-density.} 
\end{theorem}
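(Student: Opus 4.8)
The plan is to split the condition ``$P_j(\underline n,\underline y)$ irreducible in $\Zz[\underline y]$'' into two pieces by Gauss's lemma — irreducibility in $\Qq[\underline y]$ (which, for a non-unit, forces $\deg_{\underline y}\ge1$) together with primitivity (content $1$) — handling the first by Hilbert's irreducibility theorem and the second by the several-families Ekedahl--Poonen formula of Proposition \ref{prop:poonen-gen}; the hypothesis is precisely what makes the relevant density positive.

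First I would write $P_j(\underline x,\underline y)=\sum_{\underline\alpha}c_{j,\underline\alpha}(\underline x)\,\underline y^{\underline\alpha}$ with $c_{j,\underline\alpha}\in\Zz[\underline x]$, and for each $j$ let $\mathcal C_j$ be the family of nonzero $c_{j,\underline\alpha}$. Since $P_j$ is irreducible in $\Zz[\underline x][\underline y]$ with $\deg_{\underline y}P_j\ge1$, it is primitive as a polynomial in $\underline y$ over $\Zz[\underline x]$; a positive-degree common divisor $D(\underline x)\in\Qq[\underline x]$ of all $c_{j,\underline\alpha}$ could be chosen primitive in $\Zz[\underline x]$, and would then divide $P_j$ in $\Zz[\underline x,\underline y]$ by Gauss, contradicting irreducibility. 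Hence each $\mathcal C_j$ is a family of coprime polynomials over $\Qq$. If some $\mathcal C_j$ is a singleton, coprimality makes it a nonzero constant, primitivity of $P_j$ makes it $\pm1$, and then $P_j=\pm y_i$ for some $i$ (as $\deg_{\underline y}P_j\ge1$); these $P_j$ specialize to themselves and are always irreducible in $\Zz[\underline y]$, so I discard those indices and assume each $\mathcal C_j$ has at least two elements.

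Now, for every $\underline n\in\Zz^r$ the content of $P_j(\underline n,\underline y)\in\Zz[\underline y]$ equals $\gcd\{c_{j,\underline\alpha}(\underline n)\}$, so the set $\mathcal R$ of $\underline n$ at which every $P_j(\underline n,\underline y)$ is primitive is exactly the set whose density is computed by Proposition \ref{prop:poonen-gen} for the families $\mathcal C_1,\ldots,\mathcal C_\ell$:
\[
\tilde\mu(\mathcal R)=\prod_{p\in\mathcal P}\Bigl(1-\frac{c_p}{p^r}\Bigr),\qquad c_p=\#\bigl\{\underline n\in(\Zz/p\Zz)^r \mid \exists\, j,\ P_j(\underline n,\underline y)\equiv0\bmod p\bigr\},
\]
where I used that $\prod_j P_j(\underline n,\underline y)\equiv0$ in the domain $\Ff_p[\underline y]$ exactly when some $P_j(\underline n,\underline y)\equiv0\bmod p$, and Remark \ref{rem:density} to pass to $\tilde\mu$. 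The hypothesis says precisely that for no prime $p$ does $\prod_j P_j(\underline n,\underline y)$ vanish mod $p$ for every $\underline n$, i.e.\ $c_p\le p^r-1$ for all $p$; hence every factor is positive and, by the convergence analysis of Section \ref{sec:proof-poonen} (a finite product when $r=1$), $\tilde\mu(\mathcal R)=\delta>0$.

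Finally, each $P_j$ is irreducible in $\Zz[\underline x][\underline y]$ with $\deg_{\underline y}P_j\ge1$, hence irreducible in $\Qq(\underline x)[\underline y]$. By Hilbert's irreducibility theorem applied with $\underline x$ the specialized variables and $\underline y$ the remaining block of variables (as treated in the Fried--Jarden book), the set $E_j$ of $\underline n\in\Zz^r$ for which $P_j(\underline n,\underline y)$ fails to be irreducible in $\Qq[\underline y]$ — this set also contains the proper Zariski-closed locus $\{\,\underline n : \deg_{\underline y}P_j(\underline n,\underline y)\le0\,\}$ — is thin, hence of $\tilde\mu$-density $0$ by the quantitative form of the theorem. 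The finite union $E=E_1\cup\cdots\cup E_\ell$ still has density $0$, so $\mathcal R\setminus E$ has $\tilde\mu$-density $\delta>0$, and on $\mathcal R\setminus E$ every $P_j(\underline n,\underline y)$ is primitive, of $\underline y$-degree $\ge1$, and irreducible over $\Qq$, hence irreducible in $\Zz[\underline y]$ by Gauss. Since a set of positive density cannot be contained in a proper Zariski-closed subset of $\Aa^r$ (such a subset has density $0$), $\mathcal R\setminus E$ is moreover Zariski-dense, which finishes the proof. The one genuine obstacle is pinning down the correct multivariable, quantitative Hilbert irreducibility statement — a density-$0$ exceptional set for a block $\underline y$ of variables; everything else is bookkeeping on top of Proposition \ref{prop:poonen-gen}.
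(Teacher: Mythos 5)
Your proposal is correct and follows essentially the same route as the paper: both intersect a density-one set from a quantitative (Cohen-style) Hilbert irreducibility theorem with the positive-density set of primitivity points supplied by Proposition \ref{prop:poonen-gen}, and then invoke Gauss's lemma. Your write-up adds a few details the paper leaves implicit (the coprimality of the coefficient family, the singleton edge case, the positive-density-implies-Zariski-dense step), but these are minor elaborations rather than a different argument.
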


Here, for ``many'' $\underline n \in \Zz^r$, the specialized polynomials $P_1(\underline n,\underline y),\ldots,P_\ell(\underline n,\underline y)$ are irreducible in $\Zz [\underline y]$, and not only in $\Qq [\underline y]$ as Hilbert's Irreducibility Theorem would conclude: we have the additional conclusion that each polynomial $P_j(\underline n,\underline y)$ is primitive, i.e., its coefficients are coprime integers. The assumption on the product $\prod_{j=1}^\ell P_j$ is clearly necessary and non void: for $P(x,y)=(x^2-x)y+(x^2-x+2)$, we have $P(n,y) \equiv 0 \pmod{2}$ and so $P(n,y)$ is divisible by $2$ in $\Zz [y]$, for every $n \in \Zz$.

Theorem \ref{cor:HS} compares to Theorem 1.6 from \cite{BDKN21} (joint with Najib and König). The latter considers more general rings
(UFDs or Dedekind domains, with a product formula), but does not have the density conclusion provided here in the special
case of the ring of integers. The density approach also allows a quick proof of Theorem \ref{cor:HS} assuming Theorem  \ref{th:poonen}. 
The argument below is for $\ell = 1$; a reduction to this case is explained in Section \ref{ssec:gen_sev_pol}.

\begin{proof}
Set $P=P_1$ and
let ${\mathcal H}_P$ be the subset of $\Zz^r$ of all $\underline n$ such that $P(\underline n,\underline y)$ is irreducible in $\Qq [\underline y]$.
From Theorem 1 of \cite[\S 13]{serreMW} (a result of S.D. Cohen),  ${\mathcal H}_P$ is of density $\tilde\mu(\mathcal{{\mathcal H}_P})=1$ (with $\tilde\mu$ 
the density from Remark \ref{rem:density}). Denote the coefficients of $P$, viewed as a polynomial in $\underline y$,
by $P_1(\ul x),\ldots, P_s(\ul x)$ and consider the set $\textstyle \mathcal{R}$ from Section \ref{ssec:poonen-intro} of all $\underline n\in \Zz^r$ such that $P_1(\ul n),\ldots, P_s(\ul n)$ 
are coprime. The assumption of Theorem \ref{cor:HS} corresponds to $P_1(\ul x),\ldots, P_s(\ul x)$ having no fixed divisor. From Theorem \ref{th:poonen} and Remark \ref{rem:density}, 
we have $\tilde\mu(\textstyle \mathcal{R})>0$. It follows that  $H={\mathcal H}_P \cap \textstyle \mathcal{R}$ is of positive $\tilde \mu$-density, thus proving the result since for every $\underline n \in H$, the polynomial
$P(\underline n, \underline y)$ is irreducible in $\Zz[\underline y]$.
\end{proof}

\subsection{A criterion for coprimality}
\label{ssec:criterion}

In our introduction, we raised this reverse question:
to what extent existence of coprime values forces the coprimality of the polynomials?
For one variable polynomials we have this coprimality criterion involving the gcd in $\Zz$ of some values. 
Define the \defi{normalized height} of a degree $d$ polynomial 
$P(x)= a_dx^d+ \cdots + a_0$ by
$H(P) = \max_{i=0,\ldots,d-1} \left| \frac{a_i}{a_d} \right|.$

\begin{proposition}[\hbox{\cite[Proposition 5.1]{BDN20}}]
\label{prop:coprime}
Let $P_1,\ldots,P_s \in \Zz[x]$ be $s\geq 2$ nonzero polynomials and $H$ the minimum of the normalized heights $H(P_1),\ldots,H(P_s)$. Then $P_1,\ldots,P_s$ are coprime if and only if there exists $n\ge 2H+3$ such that $\gcd(P_1(n),\ldots,P_s(n)) \le \sqrt{n}$.
\end{proposition}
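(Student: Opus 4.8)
The plan is to prove the two implications separately; the forward one (coprimality $\Rightarrow$ existence of a point of small gcd) is elementary, while the converse carries all the arithmetic content. For the forward direction I would use that $\Qq[x]$ is a PID: coprimality of $P_1,\ldots,P_s$ means the ideal they generate in $\Qq[x]$ is the whole ring, so there are $A_i \in \Qq[x]$ with $\sum_i A_i P_i = 1$; clearing denominators produces $B_i \in \Zz[x]$ and a nonzero integer $\Delta$ with $\sum_i B_i(x) P_i(x) = \Delta$. Then for every $n$ the integer $d_n = \gcd(P_1(n),\ldots,P_s(n))$ divides $\Delta$, so $d_n \neq 0$ and $d_n \le |\Delta|$. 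It then suffices to pick any integer $n$ with $n \ge 2H+3$ and $n \ge \Delta^2$: for such $n$ one has $d_n \le |\Delta| \le \sqrt n$.

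For the converse I would argue by contraposition, assuming $P_1,\ldots,P_s$ are \emph{not} coprime and showing that $d_n > \sqrt n$ for every integer $n \ge 2H+3$. By Gauss's lemma there is a primitive $D \in \Zz[x]$ with $\deg D \ge 1$ dividing each $P_i$ in $\Zz[x]$; write $P_i = D\, Q_i$ with $Q_i \in \Zz[x]$, so that $D(n) \mid P_i(n)$ for all $i$ and all $n$. Let $P_{i_0}$ be one of the $P_i$ with $H(P_{i_0}) = H$. The next step is Cauchy's classical bound on roots: every complex root $\alpha$ of $P_{i_0}$, and hence (since $D \mid P_{i_0}$) every complex root of $D$, satisfies $|\alpha| \le 1 + H$. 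Since $n \ge 2H+3 > 1+H$, neither $P_{i_0}$ nor $D$ vanishes at $n$; in particular $d_n \neq 0$ and $|D(n)|$ divides $d_n$, so $|D(n)| \le d_n$.

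It remains to bound $|D(n)|$ from below. Writing $D(x) = c \prod_j (x - \alpha_j)$ over $\Cc$ with $|c| \ge 1$, for an integer $n$ in our range I would estimate
\[
|D(n)| \;\ge\; \prod_j\, (n - |\alpha_j|) \;\ge\; (n - 1 - H)^{\deg D} \;\ge\; n - 1 - H,
\]
the last inequality because $n - 1 - H \ge H+2 \ge 1$ and $\deg D \ge 1$. Finally $n \ge 2H+3$ gives $H \le (n-3)/2$, hence $n - 1 - H \ge (n+1)/2$, and $(n+1)/2 > \sqrt n$ for every integer $n \ge 2$ by the AM--GM inequality. Therefore $d_n \ge |D(n)| > \sqrt n$, contradicting the existence of an $n \ge 2H+3$ with $d_n \le \sqrt n$; this establishes the contrapositive and completes the proof.

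I do not expect a genuinely hard step here: the only real input is Cauchy's root bound, and the crux is simply that the threshold $2H+3$ is chosen exactly so that the chain $|D(n)| \ge n-1-H \ge (n+1)/2 > \sqrt n$ goes through for every admissible $n$. The two points I would be careful about are that the common divisor can be taken primitive in $\Zz[x]$ (so that its leading coefficient, being of absolute value $\ge 1$, only helps the lower bound), and that $d_n$ is genuinely nonzero in both directions — from $\Delta \neq 0$ in the forward implication and from the root bound on $P_{i_0}$ in the converse — so that a divisibility relation upgrades to the numerical inequality one needs.
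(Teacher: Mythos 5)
The paper states this result with a citation to \cite[Proposition 5.1]{BDN20} and does not reprove it, so there is no in-paper argument to compare against. Your proof is correct, and the way the constants $2H+3$ and $\sqrt n$ emerge from Cauchy's root bound makes clear this is the intended argument: in the forward direction a cleared B\'ezout identity $\sum B_iP_i=\Delta$ with $\Delta\in\Zz\setminus\{0\}$ gives $d_n\mid\Delta$ so $d_n\le\sqrt n$ once $n\ge\Delta^2$; for the converse, taking a primitive common divisor $D\in\Zz[x]$ of degree $\ge 1$ via Gauss's lemma, noting that every root of $D$ is a root of $P_{i_0}$ and hence of modulus $\le 1+H$, yields $P_{i_0}(n)\ne 0$ (so $d_n\ne 0$) and the chain $d_n\ge|D(n)|\ge(n-1-H)^{\deg D}\ge n-1-H\ge(n+1)/2>\sqrt n$ for every integer $n\ge 2H+3$. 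The two points you flag as needing care (primitivity of $D$, nonvanishing of $d_n$) are indeed the places where a careless write-up could slip, and you handle both correctly.
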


In particular if $P_1(n),\ldots,P_s(n)$ are coprime (as integers) for some sufficiently large $n$ then $P_1(x),\ldots,P_s(x)$ are coprime (as polynomials). 
We wish to generalize this result to polynomials in several variables. But the following example proves that evaluation at one point, however big it is, may not give information on the coprimality of the polynomials: 
with $P(x,y) = (x-y)x$ and $Q(x,y) = (x-y)y$, we have $\gcd( P(n+1,n), Q(n+1,n) ) = 1$,
and so infinitely many points $(n+1,n)$ where the gcd is small, despite the polynomials not being coprime.

The following result however ensures that if the gcd $d_{\ul n}$ is small for ``sufficiently many'' $\ul n$, in a stronger density sense, then the polynomials are coprime.

\begin{theorem}
\label{th:crit-coprime}
Let $P_1(\ul{x}),\ldots,P_s(\ul{x}) \in\Zz[\ul x]$ be $s\geq 2$ nonzero polynomials in $r$ variables. Let $\ell = \max (\deg P_1,\ldots,\deg P_s)$ and $S$ be a nonempty finite set of $\Zz$. Let $k>0$.
If
$$\pi_k := \frac{\# \left\{\ul{n} \in S^r  \mid d_{\ul n} \le k \right\}}{\#S^r}> \frac{(2k+1)\ell}{\# S}$$
then $P_1(\ul {x}),\ldots,P_s(\ul {x})$ are coprime polynomials.
\end{theorem}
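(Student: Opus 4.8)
To prove Theorem~\ref{th:crit-coprime}, the plan is to argue by contraposition: assuming $P_1(\ul x),\ldots,P_s(\ul x)$ are \emph{not} coprime, I will bound the size of the ``bad set'' $\mathcal B=\{\ul n\in S^r\mid d_{\ul n}\leq k\}$ and conclude that $\pi_k\leq\frac{(2k+1)\ell}{\#S}$. The first move is to extract a convenient common factor over $\Zz$: since the $P_i$ admit a nonconstant common divisor in $\Qq[\ul x]$, Gauss's lemma provides a primitive polynomial $D(\ul x)\in\Zz[\ul x]$ with $\deg D\geq1$ dividing every $P_i$ in $\Zz[\ul x]$, say $P_i=D\,R_i$ with $R_i\in\Zz[\ul x]$. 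Each $R_i$ is nonzero (because $P_i$ is), has total degree $\deg P_i-\deg D\leq\ell-\deg D$, and $1\leq\deg D\leq\ell$.

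Next I would confine $\mathcal B$ to a short union of hypersurface slices inside $S^r$. If $\ul n\in\mathcal B$ with $d_{\ul n}\geq1$, then $D(\ul n)$ divides every $P_i(\ul n)$, hence divides $d_{\ul n}$; moreover $D(\ul n)\neq0$, for otherwise all $P_i(\ul n)=D(\ul n)R_i(\ul n)$ would vanish and $d_{\ul n}$ would be $0$. Thus $1\leq|D(\ul n)|\leq d_{\ul n}\leq k$. If instead $d_{\ul n}=0$, then either $D(\ul n)=0$, or $D(\ul n)\neq0$ and then $R_1(\ul n)=0$, since $D(\ul n)R_1(\ul n)=P_1(\ul n)=0$ and $\Zz$ is an integral domain. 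Altogether
\[
\mathcal B\ \subseteq\ \Big(\bigcup_{|j|\leq k}\{\,\ul n\in S^r\mid D(\ul n)=j\,\}\Big)\ \cup\ \{\,\ul n\in S^r\mid R_1(\ul n)=0\,\}.
\]

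Then I would count, using the Schwartz--Zippel estimate, valid over the integral domain $\Zz$ by a quick induction on $r$: a nonzero polynomial of total degree $m$ in $r$ variables has at most $m\,\#S^{r-1}$ zeros in $S^r$. Each $D-j$ is nonzero (as $D$ is nonconstant) of total degree $\deg D$, there are at most $2k+1$ relevant values of $j$, and $R_1$ is nonzero of total degree at most $\ell-\deg D$. Hence
\[
\#\mathcal B\ \leq\ (2k+1)(\deg D)\,\#S^{r-1}+(\ell-\deg D)\,\#S^{r-1}\ =\ (2k\deg D+\ell)\,\#S^{r-1}\ \leq\ (2k+1)\,\ell\,\#S^{r-1},
\]
the last step because $1\leq\deg D\leq\ell$ and $k>0$. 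Dividing by $\#S^r$ gives $\pi_k\leq\frac{(2k+1)\ell}{\#S}$, contradicting the hypothesis, so the polynomials must be coprime.

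This is mostly bookkeeping once set up; the one point I would handle with care is the degenerate case $d_{\ul n}=0$, where $D(\ul n)$ need not vanish and the auxiliary slice $\{R_1=0\}$ must be added — the degree accounting is arranged precisely so that the two contributions to $\#\mathcal B$ sum to $(2k+1)\ell\,\#S^{r-1}$. I would also be careful to state the zero-counting bound over $\Zz$ rather than over a field, which is what keeps the whole argument inside $\Zz[\ul x]$ without having to pass to $\Qq$.
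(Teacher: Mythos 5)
Your proof is correct and uses the same engine as the paper's — the Schwartz--Zippel bound applied to the level sets of a nonconstant common factor $D$ — but your treatment of the degenerate case $d_{\ul n}=0$ is in fact more careful than what appears in print. The paper's proof deduces $|D(\ul n)|\le d_{\ul n}$ from $D(\ul n)\mid d_{\ul n}$ and then uses the inclusion $\{\ul n\in S^r : d_{\ul n}\le k\}\subset \{\ul n\in S^r : |D(\ul n)|\le k\}$; this step breaks down whenever $d_{\ul n}=0$ while $D(\ul n)\ne 0$, which is certainly possible for $r\ge 2$ (take $P_1=xy$, $P_2=xz$, so $D=x$, and $\ul n=(1,0,0)$: then $d_{\ul n}=\gcd(0,0)=0$ but $D(\ul n)=1$).

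You instead factor $P_i = D\,R_i$ over $\Zz[\ul x]$, observe that a point with $d_{\ul n}=0$ and $D(\ul n)\ne 0$ must satisfy $R_1(\ul n)=0$, and add the slice $\{R_1=0\}$ to the cover of the bad set. The degree bookkeeping then comes out exactly right: $(2k+1)\deg D + (\ell-\deg D)=2k\deg D+\ell\le (2k+1)\ell$, using $\deg D\le\ell$ and $k>0$, so the extra slice is absorbed at no cost in the final estimate. In short, same idea and same bound, but a genuine gap in the published argument is closed along the way — a correct and welcome refinement.
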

For $k=1$, we have 
$\pi_1 := \frac{\# \left\{  \ul{n} \in S^r \mid d_{\ul n} \le 1\right\}}{\#S^r}$.
Theorem \ref{th:crit-coprime} states that if $\pi_1 > \frac{3\ell}{\# S}$
then $P_1(\ul {x}),\ldots,P_s(\ul {x})$ are coprime polynomials; and clearly this also implies that $P_1(\ul{x}),\ldots,P_s(\ul{x})$ have no fixed prime divisor.
This criterion is of interest because of the Ekedahl--Poonen density formula.
If polynomials $P_1(\ul{x}),\ldots,P_s(\ul{x})$ are coprime  and have no fixed  prime divisor, then $\pi_1$ must be positive for sufficiently large $S$, and so up to taking $S$ large enough, the criterion will indeed reach the coprimality conclusion.

\begin{example}
Let $P(x,y), Q(x,y)\in \Zz[x,y]$ be two nonzero polynomials of degree $\leq \ell:=10$.
Let $S=\{1,2,\ldots,100\}$ with $\#S=100$.
If for more than $30\%$ of $(m,n) \in S^2$, we have 
$d_{m,n} = 1$ (i.e., $P(m,n)$ and $Q(m,n)$ coprime)
or $d_{m,n} = 0$ (i.e., $P(m,n)=Q(m,n)=0$), 
then we have $\pi_1 > \frac{30}{100}$, and so, from Theorem \ref{th:crit-coprime}, $P(x,y)$ and $Q(x,y)$ are coprime polynomials.
\end{example}

\begin{proof}[Proof of Theorem \ref{th:crit-coprime}]
It relies on the Zippel--Schwartz lemma which is usually stated as a probability result, but in fact is an enumerative result.

\smallskip

\noindent
\textbf{Zippel--Schwartz lemma.} \emph{Let $P(x_1,\ldots,x_r)$ be a nonzero polynomial of degree $\ell$ over a field $K$.
Let $S$ be a nonempty finite set of $K$. 
Then 
$$\frac{\# \left\{ (x_1,\ldots,x_r) \in S^r \mid P(x_1,\ldots,x_r)=0 \right\}}{\#S^r} \le \frac{\ell}{\# S}.$$}

Let $D(\ul{x}) = \gcd(P_1(\ul{x}), \ldots,P_s(\ul{x}))$.
Then $\deg D \le \ell$.
Note further that $D(\ul{n})$ divides $d_{\ul{n}} = \gcd(P_1(\ul{n}), \ldots,P_s(\ul{n}))$, so that
$|D(\ul{n})| \le d_{\ul{n}}$.
Now assume, by contradiction, that $D$ is a non constant polynomial.
We use the Zippel--Schwartz lemma to bound the number of solutions to the equations $D(\ul{n}) = j$. 
Specifically we have:
\begin{align*}
\pi_k 
&= \frac{\# \left\{ \ul{n} \in S^r \mid d_{\ul n} \le k \right\}}{\#S^r} 
\le \frac{\# \left\{  \ul{n} \in S^r  \mid |D(\ul{n})| \le k \right\}}{\#S^r} \\
&\le \sum_{j=-k}^{k} \frac{\# \left\{  \ul{n} \in S^r  \mid D(\ul{n}) = j \right\}}{\#S^r} 
\le (2k+1) \frac{\ell}{\# S} \qedhere
\end{align*}

\end{proof}

\smallskip

The paper is organized as follows. In Section \ref{sec:one}, 
we focus on the case of polynomials in one variable. 
In Section \ref{sec:specialization}, we present a tool of frequent use in the paper about how coprimality is preserved by specialization, 
in the vein of the Bertini--Noether and Ostrowski theorems for irreducibility (Proposition \ref{prop:bertini-intro}).
Section \ref{sec:lemma} is devoted to a technical lemma, used in Section \ref{sec:proof} for the proof of Theorem \ref{th:main}.
We end in Section \ref{sec:proof-poonen} with a proof of the Ekedahl--Poonen formula in the case of several polynomials.

\section{The one variable case}
\label{sec:one}

The case of one variable polynomials plays a central role: first, some of the general results can be interestingly improved; 
secondly, most results in several variables will follow by reduction from the one variable case.

\subsection{Stability by gcd and lcm}

\begin{theorem}[\hbox{\cite[Prop. 3.2 and 3.3]{BDN20}}]
\label{th:stableone}
Let $P_1(x),\ldots,P_s(x) \in\Zz[x]$ be nonzero coprime polynomials.
Set $d_n = \gcd (P_1(n),\ldots,P_s(n))$ ($n\in \Zz$). 
Then the set
$\mathcal{D} = \big\lbrace d_n \mid n \in \Zz \big\rbrace$
is stable under gcd and lcm.
Moreover there is a nonzero $\delta\in \Zz$ that is a common multiple  to all $d_n$  and such that the sequence $(d_n)_ {n\in\Zz}$ 
is periodic of period $\delta$.
Hence $\mathcal{D}$ is a finite set.
\end{theorem}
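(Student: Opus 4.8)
The plan is to reduce everything to local, prime-by-prime information and then glue with the Chinese Remainder Theorem. I would carry this out in three steps.

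First, I would produce a common multiple. Since $P_1,\dots,P_s$ are coprime in $\Qq[x]$, a B\'ezout relation gives $A_i\in\Qq[x]$ with $\sum_i A_iP_i=1$; clearing denominators yields $B_i\in\Zz[x]$ and a nonzero integer $\Delta$ with $\sum_i B_i(x)P_i(x)=\Delta$. Evaluating at $n\in\Zz$, the integer $d_n$ divides each $P_i(n)$, hence divides $\sum_i B_i(n)P_i(n)=\Delta$. So every $d_n$ divides $\Delta$ (in particular $d_n\ge1$, since $\Delta\neq0$, which also shows the $P_i(n)$ are not all zero). This already gives that $\mathcal D$ is a finite set of divisors of $\Delta$, and that $\Delta$ is a common multiple of all the $d_n$.

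Second, I would upgrade ``bounded'' to ``periodic''. The key observation is that for a prime $p$ and an exponent $k\ge1$ one has $p^k\mid d_n$ if and only if $p^k\mid P_i(n)$ for all $i$, and this last condition depends only on the class of $n$ modulo $p^k$. Together with the bound $v_p(d_n)\le v_p(\Delta)$ from Step 1, this forces $v_p(d_n)=v_p(d_{n'})$ as soon as $n\equiv n'\pmod{p^{\,v_p(\Delta)+1}}$. Taking $\delta=\prod_{p\mid\Delta}p^{\,v_p(\Delta)+1}$, we get $n+\delta\equiv n$ modulo every $p^{\,v_p(\Delta)+1}$, hence $v_p(d_{n+\delta})=v_p(d_n)$ for all $p\mid\Delta$, while for $p\nmid\Delta$ both valuations are $0$ because $d_n\mid\Delta$. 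Therefore $d_{n+\delta}=d_n$ for every $n$, i.e. $(d_n)_{n\in\Zz}$ is $\delta$-periodic; and $\Delta\mid\delta$, so $\delta$ is a common multiple of all $d_n$ as required.

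Third, I would deduce stability under gcd and lcm. Write $d=d_a$, $d'=d_b$ with $a,b\in\Zz$. By Step 2, $v_p(d_n)$ depends only on $n$ modulo $p^{\,v_p(\Delta)+1}$ for $p\mid\Delta$, and vanishes for $p\nmid\Delta$. Using the Chinese Remainder Theorem (the moduli $p^{\,v_p(\Delta)+1}$ for distinct $p\mid\Delta$ being pairwise coprime), I would choose $c\in\Zz$ congruent to $a$ modulo $p^{\,v_p(\Delta)+1}$ for every $p\mid\Delta$ with $v_p(d_a)\le v_p(d_b)$, and congruent to $b$ for the remaining primes dividing $\Delta$. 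Then $v_p(d_c)=\min\!\big(v_p(d_a),v_p(d_b)\big)$ for all primes $p$, i.e. $d_c=\gcd(d,d')$, so $\gcd(d,d')\in\mathcal D$. Swapping the roles of ``$\le$'' and ``$\ge$'' (equivalently, replacing $\min$ by $\max$) gives $c'$ with $d_{c'}=\lcm(d,d')$, so $\lcm(d,d')\in\mathcal D$ as well. I expect the only real content to be the localization principle of Step 2 --- the twin facts that $p^k\mid d_n$ is a congruence condition on $n$ and that $v_p(d_n)\le v_p(\Delta)$; periodicity, the CRT gluing for gcd/lcm, and finiteness then follow formally. The single point needing a little care is the degenerate case $P_i(n)=0$, which is painlessly excluded because $\sum_i B_i(n)P_i(n)=\Delta\neq0$ prevents all the $P_i(n)$ from vanishing simultaneously, so $d_n$ is a genuine positive integer dividing $\Delta$.
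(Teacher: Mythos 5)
Your proof is correct. It uses the same CRT--gluing idea that the paper uses for the lcm part of Theorem \ref{th:stableone} (the paper's own proof treats only lcm stability explicitly and cites \cite{BDN20} for gcd stability and periodicity), with moduli $p_i^{\alpha_i+1}$ tailored to the particular pair of values $d_{n_1},d_{n_2}$. What you do differently is to extract the uniform \emph{localization principle} first --- $v_p(d_n)$ depends only on $n\bmod p^{\,v_p(\Delta)+1}$, with $v_p(d_n)\le v_p(\Delta)$ --- and then read off periodicity, gcd-stability, and lcm-stability as three formal consequences of that single statement by choosing the CRT data appropriately. This buys a single clean lemma in place of three parallel arguments, and makes the choice of period $\delta=\prod_{p\mid\Delta}p^{\,v_p(\Delta)+1}$ (and the divisibility $\Delta\mid\delta$) transparent; the paper's version, working directly with the factorization of $\gcd$ or $\lcm(d_{n_1},d_{n_2})$, is marginally more economical on the moduli but has to repeat the CRT step for each of the three claims.
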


As $P_1(x),\ldots,P_s(x)$ are coprime, note that it cannot happen that 
$P_1(n) = \ldots = P_s(n) = 0$.
The periodicity result is specific to the one variable case (see \cite[\S 2.5]{BDN20}); $\delta$ can be taken to be any nonzero element of the ideal $\langle P_1,\ldots,P_s\rangle \cap \Zz\subset \Zz[\ul x]$.
For two polynomials $P(x)$ and $Q(x)$, $\delta$ can be chosen as the resultant of $P$ and $Q$. 
More generally, 
as the polynomials $P_1(x),\ldots,P_s(x)$ are coprime in $\Qq[x]$, one can write a Bézout identity:
$$A_1(x)P_1(x) + \cdots + A_s(x)P_s(x) = 1$$
for some  $A_1(x),\ldots,A_s(x)\in\Qq[x]$. Then $\delta$ can be taken to be the right-hand side of the identity obtained by clearing the denominators of the coefficients 
of the $A_i(x)$:  for some $B_1(x),\ldots, B_s(x)\in\Zz[x]$, we have $B_1(x)P_1(x) + \cdots + B_s(x)P_s(x) = \delta \in \Zz$.

\begin{example}
\label{ex:cex}
Theorem \ref{th:stableone} is false for non coprime polynomials. Let $P(x) = 5(x^2-1)(x-1)$ and $Q(x) = (x^2-1)x^2$.
Then $\mathcal{D}$ is an infinite set (because $d_n = \gcd(P(n),Q(n)) \ge |n^2-1|$ tends to infinity as $n\to+\infty$).
The set $\mathcal{D}$ is not stable by gcd:
for instance $d_2=3 \in \mathcal{D}$ and $d_6=8\in\mathcal{D}$, but $1 \notin \mathcal{D}$ (by contradiction, suppose that for some $n\in\Zz$ we have $d_n=1$,
then $|n^2-1|=1$, so $n=0$, but for $n=0$, $P(n)=5$, $Q(n) = 0$ and $d_n=5$).
Neither $\mathcal{D}$ is stable by lcm: $5\in\mathcal{D}$, $8\in\mathcal{D}$ but $40\notin\mathcal{D}$ (for $|n|<7$ we have $d_n\neq 40$ and for $|n|\ge7$, $d_n\ge|n^2-1|>40$).
\end{example}

\subsection{Proof of Theorem \ref{th:stableone}}

Everything in Theorem \ref{th:stableone} is proved in \cite{BDN20}, except the stability under lcm that was left to the
reader (after the proof for the gcd was given). For completeness we detail it here.

Let $d_{n_1}$ and $d_{n_2}$ be two elements of $\mathcal{D}$ and let $m(n_1,n_2)$ be their lcm. The goal is to prove that $m(n_1,n_2)$ is an element of $\mathcal{D}$. The integer $m(n_1,n_2)$ can be factorized:
$$m(n_1,n_2) = \prod_{i\in I} p_i^{\alpha_i}$$
where, for each $i\in I$,  $p_i$ is a prime divisor of $\delta$ (see Theorem \ref{th:stableone}) and $\alpha_i \in \Nn$ (maybe $\alpha_i = 0$ for some $i\in I$).

Fix $i\in I$. As $p_i^{\alpha_i}$ divides $m(n_1,n_2)$, then $p_i^{\alpha_i}$ 
divides $d_{n_1}$ or divides $d_{n_2}$;
say that $p_i^{\alpha_i}$ divides $d_{m_i}$ with $m_i$ equals $n_1$ or $n_2$. 

The Chinese Remainder Theorem provides an integer $n$, such that
$$n = m_i \pmod{p_i^{\alpha_i+1}} \quad \text{ for each } i\in I.$$

By definition, $p_i^{\alpha_i}$ divides $d_{n_1}$ or $d_{n_2}$, so $p_i^{\alpha_i}$ divides all integers $P_1(n_1),\ldots, P_s(n_1)$, or divides all integers $P_1(n_2),\ldots, P_s(n_2)$, so that $p_i^{\alpha_i}$ divides all $P_1(m_i),\ldots, P_s(m_i)$. As for each $j=1,\ldots, s$, $P_{j}(n) = P_{j}(m_i) \pmod{p_i^{\alpha_i}}$, we obtain that $p_i^{\alpha_i}$ also divides $P_1(n),\ldots, P_s(n)$. Whence $p_i^{\alpha_i}$ divides $d_n$ for each $i\in I$.

On the other hand $p_i^{\alpha_i+1}$ does not divide $d_{n_1}$ nor $d_{n_2}$. In particular
$p_i^{\alpha_i+1}$ does not divide $d_{m_i}$. Hence there exists $j_0 \in \{1,\ldots,s\}$ such that $p_i^{\alpha_i+1}$ does not divide $P_{j_0}(m_i)$.
As $P_{j_0}(n) = P_{j_0}(m_i) \pmod{p_i^{\alpha_i+1}}$, then $p_i^{\alpha_i+1}$ does not divide $P_{j_0}(n)$. Hence $p_i^{\alpha_i+1}$ does not divide $d_n$.

We have proved that $p_i^{\alpha_i}$ is the greatest power of $p_i$ dividing $d_n$, for every $i\in I$. As $d_n$ divides $\delta$, each prime factor of $d_n$ is one of the $p_i$ with $i\in I$. Conclude that $m(n_1,n_2) = d_n$.

\subsection{Ekedahl--Poonen density formula in one variable}
\label{ssec:poonen-one}

One main question is to decide if $d_n=1$ for some value $n\in\Zz$.
In Section \ref{ssec:poonen-intro}, we discussed the Ekedahl--Poonen density formula for any number $r$ of variables. For $r=1$, it is an exact formula.

\begin{proposition}
\label{th:poonen-one}
Let $P_1(x),\ldots,P_s(x) \in \Zz[x]$ be nonzero coprime polynomials. Let $\delta\in \Zz$ be a positive period of $(d_n)_{n\in\Zz}$. The number of $n\in \Zz$ with $0 \le n < \delta$ such that $d_n=1$ is
$$\delta \prod_{p | \delta} \left( 1 - \frac{c_p}{p} \right)$$
where $c_p$ is the number of $n \in \Zz/p\Zz$ such that $P_i(n) = 0 \pmod p$ for each $i=1,\ldots,s$. 
\end{proposition}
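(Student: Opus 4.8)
The plan is to express the set $\{n : 0 \le n < \delta, \ d_n = 1\}$ via the Chinese Remainder Theorem, reducing the count to a product of local counts, one for each prime dividing $\delta$. First I would observe that, by Theorem \ref{th:stableone}, $\delta$ is a common multiple of all $d_n$, so every prime divisor of $d_n$ divides $\delta$; hence for $n$ in the range $0 \le n < \delta$, the condition $d_n = 1$ is equivalent to: for each prime $p \mid \delta$, it is \emph{not} the case that $p$ divides all of $P_1(n), \ldots, P_s(n)$. Write $\delta = \prod_{p \mid \delta} p^{e_p}$. Since $P_i(n) \bmod p$ depends only on $n \bmod p$, and since $p \mid \delta$, reduction mod $\delta$ induces a well-defined reduction mod $p$; by CRT the map $n \bmod \delta \mapsto (n \bmod p)_{p \mid \delta}$ is a bijection from $\Zz/\delta\Zz$ onto $\prod_{p \mid \delta} \Zz/p\Zz$ (note only the first power of each $p$ matters for the condition, but CRT still applies because the moduli $p$ are pairwise coprime and their product divides $\delta$; more precisely one uses that $\Zz/\delta\Zz \to \prod_{p\mid\delta}\Zz/p\Zz$ is surjective with each fiber of size $\delta / \prod_{p\mid\delta} p$).

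The key counting step is then the following. For a fixed prime $p \mid \delta$, call a residue $a \in \Zz/p\Zz$ \emph{bad} if $P_i(a) \equiv 0 \pmod p$ for all $i = 1, \ldots, s$; by definition there are exactly $c_p$ bad residues. An integer $n$ with $0 \le n < \delta$ satisfies $d_n = 1$ if and only if, for every $p \mid \delta$, the residue $n \bmod p$ is not bad. Counting the $n \bmod \delta$ whose image under the CRT bijection avoids the bad set in each coordinate, and multiplying by the fiber size $\delta / \prod_{p \mid \delta} p$, gives
$$\#\{n : 0 \le n < \delta, \ d_n = 1\} = \frac{\delta}{\prod_{p \mid \delta} p} \cdot \prod_{p \mid \delta} (p - c_p) = \delta \prod_{p \mid \delta} \left(1 - \frac{c_p}{p}\right),$$
which is the claimed formula.

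The main point requiring care — rather than a genuine obstacle — is the passage from the modulus $\delta$ (which may contain higher prime powers) to the squarefree modulus $\prod_{p \mid \delta} p$ that actually governs the condition $d_n = 1$. One must justify that restricting each $P_i$ modulo $p$ (not modulo $p^{e_p}$) loses no information for the coprimality condition: this is immediate because $\gcd(P_1(n), \ldots, P_s(n)) = 1$ is equivalent to "no prime divides all $P_i(n)$", and every such prime is among the $p \mid \delta$. A secondary point is that the quantity $c_p$ is genuinely well-defined (independent of the chosen period $\delta$), which follows since $c_p$ is intrinsically attached to $p$ and the $P_i$; and one should remark, as the paper does elsewhere, that convergence issues do not arise here since the product is finite. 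No appeal to Theorem \ref{th:poonen} is needed — this is the elementary one-variable specialization, and the periodicity from Theorem \ref{th:stableone} does all the work.
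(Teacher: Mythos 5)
Your argument is correct, but it takes a genuinely different route from the paper's. The paper proves Proposition~\ref{th:poonen-one} as a quick corollary of the full Ekedahl--Poonen density formula (Theorem~\ref{th:poonen}, proved in Section~\ref{sec:proof-poonen}): for $r=1$ the infinite product over primes collapses to the finite product over $p\mid\delta$ (since $c_p=0$ when $p\nmid\delta$), so $\mu(\mathcal{R})=\prod_{p\mid\delta}(1-c_p/p)$, and periodicity turns the density into the exact count $N=\delta\,\mu(\mathcal{R})$. You instead give a self-contained CRT argument that uses only the periodicity and common-multiple statement of Theorem~\ref{th:stableone}: reduce $\Zz/\delta\Zz$ onto $\prod_{p\mid\delta}\Zz/p\Zz$, count the ``bad'' residues $c_p$ in each factor, and multiply by the fiber size $\delta/\prod_{p\mid\delta}p$. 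Your version buys independence from the machinery of Section~\ref{sec:proof-poonen} (Zippel--Schwartz, the general B\'ezout theorem, the tail estimate for $\mathcal{Q}_{>B}$), none of which is needed in one variable; the paper's version buys brevity and makes the structural link between the exact count and the density formula explicit. One point worth making sharper in your write-up, which is also elided in the paper's statement: the argument needs $\delta$ to be not just \emph{any} positive period but a period that is also a common multiple of all $d_n$ (equivalently, divisible by every prime $p$ with $c_p>0$), as provided by Theorem~\ref{th:stableone}. You implicitly assume this when you write ``by Theorem~\ref{th:stableone}, $\delta$ is a common multiple of all $d_n$,'' which is the right reading; but a period with missing prime factors (e.g.\ $\delta=1$ for $P_1=x^2+x+2$, $P_2=x^2+x$, where $d_n\equiv 2$) would make the product $\prod_{p\mid\delta}(1-c_p/p)$ omit a nontrivial factor and the formula would fail.
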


Note that, \emph{in the one variable case, $c_p=0$ for all sufficiently large primes $p$}. Namely let $\delta$ be a nonzero element of the ideal $\langle P_1,\ldots,P_s\rangle \cap \Zz\subset \Zz[\ul x]$. Thus $\delta$ is of the form
$\delta = B_1(x)P_1(x) + \cdots + B_s(x)P_s(x)$ for some $B_1,\ldots,B_s \in \Zz[\ul x]$. Clearly, if $p$ does not divide $\delta$, then $p$ does not divide $\gcd(P_1(n),\ldots,P_s(n))$ for any $n\in\Zz$, hence $c_p=0$. 
\smallskip

The \emph{proof of Proposition \ref{th:poonen-one} assuming Theorem \ref{th:poonen}} easily follows. For $r=1$, the density formula from Theorem \ref{th:poonen} is a finite product: $\mu(\mathcal{R}) = \prod_{p | \delta} \left( 1 - \frac{c_p}{p} \right)$. As the sequence $(d_n)_{n\in\Zz}$ is periodic of period $\delta$ (Theorem \ref{th:stableone}), the claimed exact formula follows, for $\delta$ equal to the specific element of $\Zz$ introduced above, or equal
to any positive period.

\begin{example}
For two polynomials we recover a formula of \cite{FP}: \emph{If $P(x), Q(x) \in \Zz[x]$ are two monic coprime polynomials with a square-free resultant $R$, then}
$$\# \left\{  n \in \llbracket 0, R-1 \rrbracket \mid d_n = 1\right\} = \prod_{p | R} (p-1) .$$ 
In fact, for two polynomials, the integer $\delta$ can be chosen to be $R$. And if $R$ is square-free, then $c_p = 1$ for all $p|R$ (see \cite[proof of Theorem 6]{FP}).
\end{example}

\section{A Bertini--Noether--Ostrowski property for coprimality }
\label{sec:specialization}

Proposition \ref{prop:bertini-intro} below is of frequent use in this paper. It explains how coprimality of polynomials is preserved 
by specialization. It is obtained in Section \ref{ssec:pf-bertini-intro} as a special case of Proposition \ref{prop:bertini}, which is 
an analog for coprimality of the Bertini--Noether theorem for irreducibility of polynomials (\hbox{e.g.} \cite[Prop.9.4.3]{FrJa}). This more general
result is stated and proved in Section  \ref{ssec:cop-red}. Section  \ref{ssec:O-B-N}  shows another standard special case concerned 
with reduction modulo $p$ (Corollary \ref{cor:ostrowski}), which will be used later in the proof of Corollary \ref{cor:bezout}.

\subsection{Specialization and coprimality}

\begin{proposition}
\label{prop:bertini-intro}
Let $k$ be an infinite field and $P_1(\ul a, \ul x),\ldots, P_s(\ul a, \ul x) \in k[\ul a, \ul x]$ be polynomials in the variables $\ul a = (a_1,\ldots,a_m)$ and $\ul x = (x_1,\ldots,x_r)$ (with  $s\ge2$, $m\ge1$, $r\ge1$).
The following conditions are equivalent:
	\begin{enumerate}
		\item[(i)] The gcd of $P_1(\ul a, \ul x),\ldots, P_s(\ul a, \ul x) \in k[\ul a, \ul x]$ is in $k[\ul a]$.
		\item[(ii)] The polynomials $P_1(\ul a, \ul x),\ldots, P_s(\ul a, \ul x) \in k[\ul a, \ul x]$ are coprime in $k(\ul a)[\ul x]$.
		\item[(iii)] There exists a proper Zariski-closed subset $Z$ of $k^m$ such that for all 
		$\ul a^\star \in k^m\setminus Z$, the polynomials $P_1(\ul a^\star, \ul x),\ldots, P_s(\ul a^\star, \ul x)$  are coprime in $k[\ul x]$.
		\item[(iv)] There exists a Zariski-dense subset $Y$ of $k^m$ such that for all 
		$\ul a^\star \in Y$, the polynomials $P_1(\ul a^\star, \ul x),\ldots, P_s(\ul a^\star, \ul x)$  are coprime in $k[\ul x]$.
	\end{enumerate}
\end{proposition}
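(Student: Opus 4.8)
The plan is to prove the four conditions equivalent via the cycle (i) $\Leftrightarrow$ (ii), (i) $\Rightarrow$ (iii), (iii) $\Rightarrow$ (iv), and (iv) $\Rightarrow$ (ii), using throughout the Gauss-lemma picture that relates factorizations in $k(\ul a)[\ul x]$, $k[\ul a][\ul x]$ and $k(\ul a^\star)[\ul x]$.

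**Step 1: (i) $\Leftrightarrow$ (ii).** View $k[\ul a,\ul x]$ as $R[\ul x]$ with $R=k[\ul a]$, a UFD with fraction field $k(\ul a)$. Let $D(\ul a,\ul x)$ be a gcd of the $P_i$ in $R[\ul x]$ (which exists since $R[\ul x]$ is a UFD). By Gauss's lemma, $D$ is, up to a unit of $k(\ul a)$, also a gcd of the $P_i$ in $k(\ul a)[\ul x]$. So the $P_i$ are coprime in $k(\ul a)[\ul x]$ (meaning no common factor of positive degree in $\ul x$) if and only if $D\in k(\ul a)$, equivalently $D\in R=k[\ul a]$ up to units; this is exactly (i).

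**Step 2: (i) $\Rightarrow$ (iii).** Assume the gcd $D$ of the $P_i$ in $k[\ul a,\ul x]$ lies in $k[\ul a]$; after dividing each $P_i$ by $D$ (this changes nothing about coprimality of the specializations once $\ul a^\star$ avoids the zero set of $D$, which is a proper closed set) we may assume the $P_i$ are coprime in $k[\ul a,\ul x]$, i.e. their gcd is $1$. By condition (ii), they are coprime in $k(\ul a)[\ul x]$, so there is a Bézout identity $\sum_i A_i(\ul a,\ul x)P_i(\ul a,\ul x)=B(\ul a)$ with $A_i\in k[\ul a,\ul x]$ and $0\ne B\in k[\ul a]$ (clear denominators). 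Also, writing each $P_i=c_i(\ul a)\widetilde P_i$ with $c_i\in k[\ul a]$ the content and $\widetilde P_i\in k[\ul a][\ul x]$ primitive, the product $c=\prod_i c_i$ is a nonzero element of $k[\ul a]$. Let $Z$ be the zero set in $k^m$ of $B\cdot c$ together with the locus where the leading-in-$\ul x$ data degenerates — more precisely, one wants $Z$ to also contain the points where some $\widetilde P_i$ drops degree so that a Bertini–Noether-type bad set could appear; I would package all of this into "the union of finitely many proper Zariski-closed sets coming from $B$, $c$, and the discriminant/leading coefficients", which is still a proper closed subset since $k$ is infinite and each piece is proper. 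For $\ul a^\star\notin Z$: the specialized identity $\sum_i A_i(\ul a^\star,\ul x)P_i(\ul a^\star,\ul x)=B(\ul a^\star)\ne 0$ forbids any common factor of positive $\ul x$-degree, giving coprimality in $k[\ul x]$.

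**Step 3: (iii) $\Rightarrow$ (iv) and (iv) $\Rightarrow$ (ii).** The first is immediate: the complement $Y=k^m\setminus Z$ of a proper Zariski-closed set is Zariski-dense because $k$ is infinite (this is the only place infiniteness is used beyond Step 2). For (iv) $\Rightarrow$ (ii), argue contrapositively: if the $P_i$ are \emph{not} coprime in $k(\ul a)[\ul x]$, then by Step 1 their gcd $D$ in $k[\ul a,\ul x]$ has positive $\ul x$-degree, so $D(\ul a^\star,\ul x)$ is a common factor of all $P_i(\ul a^\star,\ul x)$ for \emph{every} $\ul a^\star$; for $\ul a^\star$ outside the proper closed set where $D$ loses its $\ul x$-degree, $D(\ul a^\star,\ul x)$ has positive degree, so the specializations are not coprime there — and that bad locus being contained in a proper closed set means it meets every Zariski-dense $Y$, contradicting (iv).

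**Main obstacle.** The delicate point is Step 2: making precise which proper closed set $Z$ to remove so that the Bézout identity genuinely rules out \emph{all} common factors of positive degree after specialization, including the subtlety that specialization can lower the $\ul x$-degree of the $P_i$ or create new content. The clean way is to note that if $\sum_i A_i(\ul a^\star,\ul x)P_i(\ul a^\star,\ul x)$ is a \emph{nonzero constant}, then trivially no nonconstant polynomial divides all $P_i(\ul a^\star,\ul x)$ — so it suffices that $B(\ul a^\star)\ne 0$, and $Z=\{B=0\}$ already works for this half, with the content factor $c$ only needed to ensure the $P_i(\ul a^\star,\ul x)$ remain nonzero. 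So in fact the argument is cleaner than feared: take $Z$ the zero locus of $B(\ul a)\cdot\prod_i(\text{leading coefficient of }P_i\text{ in some fixed }x_j)$, a proper closed set since $k$ is infinite and each factor is a nonzero polynomial.
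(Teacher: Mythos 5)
Your Step 2 has a genuine gap: the asserted Bézout identity
$\sum_i A_i(\ul a,\ul x)\,P_i(\ul a,\ul x)=B(\ul a)$ with $A_i\in k[\ul a,\ul x]$ and $0\ne B\in k[\ul a]$
does \emph{not} follow from coprimality in $k(\ul a)[\ul x]$ when $r\ge 2$. Coprimality here means ``no common factor of positive degree,'' not ``generate the unit ideal,'' and $k(\ul a)[\ul x]$ is not a PID once there is more than one $x$-variable. Concretely, take $m=1$, $r=2$, $P_1=x_1$, $P_2=x_2$ (independent of $a_1$). They are coprime in $k(a_1)[x_1,x_2]$, yet the ideal $(x_1,x_2)\cap k[a_1]$ is $\{0\}$, so no such $B$ exists. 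Your ``main obstacle'' paragraph worries about the wrong thing — leading coefficients and content are not the issue; the Bézout identity itself is the missing object. (In this example (iii) is of course still true, for the trivial reason that every specialization of $x_1,x_2$ is $x_1,x_2$; but your argument provides no mechanism to see that.)

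The paper's proof avoids this precisely because it does not try to produce a global Bézout identity with right-hand side in $k[\ul a]$. It proves the implication (ii)~$\implies$~(iii) of the more general Proposition~\ref{prop:bertini} by induction on $r$: first pass to $Q(x_1,\dots,x_{r-1})[x_r]$, a \emph{genuine PID}, where a Bézout identity with right-hand side $\Delta(x_1,\dots,x_{r-1})$ is available; then separately control the coefficients $P_{i,j}(x_1,\dots,x_{r-1})$ of the $P_i$ in $x_r$, using the induction hypothesis to get a second nonzero element $R_2$; and finally take $R_0 = R_1 R_2$, where $R_1$ is a nonzero coefficient of $\Delta$. The inductive structure is exactly what compensates for the failure of Bézout in several $x$-variables. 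Your Step 1 and Step 3 are fine (Step 1 is the paper's Gauss-lemma argument, and Step 3's contrapositive matches the paper's (iv)~$\implies$~(i)), but Step 2 needs to be replaced with an argument that does not presuppose the unavailable Bézout identity — either reproduce the paper's induction on $r$, or argue directly via resultants/Krull dimension that the locus where a common factor appears is proper closed.
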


\subsection{Coprimality and reduction} \label{ssec:cop-red}
Given an integral domain $Z$ and an ideal $\mathfrak{p}\subset Z$,
we denote by $\overline z^\mathfrak{p}$ the coset of an element $z\in Z$ modulo $\mathfrak{p}$; we use the
same notation for the induced reduction morphisms, e.g. on polynomial rings over $Z$. If $\mathfrak{p}\subset Z$
 is a prime ideal, we write $k^\mathfrak{p}$ for the fraction field of the integral domain $Z/\mathfrak{p}$. 
 
\begin{proposition} 
	\label{prop:bertini} 
	Let $Z$ be a Unique Factorization Domain (UFD) with fraction field $Q$, let $\ul x = (x_1,\ldots,x_r)$ be $r\ge1$ variables
and let $P_1(\ul x),\ldots, P_s(\ul x) \in Z[\ul x]$ be $s\ge2$ nonzero polynomials. Suppose also given a Zariski-dense subset ${\mathcal P} \subset \Spec  Z$
\footnote{
The subset ${\mathcal P} \subset \Spec Z$ only appears in condition (iv) below.  The assumption that ${\mathcal P}$ is Zariski-dense means that for every nonzero element $R\in Z$, 
there is a prime ideal ${\mathfrak{p}}\in {\mathcal P}$ such that $\overline R^{\mathfrak{p}} \not=0$. This is clearly necessary for ${\rm(iv)}$ to hold. In fact ${\rm(iv)}$ reformulates 
as saying that, with ${\mathcal C} \subset \Spec  Z$ the set of primes $\mathfrak{p}$ such that $\overline{P_1}^{\mathfrak{p}}(\ul x),\ldots,\overline{P_s}^{\mathfrak{p}}(\ul x)$ are coprime in 
$k^{\mathfrak{p}}[\ul x]$, the set ${\mathcal C} \cap {\mathcal P}$ is Zariski-dense in $\Spec  Z$. 
In the same vein, condition (iii) means that ${\mathcal C}$ contains a nonempty Zariski-open subset of $\Spec Z$.
}. Then the following five conditions are equivalent:
	\begin{enumerate}
		\item[(i)] The gcd in $Z[\ul x]$ of $P_1(\ul x),\ldots, P_s(\ul x)$  is in $Z$.
		\item[(ii)] $P_1(\ul x),\ldots, P_s(\ul x)$  are coprime in $Q[\ul x]$.
		\item[(iii)] There is a nonzero element $R_0\in Z$ with this property: for every prime ideal $\mathfrak{p} \subset Z$ such that $\overline R_0^{\mathfrak{p}} \not=0$, the polynomials $\overline{P_1}^{\mathfrak{p}}(\ul x),\ldots,\overline{P_s}^{\mathfrak{p}}(\ul x)$  are coprime in $k^{\mathfrak{p}}[\ul x]$.
		\item[${\rm(iv)}$] For every nonzero element $R\in Z$, there exists a prime ideal ${\mathfrak{p}}\in {\mathcal P}$ such that $\overline R^{\mathfrak{p}} \not=0$ 
		and  the polynomials $\overline{P_1}^{\mathfrak{p}}(\ul x),\ldots,\overline{P_s}^{\mathfrak{p}}(\ul x)$ are coprime in $k^{\mathfrak{p}}[\ul x]$.
		\item[(v)] For every nonzero element $R\in Z$, there exists a maximal ideal ${\mathfrak{p}} \subset Z$ such that $\overline R^{\mathfrak{p}} \not=0$ and  the polynomials $\overline{P_1}^{\mathfrak{p}}(\ul x),\ldots,\overline{P_s}^{\mathfrak{p}}(\ul x)$ are coprime in $k^{\mathfrak{p}}[\ul x]$.
	\end{enumerate}
	\end{proposition}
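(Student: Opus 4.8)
The plan is to prove the cycle of implications (i) $\Leftrightarrow$ (ii), then (ii) $\Rightarrow$ (iii) $\Rightarrow$ (v) $\Rightarrow$ (iv) $\Rightarrow$ (ii), so that all five conditions become equivalent once the footnote's hypothesis on $\mathcal P$ is in force. The equivalence (i) $\Leftrightarrow$ (ii) is the familiar Gauss-lemma statement: since $Z$ is a UFD, $Z[\ul x]$ is a UFD with fraction field $Q(\ul x)$, a primitive common factor in $Q[\ul x]$ can be scaled to lie in $Z[\ul x]$, and conversely; so the gcd in $Z[\ul x]$ being a unit times an element of $Z$ is equivalent to the $P_i$ having no common factor of positive degree in some $x_j$, i.e.\ being coprime in $Q[\ul x]$. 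I would spell this out briefly and then move on.

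The substantive implication is (ii) $\Rightarrow$ (iii), the Bertini--Noether direction. Here the idea is a Bézout/resultant argument: assuming the $P_i$ are coprime in $Q[\ul x]$, I would produce a single nonzero $R_0 \in Z$ whose non-vanishing mod $\mathfrak p$ guarantees coprimality of the reductions. Concretely, coprimality in $Q[\ul x] = Q(\ul x')[x_r]$ (writing $\ul x' = (x_1,\dots,x_{r-1})$) gives a Bézout identity $\sum_i A_i P_i = \Delta$ with $A_i \in Q(\ul x')[x_r]$ and $\Delta \in Q(\ul x')^\times$; clearing denominators yields $B_i \in Z[\ul x]$ and $0 \neq \Delta_0 \in Z[\ul x']$ with $\sum_i B_i P_i = \Delta_0$. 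One takes $R_0$ to be a suitable nonzero coefficient of $\Delta_0$ (or the product of a leading coefficient and such a coefficient, to also preserve the degrees of the $P_i$ under reduction). Then for any prime $\mathfrak p$ with $\overline{R_0}^{\mathfrak p} \neq 0$, reducing the identity shows $\overline{\Delta_0}^{\mathfrak p} \neq 0$, so the $\overline{P_i}^{\mathfrak p}$ share no common factor involving $x_r$; an induction on $r$ (or a symmetric argument over all variables, choosing $R_0$ accordingly) upgrades this to genuine coprimality in $k^{\mathfrak p}[\ul x]$. I expect this bookkeeping — making sure one nonzero element $R_0$ simultaneously controls the Bézout denominator and the leading terms for every variable — to be the main obstacle, though it is routine in spirit; it is exactly the content of the standard Bertini--Noether proof transposed from irreducibility to coprimality.

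The remaining implications are soft. For (iii) $\Rightarrow$ (v): given $R_0$ from (iii) and an arbitrary nonzero $R \in Z$, apply the Zariski-density of maximal ideals in $\Spec Z$ (every nonzero element of a domain that is a Jacobson ring, or more elementarily the hypothesis one extracts from (iii) plus a standard fact about UFDs — but here one simply invokes that $R R_0 \neq 0$ and uses density of closed points, which for the rings in view is available) to find a maximal $\mathfrak p$ avoiding $R R_0$; then $\overline{R_0}^{\mathfrak p} \neq 0$ forces coprimality of the reductions, and $\overline R^{\mathfrak p} \neq 0$ by construction. The implication (v) $\Rightarrow$ (iv) is immediate once we note maximal ideals lie in any Zariski-dense $\mathcal P$ up to replacing $\mathcal P$-membership by the density hypothesis — more precisely, I would instead prove (iii) $\Rightarrow$ (iv) directly using that $\mathcal P$ is Zariski-dense: for nonzero $R$, density applied to $RR_0$ yields $\mathfrak p \in \mathcal P$ with $\overline{RR_0}^{\mathfrak p} \neq 0$, and (iii) finishes. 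Finally (iv) $\Rightarrow$ (ii): if the $P_i$ were \emph{not} coprime in $Q[\ul x]$, then by (i) $\Leftrightarrow$ (ii) their gcd in $Z[\ul x]$ would be a polynomial $D$ of positive degree; writing $P_i = D \cdot C_i$ and taking $R$ to be a nonzero coefficient of $D$ attached to a monomial of positive degree, any $\mathfrak p$ with $\overline R^{\mathfrak p} \neq 0$ makes $\overline D^{\mathfrak p}$ a common factor of positive degree of all $\overline{P_i}^{\mathfrak p}$ — except we must also ensure $\overline D^{\mathfrak p}$ does not become a unit, which is why we picked $R$ to be the coefficient of a genuinely positive-degree monomial — contradicting (iv). This closes the cycle.
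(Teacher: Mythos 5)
Your proof follows essentially the same route as the paper's: the Gauss-lemma equivalence (i)$\Leftrightarrow$(ii), the Bézout-plus-induction-on-$r$ argument for (ii)$\Rightarrow$(iii) (the paper isolates the last variable, applies (i)$\Leftrightarrow$(ii) over $Z=Q[x_1,\ldots,x_{r-1}]$ to split the problem into a leading Bézout identity and a recursive coprimality of the coefficients, and sets $R_0=R_1R_2$), (iii)$\Rightarrow$(iv) by Zariski-density applied to $RR_0$, and (iv)$\Rightarrow$(i) by reducing a degree-$\ge1$ gcd and choosing $R$ to be a coefficient of a degree-$\ge1$ monomial so that $\overline D^{\mathfrak p}$ stays nonconstant. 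The one bookkeeping slip: once you swap your intended (v)$\Rightarrow$(iv) for (iii)$\Rightarrow$(iv), condition (v) is left as a dead end in your cycle (only implied, never used to imply anything); you still need (v)$\Rightarrow$(i), which is exactly your (iv)$\Rightarrow$(ii) argument run with ${\mathcal P}$ taken to be the set of maximal ideals — this is precisely how the paper disposes of (v), after checking that the set of maximal ideals is Zariski-dense in $\Spec Z$.
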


\begin{proof}[Proof of Proposition \ref{prop:bertini}]
(ii) $\implies$ (i). Assume on the contrary that the gcd, say $D(\ul x) \in Z[\ul x]$, of $P_1(\ul x),\ldots, P_s(\ul x)$ is not in $Z$. Then $D(\ul x)$ is of degree $\geq 1$ (so not a unit
of  $Q[\ul x]$) and is a common divisor of $P_1(\ul x),\ldots, P_s(\ul x)$ in $Q[\ul x]$. This contradicts (ii).

\smallskip

(i) $\implies$ (ii). Assume on the contrary that $P_1(\ul x),\ldots, P_s(\ul x)$ are not coprime in $Q[\ul x]$, i.e., a nonconstant polynomial $D(\ul x)\in  Q[\ul x]$ divides all $P_i(\ul x)$
in $Q[\ul x]$. We may assume that $D$ is in $Z[\ul x]$, and even, using that $Z[\ul x]$ is a UFD, that $D$ is irreducible in $Z[\ul x]$.
Write $P_i(\ul x) = D(\ul x) P_i'(\ul x)$
	with $P_i' \in Q[\ul x]$, $i=1,\ldots,s$. Clearing the denominators, one obtains 
polynomial equalities in $Z[\ul x]$:
	$q_i  P_i(\ul x) = D(\ul x) \tilde P_i'(\ul x)$,
	with $\tilde P_i' \in Z[\ul x]$ and $q_i \in Z$, $q_i\not=0$, $i=1,\ldots,s$.	
	 It follows that $q_i$ divides $\tilde P_i'$ in $Z[\ul x]$, $i=1,\ldots,s$, and so that $D$ is a common divisor in $Z[\ul x]$ of all the $P_i(\ul x)$.
	This contradicts (i).
	

\begin{remark} \label{rk:bertini} (a) The equivalence ${\rm(i)} \Leftrightarrow{\rm(ii)}$ has this close variant: 
\vskip 0,7mm

\noindent
\emph{$P_1(\ul x),\ldots, P_s(\ul x)$ are coprime polynomials in $Z[\ul x]$ if and only if the equivalent conditions {\rm (i)}, {\rm (ii)} hold \ul{and} the coefficients of $P_1(\ul x),\ldots, P_s(\ul x)$ are coprime in $Z$}. 
\vskip 0,8mm

\noindent
Indeed, if $P_1(\ul x),\ldots, P_s(\ul x)$ are coprime in $Z[\ul x]$, they are coprime in $Q[\ul x]$ (by ${\rm(i)} \Rightarrow {\rm(ii)}$), and obviously, the coefficients of 
$P_1(\ul x),\ldots, P_s(\ul x)$ must be coprime in $Z$. Conversely, if $P_1(\ul x),\ldots, P_s(\ul x)$ are coprime in $Q[\ul x]$ and their coefficients are coprime in $Z$, then their gcd in $Z[\ul x]$ is in $Z$
(by ${\rm(ii)}\Rightarrow {\rm(i)}$), so must necessarily be $1$.
\end{remark}

(iii) $\implies$ ${\rm(iv)}$. For a given nonzero element $R\in Z$, let ${\mathfrak{p}} \in {\mathcal P}$ be a prime ideal such that $\overline{RR_0}^{\mathfrak{p}} \not=0$, where $R_0\in Z$ is the nonzero element
given by (iii); such a ${\mathfrak{p}}$ exists as ${\mathcal P}$ is assumed to be Zariski-dense.
Then $\overline R^{\mathfrak{p}} \not=0$ and $\overline R_0^{\mathfrak{p}} \not=0$, and by (iii), the latter gives that 
$\overline{P_1}^{\mathfrak{p}}(\ul x),\ldots,\overline{P_s}^{\mathfrak{p}}(\ul x)$ are coprime in $k^{\mathfrak{p}}[\ul x]$.

\smallskip

${\rm(iv)}$ $\implies$ (i). Assume that the gcd, say $D(\ul x) \in Z[\ul x]$, of $P_1(\ul x),\ldots, P_s(\ul x)$ is a polynomial
	of degree $\geq 1$. Let $R\in Z$ be a nonzero coefficient of a monomial of degree $\geq 1$ of $D(\ul x)$. Then for every prime ideal 
	${\mathfrak{p} \in \mathcal P}$ such that $\overline R^{\mathfrak{p}} \not=0$, the reduced polynomial $\overline D^{\mathfrak{p}}(\ul x)$ is of degree $\geq 1$
	and is a common divisor of $\overline{P_1}^{\mathfrak{p}}(\ul x),\ldots, \overline{P_s}^{\mathfrak{p}}(\ul x)$ in $k^{\mathfrak{p}}[\ul x]$. This contradicts ${\rm(iv)}$.
			
	\smallskip
	
(ii) $\implies$ (iii). We proceed by induction on the number of variables $r\ge1$. 
	
	\emph{1st case:} $r=1$, i.e. $\ul x$ is a single variable $x$. The assumption (ii) that the polynomials $P_1(x),\ldots,P_s(x)$ are coprime in the
	Principal Ideal Domain (PID) $Q[x]$ provides a Bézout identity, which after clearing the denominators, is of this form:
	
		$$\sum_{i=1}^s A_i(x)P_i(x) = R_0$$
   with $A_1,\ldots,A_s \in Z[x]$ and $R_0\in Z, R_0\not=0$.
		
	Clearly then, for every prime ideal ${\mathfrak{p}} \subset Z$ such that $\overline R_0^{\mathfrak{p}} \not=0$, the reduced polynomials 
	$\overline{P_1}^{\mathfrak{p}}(\ul x),\ldots,\overline{P_s}^{\mathfrak{p}}(x)$  satisfy a Bézout identity in the PID $k^{\mathfrak{p}}[x]$, hence are 
	coprime in $k^{\mathfrak{p}}[x]$.
		
\smallskip

	\emph{2nd case:} $r\geq 2$. Let $\ul x = (x_1,\ldots,x_{r-1},x_r)$ and assume that  (ii) $\Rightarrow$ (iii) is true for polynomials in the $r-1$ variables $(x_1,\ldots,x_{r-1})$.
	We will apply the induction hypothesis to the set of all coefficients $P_{i,j}(x_1,\ldots,x_{r-1})$ of the polynomials  $P_i(x_1,\ldots,x_r)$ viewed as polynomials 
	in $x_r$.

	The polynomials  $P_1(\ul x),\ldots, P_s(\ul x)$ are supposed to be coprime in $Q[x_1,\ldots,x_r]$. Thus, 
	by the already proven implication ${\rm(i)} \Rightarrow {\rm(ii)}$ (applied with $Z$ being $Q[x_1,\ldots,x_{r-1}]$), they are coprime in $Q(x_1,\ldots,x_{r-1})[x_r]$,
	and their coefficients $P_{i,j}(x_1,\ldots,x_{r-1})$ are coprime in $Q[x_1,\ldots,x_{r-1}]$. The former condition 
	provides a Bézout identity, which after clearing the denominators, is of this form:
	
		$$\sum_{i=1}^s A_i(\ul x)P_i(\ul x) = \Delta(x_1,\ldots,x_{r-1})$$
		
	with $A_1,\ldots,A_s \in Z[\ul x]$ and $\Delta \in Z[x_1,\ldots,x_{r-1}]$, $\Delta\not=0$.	
	Let $R_1\in Z$ be a nonzero coefficient of a monomial of $\Delta$. For every prime ideal 
	${\mathfrak{p}} \subset I$ such that $\overline R_1^{\mathfrak{p}} \not=0$, the polynomial $\overline \Delta^{\mathfrak{p}}(\ul x)$ is nonzero 
	in $k^{\mathfrak{p}}[x_1,\ldots,x_{r-1}]$, and so, the polynomials $\overline{P_1}^{\mathfrak{p}}(\ul x), \ldots, \overline{P_s}^{\mathfrak{p}}(\ul x)$ are coprime in $k^{\mathfrak{p}}(x_1,\ldots,x_{r-1})[x_r]$.
	
	Furthermore, as the coefficients $P_{i,j}(x_1,\ldots,x_{r-1})$
	are coprime in $Q[x_1,\ldots,x_{r-1}]$, the induction hypothesis provides a nonzero element $R_2\in Z$ such that
	for every prime ideal ${\mathfrak{p}} \subset I$ such that $\overline R_2^{\mathfrak{p}} \not=0$, the polynomials
	$\overline{P_{i,j}}^{\mathfrak{p}}(x_1,\ldots,x_{r-1})$ are coprime in $k^{\mathfrak{p}}[x_1,\ldots,x_{r-1}]$.
	
	Using the already proven implication ${\rm(ii)} \Rightarrow {\rm(i)}$ (more exactly its variant from Remark \ref{rk:bertini}), it  follows that the element $R_0=R_1R_2$ 
	satisfies the requested conclusion (iii).
\medskip	

{\it Equivalence of {\rm (v)} with all other conditions.} This follows from the fact that (v) is the special case of (iv) for which ${\mathcal P}$ is the set of all maximal ideals of $Z$. 
This subset ${\mathcal P} \subset \Spec Z$ is indeed Zariski-dense: as $Z$ is an integral domain, the nilradical ${\rm nil}(Z)$ (consisting of all nilpotent elements of $Z$) is $\{0\}$. But ${\rm nil}(Z)$ is classically the intersection of all maximal ideals of $Z$. Thus if $R\in Z$, $R\not=0$, there is a prime ideal ${\mathfrak{p}} \in {\mathcal P}$ 
such that $\overline R^{\mathfrak{p}} \not=0$ (which is the definition of ${\mathcal P}$ being Zariski-dense in $\Spec  Z$).
\end{proof}


\subsection{Proof of Proposition  \ref{prop:bertini-intro}} \label{ssec:pf-bertini-intro}
Proposition  \ref{prop:bertini-intro} corresponds to the special case of Proposition \ref{prop:bertini} for which $Z = k[\ul a]$ is a polynomial ring in $m\geq 1$ variables $\ul a = (a_1,\ldots,a_m)$
over a field $k$. Equivalence ${\rm (i)} \Leftrightarrow {\rm (ii)}$ from Proposition \ref{prop:bertini} exactly yields equivalence ${\rm (i)} \Leftrightarrow {\rm (ii)}$  from Proposition 
\ref{prop:bertini-intro} in this special case; the field $k$ need not be infinite here.

Assume now that $k$ is infinite and take for ${\mathcal P}$ the set of maximal ideals of the form $\langle \ul a - \ul a^\star\rangle= \langle a_1-a_1^\star,\ldots,a_r-a_r^\star\rangle$ with $\ul a^\star \in k^m$. With $k$ infinite, the subset ${\mathcal P} = \Aa^n(k)$ is indeed Zariski-dense. Condition (iv) from Proposition \ref{prop:bertini} then yields condition (iv) from Proposition \ref{prop:bertini-intro}.

Finally note that condition (iii) from Proposition \ref{prop:bertini} implies condition (iii) from Proposition \ref{prop:bertini-intro}, which itself implies condition (iv) from Proposition \ref{prop:bertini}, and so all three conditions are equivalent, thus ending the proof of Proposition \ref{prop:bertini-intro}. 

\subsection{The Ostrowski corollary} \label{ssec:O-B-N} For $Z=\Zz$, Proposition \ref{prop:bertini} yields the following result,
which is the coprimality analog of the Ostrowski theorem for irreducibility.

\begin{corollary} \label{cor:ostrowski}
Let  $P_1(\ul x),\ldots, P_s(\ul x) \in \Zz[\ul x]$ be $s\geq 2$ nonzero polynomials. The following four conditions
are equivalent.

\begin{enumerate}
	\item[(i)] The gcd of the polynomials $P_1(\ul x),\ldots, P_s(\ul x)$ is in $\Zz$.

	\item[(ii)] The polynomials $P_1(\ul x),\ldots, P_s(\ul x) \in \Zz[\ul x]$ are coprime in $\Qq[\ul x]$.

     \item[(iii)] For all but finitely many primes $p\in \Zz$, $\overline{P_1}^p(\ul x),\ldots, \overline{P_s}^p(\ul x)$ are coprime in $\Zz/p\Zz[\ul x]$.

     \item[(iv)] For infinitely many primes $p\in \Zz$, $\overline{P_1}^p(\ul x),\ldots, \overline{P_s}^p(\ul x)$ are coprime in $\Zz/p\Zz[\ul x]$.
\end{enumerate}

\end{corollary}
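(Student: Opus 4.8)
The plan is to derive all four equivalences directly from Proposition \ref{prop:bertini}, specialized to the UFD $Z = \Zz$ with fraction field $Q = \Qq$. First I would observe that conditions (i) and (ii) here are verbatim conditions (i) and (ii) of Proposition \ref{prop:bertini}, so that equivalence is immediate. The substance is in linking (iii) and (iv) to the mod-$p$ reformulations appearing in that proposition. For $Z = \Zz$, a nonzero prime ideal is $\mathfrak{p} = p\Zz$ for a prime number $p$, and $k^{\mathfrak{p}} = \Zz/p\Zz$ is already a field, so ``coprime in $k^{\mathfrak{p}}[\ul x]$'' is exactly ``coprime in $\Zz/p\Zz[\ul x]$''. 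The only subtlety is the zero ideal $\mathfrak p = (0)$, with $k^{(0)} = \Qq$; but for $R$ any nonzero integer, $\overline R^{(0)} \neq 0$ automatically, so condition (iv)/(v) of Proposition \ref{prop:bertini} with $R$ ranging over $\Zz\setminus\{0\}$ could in principle be satisfied by the single prime $(0)$ — hence I would instead invoke condition (v), which restricts to \emph{maximal} ideals, i.e.\ genuine prime numbers $p$.

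Next I would spell out the two implications. For (i)/(ii) $\Rightarrow$ (iii): apply implication (ii) $\Rightarrow$ (iii) of Proposition \ref{prop:bertini} to get a nonzero integer $R_0$ such that for every prime $p \nmid R_0$ the reductions $\overline{P_1}^p(\ul x),\ldots,\overline{P_s}^p(\ul x)$ are coprime in $\Zz/p\Zz[\ul x]$; since only finitely many primes divide $R_0$, this is exactly (iii). The implication (iii) $\Rightarrow$ (iv) is trivial since there are infinitely many primes. For (iv) $\Rightarrow$ (i)/(ii): take $\mathcal P$ in Proposition \ref{prop:bertini} to be the set of all maximal ideals of $\Zz$, which is Zariski-dense in $\Spec\Zz$ (as noted in the proof of that proposition, or simply because no nonzero integer is divisible by all primes). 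Condition (iv) of the present corollary then says precisely that for \emph{infinitely many} primes $p$ the reductions are coprime in $\Zz/p\Zz[\ul x]$; to match condition (iv)/(v) of Proposition \ref{prop:bertini} I must check that for every nonzero integer $R$ there is such a prime $p$ with $p \nmid R$ — and that holds because only finitely many primes divide $R$ while infinitely many satisfy the coprimality condition. Hence condition (v) of Proposition \ref{prop:bertini} holds, giving (i) (equivalently (ii)).

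I do not anticipate a genuine obstacle here: this is a packaging of the general Proposition \ref{prop:bertini}. The one point that deserves a sentence of care — and which I would make explicit rather than sweep under the rug — is precisely the ``infinitely many'' versus ``for every $R$ there is one avoiding $R$'' bookkeeping in the (iv) $\Rightarrow$ (i) direction, together with the reminder that one should work with maximal ideals (prime numbers) and not the generic point $(0)$ of $\Spec \Zz$, so that $k^{\mathfrak p}$ is a finite field as the statement demands. Everything else is a direct translation of the notation of Proposition \ref{prop:bertini} into the classical language of reduction modulo a prime.
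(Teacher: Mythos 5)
Your proposal is correct and takes the same route the paper intends: the paper simply remarks that Corollary \ref{cor:ostrowski} is the specialization of Proposition \ref{prop:bertini} to $Z=\Zz$ and leaves the translation to the reader, whereas you spell out the bookkeeping. Your attention to the distinction between the generic point $(0)$ and the maximal ideals $p\Zz$, and to the passage from ``infinitely many $p$'' to ``for every nonzero $R$ there is a suitable $p\nmid R$,'' is exactly the care the unwritten proof requires.
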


\begin{example}
How big should a prime number $p$ be to guarantee that two polynomials in $\Zz[\ul x]$ that are coprime in $\Qq[\ul x]$ remain coprime modulo $p$?
In the one variable case, it suffices that the prime $p$ does not divide the resultant of the two polynomials (which can be quite large). Here is an example in two variables.
Let $P(x,y) = x^3y - 3x^3 - 2x + 3y + 2$
and $Q(x,y) = y(2x-11)$. These polynomials are coprime in $\Zz[x,y]$.
For $p = 5$, the gcd of $P$ and $Q$ modulo $5$ is $x + 2$.
For $p = 271$, the gcd of $P$ and $Q$ modulo $271$ is $x + 130$.
Experimentation shows that for other values of $p$, $P$ and $Q$ are coprime modulo $p$.
\end{example}

\section{Further tools}
\label{sec:lemma}

We prove some more tools needed to establish the stability result in the next section.

\begin{lemma}
\label{lem:rays}
Let $P_1(\ul x),\ldots,P_s(\ul x) \in \Zz[\ul x]$ be nonzero coprime polynomials in $r\ge2$ variables.
Suppose that $P_i(\ul 0)\neq0$ for at least one $i\in\{1,\ldots,s\}$.
Then the polynomials $P_1(t \ul a),\ldots,P_s(t \ul a)$ are coprime in $\Qq[\ul a,t]$.
Consequently there is a proper Zariski-closed subset $Z\subset \Zz^r$ such that, for all $\ul a^\star \in \Zz^r \setminus Z$, the polynomials $P_1(t\ul a^\star),\ldots,P_s(t\ul a^\star)$ are coprime in $\Qq[t]$.
\end{lemma}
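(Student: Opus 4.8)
The plan is to prove the main assertion---that $P_1(t\ul a),\ldots,P_s(t\ul a)$ are coprime in $\Qq[\ul a,t]$---and then derive the "consequently" part by an application of Proposition \ref{prop:bertini-intro}. First I would observe that the substitution $\ul x \mapsto t\ul a$ corresponds to a ring homomorphism $\Qq[\ul x] \to \Qq[\ul a, t]$, $x_i \mapsto t a_i$, which, after localizing at $t$, becomes the injective map $\Qq[\ul x]_{(\text{something})} \hookrightarrow \Qq[\ul a, t, t^{-1}]$; the point is that $(\ul a, t)$ with $t$ invertible parametrizes all of $\Aa^r$ via $\ul x = t\ul a$, so the map is essentially a ring of fractions of $\Qq[\ul x]$ (more precisely: $\Qq[\ul a, t, t^{-1}] \cong \Qq[x_1,\ldots,x_r, t, t^{-1}]$ by $a_i = x_i/t$). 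Under this identification the images of $P_1,\ldots,P_s$ generate the extension of the ideal $\langle P_1,\ldots,P_s\rangle$, and since localization preserves coprimality (a common nonunit divisor upstairs would, after clearing, give one downstairs), coprimality of the $P_i$ in $\Qq[\ul x]$ gives coprimality of the $P_i(t\ul a)$ in $\Qq[\ul a, t, t^{-1}]$.

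The remaining issue is to pass from coprimality in $\Qq[\ul a, t, t^{-1}]$ back to coprimality in $\Qq[\ul a, t]$, i.e.\ to rule out that $t$ itself (or a power of $t$) is a common factor of $P_1(t\ul a),\ldots,P_s(t\ul a)$. This is exactly where the hypothesis $P_i(\ul 0) \neq 0$ for some $i$ enters: writing $P_i(\ul x) = \sum_\nu c_\nu \ul x^\nu$, we get $P_i(t\ul a) = \sum_\nu c_\nu t^{|\nu|}\ul a^\nu$, and the constant-in-$t$ term is $c_{\ul 0} = P_i(\ul 0) \neq 0$, so $t \nmid P_i(t\ul a)$ in $\Qq[\ul a, t]$. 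Since $\Qq[\ul a, t]$ is a UFD and $t$ is prime in it, this shows that the gcd of $P_1(t\ul a),\ldots,P_s(t\ul a)$ in $\Qq[\ul a,t]$ is not divisible by $t$; combined with the previous paragraph (any common divisor in $\Qq[\ul a,t]$ is a common divisor in the localization, hence a unit times a power of $t$), the gcd must be a unit, i.e.\ the $P_i(t\ul a)$ are coprime in $\Qq[\ul a, t]$.

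For the last sentence, apply Proposition \ref{prop:bertini-intro} with $k = \Qq$, with the polynomials $P_i(t\ul a) \in \Qq[\ul a, t] = \Qq[\ul a][t]$ viewed as polynomials in the single variable $t$ over the parameter ring $\Qq[\ul a]$ (so in the notation there, $m = r$, and the "$\ul x$" of the proposition is the single variable $t$). The coprimality just established is condition (ii) of that proposition, so condition (iii) gives a proper Zariski-closed $Z' \subset \Aa^r(\Qq)$ such that for $\ul a^\star \notin Z'$ the polynomials $P_i(t\ul a^\star)$ are coprime in $\Qq[t]$; intersecting with $\Zz^r$ and taking $Z = Z' \cap \Zz^r$ (a proper Zariski-closed subset of $\Zz^r$, proper since $Z'$ does not contain all of $\Aa^r$ and $\Zz^r$ is Zariski-dense) yields the claim.

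The main obstacle is the bookkeeping in the first paragraph: making precise the identification of $\Qq[\ul a, t, t^{-1}]$ with a localization of $\Qq[\ul x]$ and checking that coprimality is transported correctly across it. Everything else---the use of $P_i(\ul 0)\neq 0$ to kill the factor $t$, and the final appeal to Proposition \ref{prop:bertini-intro}---is routine. An alternative that avoids the localization language entirely: argue directly that a hypothetical common nonconstant divisor $D(\ul a, t) \in \Qq[\ul a, t]$ of the $P_i(t\ul a)$, which we may take $t$-free by the above, specializes at $t = 1$ to a common divisor $D(\ul a, 1)$ of $P_1(\ul a),\ldots,P_s(\ul a)$, and one checks $D(\ul a,1)$ is still nonconstant by a degree/homogeneity argument on the $t$-graded pieces---contradicting coprimality of the $P_i$.
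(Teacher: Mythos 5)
Your main argument (the first two paragraphs together with the appeal to Proposition \ref{prop:bertini-intro}) is correct, but it takes a genuinely different route from the paper's proof, and a cleaner one. The paper argues by contradiction from a hypothetical nonconstant common divisor $D(\ul a,t)$ of the $P_i(t\ul a)$ and splits into two cases: if $\deg_t D = 0$ it specializes $t=1$, and if $\deg_t D>0$ it normalizes one coordinate to $a_1=1$ and performs the change of variables $\ul x = t\ul a'$ by hand, clearing powers of $x_1$ to manufacture a nonconstant factor $\tilde D(\ul x)$ of the $P_i(\ul x)$. You package the same underlying observation into the single ring isomorphism $\Qq[\ul a,t,t^{-1}] \cong \Qq[\ul x,t,t^{-1}]$, $a_i \leftrightarrow x_i/t$, under which $P_i(t\ul a)\mapsto P_i(\ul x)$; coprimality of the $P_i$ in $\Qq[\ul x]$ passes trivially to $\Qq[\ul x,t,t^{-1}]$ (divisors of $t$-constant elements are $t$-constant up to the units $ct^k$), hence to $\Qq[\ul a,t,t^{-1}]$, and the hypothesis $P_{i_0}(\ul 0)\neq 0$ rules out the only remaining possible common factors $t^k$ in $\Qq[\ul a,t]$. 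This avoids the case split and the explicit clearing of denominators entirely; what the paper buys with its hands-on computation is self-containedness, while your route buys brevity and transparency about what is really going on (the substitution is a birational change of coordinates away from $t=0$). The reduction from coprimality in $\Qq[\ul a,t]=\Qq[\ul a][t]$ to coprimality in $\Qq(\ul a)[t]$, needed to invoke condition (ii) of Proposition \ref{prop:bertini-intro}, is exactly the equivalence ${\rm(i)}\Leftrightarrow{\rm(ii)}$ of Proposition \ref{prop:bertini}, so the final step is fine.

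One caveat on your closing alternative sketch: ``which we may take $t$-free by the above'' only gives $t\nmid D$, not $D\in\Qq[\ul a]$, so concluding that $D(\ul a,1)$ is nonconstant requires the homogeneity argument you gesture at but do not carry out. It does work if one takes $D$ to be the gcd of the $P_i(t\ul a)$: these are homogeneous of weight $0$ for the grading with $w(a_i)=-1$, $w(t)=1$, so the gcd $D$ is homogeneous, its $t$-graded pieces $D_k(\ul a)$ have distinct total degrees in $\ul a$, hence no cancellation occurs at $t=1$, and $t\nmid D$ forces some $D_k$ with $k>0$ to be nonconstant. This is essentially what the paper does in its $\deg_t D > 0$ case, written in coordinates; calling it ``routine'' undersells it, but the idea is right. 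In any event the main argument does not rely on it.
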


This is false if $P_1,\ldots, P_s$ vanish simultaneously at $\ul 0$.
For instance, with $P(x,y)=x$ and $Q(x,y)=y$, then $P(at,bt)= at$ and $Q(at,bt)=bt$ are not coprime, for any $(a,b) \in \Zz^2$.

\begin{corollary}
\label{cor:zariski}
Let $P_1(\ul x),\ldots,P_s(\ul x)$ be $s\geq 2$ nonzero coprime polynomials.
If $d_{\ul n_0} = 1$ for some $\ul n_0 \in \Zz^r$, then
$d_{\ul n} = 1$ for every $\ul n$ in a Zariski-dense subset of $\Zz^r$.
\end{corollary}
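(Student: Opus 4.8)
The plan is to deduce this from Lemma \ref{lem:rays} by a translation trick. First I would reduce to the case $\ul n_0 = \ul 0$: replacing each $P_i(\ul x)$ by $\widetilde P_i(\ul x) = P_i(\ul x + \ul n_0)$ preserves coprimality (translation is a ring automorphism of $\Qq[\ul x]$), and $\gcd(\widetilde P_1(\ul n),\ldots,\widetilde P_s(\ul n)) = d_{\ul n + \ul n_0}$, so a Zariski-dense set of good $\ul n$ for the $\widetilde P_i$ translates to a Zariski-dense set of good $\ul n$ for the $P_i$ (translation is also an automorphism of the affine space, hence sends Zariski-dense sets to Zariski-dense sets). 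So from now on assume $d_{\ul 0} = 1$, i.e. $\gcd(P_1(\ul 0),\ldots,P_s(\ul 0)) = 1$; in particular $P_i(\ul 0) \neq 0$ for at least one $i$, so the hypothesis of Lemma \ref{lem:rays} is satisfied (the case $r = 1$ is covered separately by Theorem \ref{th:stableone}, or trivially since then $\{0\}$ is already Zariski-dense issues aside — more precisely for $r=1$ the statement is immediate because any nonempty set works after noting the sequence $d_n$ is periodic, but it is cleanest to just invoke $r \ge 2$ for the ray argument and handle $r=1$ by Theorem \ref{th:stableone}).

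Next, fix any $\ul a^\star \in \Zz^r \setminus Z$ with $Z$ the proper Zariski-closed subset produced by Lemma \ref{lem:rays}, so that $P_1(t\ul a^\star),\ldots,P_s(t\ul a^\star)$ are nonzero coprime polynomials in $\Qq[t]$. Apply Theorem \ref{th:stableone} to these one-variable polynomials: their gcd-set $\mathcal{D}_{\ul a^\star} = \{\gcd(P_1(m\ul a^\star),\ldots,P_s(m\ul a^\star)) \mid m \in \Zz\}$ is a finite set, stable under gcd, with a smallest positive element that divides every element of $\mathcal{D}_{\ul a^\star}$; moreover $\gcd$ at $m=0$ equals $\gcd(P_1(\ul 0),\ldots,P_s(\ul 0)) = 1$. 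Hence $1 \in \mathcal{D}_{\ul a^\star}$, so there is some integer $m = m(\ul a^\star)$ with $d_{m\ul a^\star} = 1$. Thus the set of integer points $\ul n$ with $d_{\ul n} = 1$ contains at least one point on each line $\Zz \cdot \ul a^\star$ through the origin in direction $\ul a^\star \in \Zz^r \setminus Z$.

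The final step is to check that $\{\ul n \in \Zz^r \mid d_{\ul n} = 1\}$ is Zariski-dense. Suppose it were contained in some proper Zariski-closed subset $V = V(g)$, $g \in \Qq[\ul x]$ nonzero. By the previous paragraph, for every $\ul a^\star \in \Zz^r \setminus Z$ there is $m$ with $m\ul a^\star \in V$, i.e. $g(m\ul a^\star) = 0$; writing $g(t\ul a^\star) \in \Qq[t]$, this is a nonzero polynomial in $t$ (nonzero because $\ul a^\star \notin$ the Zariski-closed set where $g$ restricted to a ray vanishes identically — and that bad set, union $Z$, is still proper) having a nonzero integer root. But I should argue directly: the set $\{m\ul a^\star : m \in \Zz, \ \ul a^\star \in \Zz^r\setminus(Z \cup Z')\}$ for a suitable additional proper closed $Z'$ is Zariski-dense in $\Aa^r$ — indeed any line through $0$ not lying in $V$ meets $V$ in finitely many points, so infinitely many of its integer multiples avoid $V$; combined with the fact that integer directions outside $Z$ are Zariski-dense, one gets a Zariski-dense set of points all satisfying $d = 1$, a contradiction. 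I expect the main obstacle to be exactly this last bookkeeping: cleanly arguing that "one good point on Zariski-dense-many lines through the origin" upgrades to "a Zariski-dense set of good points." The clean way is: if all good points lay in a proper closed $V$, then each good line through $0$ (direction outside $Z$) would be forced into $V$, but a line through $0$ in direction $\ul a^\star$ lies in $V$ only if $\ul a^\star$ lies in a proper closed cone determined by $V$; since integer directions outside $Z$ are not all contained in that cone (both being proper closed in $\Aa^{r}$, their union is still proper, $k=\Qq$ infinite), pick $\ul a^\star$ outside both — the good point $m\ul a^\star$ on that line then does not lie in $V$, contradiction.
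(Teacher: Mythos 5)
Your argument tracks the paper's proof up to and including the invocation of Lemma~\ref{lem:rays}: reduce to $\ul n_0=\ul 0$, restrict the polynomials to rays $t\mapsto t\ul a^\star$, and note that for a Zariski-generic direction $\ul a^\star$ the restrictions $P_1(t\ul a^\star),\ldots,P_s(t\ul a^\star)$ are coprime in $\Qq[t]$. But at the key step you depart from the paper in a way that opens a genuine gap. From Theorem~\ref{th:stableone} you extract only ``$\mathcal{D}_{\ul a^\star}$ is finite and gcd-stable, and $1\in\mathcal{D}_{\ul a^\star}$.'' The last inclusion is trivial: it is just the hypothesis $d_{\ul 0}=1$ seen at $m=0$. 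So the ``good point on each line'' that your argument guarantees is the point $\ul 0$ itself, and nothing more. A single common point across all lines is as far from Zariski-dense as it gets, and your closing density argument indeed breaks down: you say ``each good line through $0$ would be forced into $V$,'' but one good point on a line does not force the line into $V$, and ``infinitely many of its integer multiples avoid $V$'' is not the same as ``infinitely many of its integer multiples have $d=1$'' --- the latter is exactly what you have failed to establish.

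What rescues the argument, and what the paper actually uses, is the \emph{periodicity} assertion of Theorem~\ref{th:stableone}, which you quote for the $r=1$ base case but never invoke in the ray computation. On a good ray the one-variable sequence $m\mapsto d_{m\ul a^\star}$ is periodic with some nonzero period $\delta(\ul a^\star)$, so $d_{k\delta(\ul a^\star)\,\ul a^\star}=d_{\ul 0}=1$ for \emph{every} $k\in\Zz$: a full arithmetic progression of good points on each good ray, not a single one. With that in hand the density step is immediate: if all good points lay in $V(g)$ then $g(t\ul a^\star)$ would vanish at infinitely many $t$, hence identically in $t$, for every $\ul a^\star$ in a Zariski-dense set of directions; writing $g(t\ul a)=\sum_i g_i(\ul a)t^i$ this forces each $g_i$ to vanish on a Zariski-dense set and so $g=0$. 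So the approach is salvageable, and is essentially the paper's, but only after you replace ``$1\in\mathcal{D}_{\ul a^\star}$'' by the periodicity conclusion.
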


\begin{proof}[Proof of Corollary \ref{cor:zariski}]
With no loss of generality, assume that $\ul n_0 = \ul 0$.
By Lemma \ref{lem:rays}, for all directions $\ul a^\star$ in a Zariski-open set of $\Qq^r$, the one variable polynomials $P_1(t \ul a^\star),\ldots,P_s(t \ul a^\star)$ are coprime.
From Theorem \ref{th:stableone}, for each of these $\ul a^\star$, we have $\gcd_i P_i(k\ul a^\star) =\gcd_i P_i(\ul 0) = 1$ for all $k$ in some nonzero ideal $\delta \Zz \subset \Zz$. The set of all such $k\ul a^\star \in \Zz^r$, with varying $k$ and $a^\star$, form a Zariski-dense subset of $\Zz^r$.
\end{proof}

\begin{proof}[Proof of Lemma \ref{lem:rays}] We prove the first part; the second part easily follows by combining it with Proposition \ref{prop:bertini-intro}.
On the contrary, suppose that $P_i(t\ul a) = D(\ul a,t) \cdot P_i'(\ul a,t)$,  ($i=1,\ldots,s$) with $\deg D>0$. If $\deg_t(D)=0$, then setting $t=1$ leads to a factorization $P_i(\ul a) = D(\ul a,1) \cdot P_i'(\ul a,1)$ where $\deg D(\ul a,1) > 0$; changing the variable $\ul a$ to $\ul x$ proves that the polynomials $P_i(\ul x)$ are not coprime.

Suppose next that $\deg_t D(\ul a,t)>0$. One may assume that $\deg_t D(a_1^\star,a_2,\ldots,a_r,t)>0$
 for some $a_1^\star \in k$. For simplicity take $a_1^\star=1$ (the general case only introduces some
 technicalities).
Set $\ul a' = (1,a_2,\ldots,a_r)$ and write the decomposition in $\Qq[\ul a',t]$:
$$P_i(t\ul a') = D(\ul a',t) \cdot P_i'(\ul a',t) \quad (i=1,\ldots,s)$$
with $\deg D(\ul a',t)>0$.

Set $\ul x = t\ul a'$, that is, $x_i = a_i t$ (and $x_1 = t$); hence $a_i = x_i/x_1$ (and $a_1 = 1$), $i=1,\ldots,r$. 
Using the change of variables $(\ul a',t) \mapsto \ul x$, we obtain:
$$P_i(\ul x) = D\left(\frac{\ul x}{x_1},x_1\right) \cdot P_i'\left(\frac{\ul x}{x_1},x_1\right) \quad (i=1,\ldots,s).$$
	
By hypothesis we have $P_{i_0}(\ul 0)\neq 0$ for some $i_0\in \{1,\ldots,s\}$.
This is equivalent to $t \!\not| P_{i_0}(t\ul a')$ and implies $t \!\not| D(\ul a',t)$ in $\Qq[\ul a',t]$.

Write 
$$D(\ul a',t) = \sum_{\ul i,j} \alpha_{\ul i,j}\ul a'^{\ul i}t^j \qquad \text{ in } \Qq[a_2,\ldots,a_r,t].$$
As $a_1=1$, the multi-index $\ul i$ stands for $(0,i_2,\ldots,i_r)$ and $|\ul i| = i_2+\cdots+i_r$.
Then
\begin{align*}
D\left(\frac{\ul x}{x_1},x_1\right) 
    &= \sum_{\ul i,j} \alpha_{\ul i,j} \left( \frac{\ul x^{\ul i}}{x_1^{|\ul i|}} \right) x_1^j 
	= \sum_{\ul i,j} \alpha_{\ul i,j} \ul x^{\ul i} x_1^{j-|\ul i|} \\
	&= \frac{1}{x_1^d} \sum_{\ul i,j} \alpha_{\ul i,j} \ul x^{\ul i} x_1^{j-|\ul i|+d} 
	= \frac{1}{x_1^d} \tilde D(\ul x) \\
\end{align*}
where $d\in \Zz$, and $\tilde D(\ul x) \in \Qq[\ul x]$ is not divisible by $x_1$.  A similar computation yields $P_i'\left(\frac{\ul x}{x_1},x_1\right)=\frac{1}{x_1^{d_i}}\tilde P_i'(\ul x)$ with $d_i\in \Zz$, and $\tilde P_i'(\ul x) \in \Qq[\ul x]$ not divisible by $x_1$. This gives:
$$x_1^{d+d_i} P_i(\ul x) =  \tilde D(\ul x) \tilde P_i'(\ul x)\quad (i=1,\ldots,s).$$
By definition $\tilde D(\ul x)$ is not a monomial in $x_1$.
Moreover $\tilde D(\ul x)$ is a nonconstant polynomial. Assume on the contrary that 
$\tilde D(\ul x)$ is constant. Then $\alpha_{\underline i,j} = 0$ for $(\underline i,j) \neq (\underline 0,d)$. This implies $D(\underline a',t) = \alpha_{\underline 0,d} t^d$, in contradiction with $t \!\not| D(\ul a',t)$ and $\deg D(\underline a',t) > 0$.
Conclusion: $\tilde D(\ul x)$ is a non trivial factor of each of the $P_i(\ul x)$, hence $P_1(\ul x),\ldots,P_s(\ul x)$ are not coprime.
\end{proof}

We end by a generalization of Lemma \ref{lem:rays}.
Let $P_1(\ul x),\ldots,P_s(\ul x) \in \Zz[\ul x]$ be a family of coprime polynomials in two or more variables ($r\ge2$).

\begin{lemma}
\label{lem:second}
Let $P_1(\ul x),\ldots,P_s(\ul x) \in \Zz[\ul x]$ be nonzero coprime polynomials in $r\geq 2$ variables.
Let $\ul n \in \Zz^r$ such that $P_i(\ul n)\neq0$ for at least one $i\in\{1,\ldots,s\}$.
Then the polynomials $P_1(u\ul n + t \ul a),\ldots, P_s(u\ul n + t \ul a)$ are coprime in $\Qq[\ul a,u,t]$.
Consequently there is a proper Zariski-closed set $Z\subset \Zz^r$ such that for all $\ul a^\star \in \Zz^r \setminus Z$, 
the polynomials $P_1(u\ul n + t \ul a^\star),\ldots,P_s(u\ul n + t \ul a^\star)$ are coprime in  $\Qq[u,t]$.
\end{lemma}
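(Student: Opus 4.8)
The plan is to derive the coprimality of $P_1(u\ul n+t\ul a),\dots,P_s(u\ul n+t\ul a)$ in $\Qq[\ul a,u,t]$ from that of $P_1(\ul x),\dots,P_s(\ul x)$ via an affine change of variables, a single application of Proposition~\ref{prop:bertini-intro}, and two specializations, and then to obtain the ``Consequently'' statement from Proposition~\ref{prop:bertini-intro} exactly as in Lemma~\ref{lem:rays}. The step I expect to be the real obstacle is not getting coprimality after extending the base field, but the last one: showing that the gcd of the $P_i(u\ul n+t\ul a)$ is an actual \emph{constant} and not merely some nonconstant polynomial in $u,t$ --- this is where the assumption $P_{i_0}(\ul n)\neq 0$ and a translation trick both come in.

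First I would record two changes of variables. (a) For every $t^\star\in\Qq\setminus\{0\}$ and every $u^\star\in\Qq$, the substitution $a_j\mapsto t^\star a_j+u^\star n_j$ is an invertible affine automorphism of $\Qq[\ul a]$ (its linear part is $t^\star\cdot\mathrm{Id}$), hence it carries the family $P_1(\ul a),\dots,P_s(\ul a)$, coprime in $\Qq[\ul a]$ by hypothesis, to the family $P_1(u^\star\ul n+t^\star\ul a),\dots,P_s(u^\star\ul n+t^\star\ul a)$, which is therefore coprime in $\Qq[\ul a]$. (b) The substitution $a_j\mapsto a_j-u\,n_j$, $u\mapsto u$ is an automorphism of $\Qq[\ul a,u]$ carrying $P_i(u\ul n+\ul a)$ to $P_i(\ul a)$; since adjoining the free variable $u$ leaves $P_1(\ul a),\dots,P_s(\ul a)$ coprime, it follows that $P_1(u\ul n+\ul a),\dots,P_s(u\ul n+\ul a)$ are coprime in $\Qq[\ul a,u]$.

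For the first assertion I would apply Proposition~\ref{prop:bertini-intro} over $k=\Qq$ to the polynomials $P_i(u\ul n+t\ul a)\in\Qq[\ul a,u,t]$, taking $(u,t)$ as the parameter variables (the ``$\ul a$'' of that proposition) and $\ul a$ as the main variables (the ``$\ul x$'' of that proposition). By (a), condition~(iv) of the proposition holds with the Zariski-dense set $Y=\{(u^\star,t^\star)\in\Qq^2\mid t^\star\neq 0\}$; hence condition~(i) holds, i.e.\ the gcd $D$ of $P_1(u\ul n+t\ul a),\dots,P_s(u\ul n+t\ul a)$ in $\Qq[\ul a,u,t]$ lies in $\Qq[u,t]$. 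Now I specialize twice. Substituting $\ul a=\ul 0$ into the divisibility $D\mid P_{i_0}(u\ul n+t\ul a)$ gives $D\mid P_{i_0}(u\ul n)$, where $i_0$ is an index with $P_{i_0}(\ul n)\neq 0$; since $P_{i_0}(u\ul n)$ is a nonzero element of $\Qq[u]$ (at $u=1$ it equals $P_{i_0}(\ul n)\neq 0$), comparing $t$-degrees in the UFD $\Qq[u,t]$ forces $D=D(u)\in\Qq[u]$. Substituting then $t=1$ into $D(u)\mid P_i(u\ul n+t\ul a)$ gives $D(u)\mid P_i(u\ul n+\ul a)$ in $\Qq[\ul a,u]$ for every $i$, and by (b) these polynomials are coprime, so $D(u)$ is a unit, hence a nonzero constant. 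Therefore $P_1(u\ul n+t\ul a),\dots,P_s(u\ul n+t\ul a)$ are coprime in $\Qq[\ul a,u,t]$.

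For the ``Consequently'' part I would apply Proposition~\ref{prop:bertini-intro} once more over $k=\Qq$, this time with $\ul a$ as the parameter variables and $(u,t)$ as the main variables: the coprimality just proved implies condition~(i) of the proposition (the gcd being a nonzero constant, it lies in $\Qq[\ul a]$), hence condition~(iii) holds and yields a proper Zariski-closed subset $Z_0\subset\Qq^r$ such that $P_1(u\ul n+t\ul a^\star),\dots,P_s(u\ul n+t\ul a^\star)$ are coprime in $\Qq[u,t]$ for every $\ul a^\star\in\Qq^r\setminus Z_0$; then $Z=Z_0\cap\Zz^r$ is the required proper Zariski-closed subset of $\Zz^r$, $\Zz^r$ being Zariski-dense in $\Qq^r$. (One could instead reduce the first assertion to Lemma~\ref{lem:rays} applied to $P_i(\ul x+(u+1)\ul n)\in\Zz[\ul x,u]$, whose value at the origin is $P_i(\ul n)$, but unwinding the resulting substitutions is somewhat more technical, so I prefer the route above.)
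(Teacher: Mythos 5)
Your proof is correct, and it takes a genuinely different and arguably leaner route than the paper's. The paper leans on Lemma~\ref{lem:rays}: it fixes a generic $u^\star\in\Qq$ with $P_{i_0}(u^\star\ul n)\neq 0$, applies Lemma~\ref{lem:rays} centered at $u^\star\ul n$ to get coprimality of $P_i(u^\star\ul n+t\ul a)$ in $\Qq[\ul a,t]$, then invokes Proposition~\ref{prop:bertini-intro} (with $u$ as parameter) to pass to $\Qq(u)[\ul a,t]$, and finally rules out a nonconstant $D(u)$ by the clever substitution $t=1$, $\ul a=-u\ul n+\ul c$. You instead bypass Lemma~\ref{lem:rays} entirely: the observation that $a_j\mapsto t^\star a_j+u^\star n_j$ is an affine automorphism of $\Qq[\ul a]$ whenever $t^\star\neq 0$ gives condition~(iv) of Proposition~\ref{prop:bertini-intro} directly (with $(u,t)$ as the $m=2$ parameters), which is a cleaner way to reach the conclusion that the gcd lies in $\Qq[u,t]$; and your two specializations (first $\ul a=\ul 0$ to kill the $t$-dependence using $P_{i_0}(\ul n)\neq 0$, then $t=1$ combined with the automorphism $a_j\mapsto a_j-un_j$ of $\Qq[\ul a,u]$ to kill the $u$-dependence) play the same role as the paper's single substitution $t=1$, $\ul a=-u\ul n+\ul c$, but keep everything phrased as divisibilities in polynomial rings rather than as a one-shot evaluation. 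Both approaches are sound; yours is more self-contained since it does not rely on Lemma~\ref{lem:rays}, while the paper's is shorter given that Lemma~\ref{lem:rays} is already available. The derivation of the ``Consequently'' part is the same in both.
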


\begin{proof}
For every $u^\star \in \Qq$, the polynomials $\tilde P_i(\ul x) := P_i(u^\star \ul n + \ul x)$, $i=1,\ldots,s$, are coprime in $\Qq[\ul x]$
(they are deduced from the $P_i(\ul x)$ by a mere translation on the variables).
As $P_i(\ul n)\neq 0$ for some $i$, then $\tilde P_i(\ul 0) = P_i(u^\star \ul n) \neq 0$ for all but finitely many $u^\star \in \Qq$.
By Lemma \ref{lem:rays}, for such $u^\star$, the polynomials $\tilde P_1(t \ul a),\ldots,\tilde P_s(t \ul a)$ are coprime in $\Qq[\ul a,t]$, hence 
so are the polynomials 
$P_1(u^\star \ul n + t\ul a),\ldots,P_s(u^\star \ul n + t\ul a)$.
It follows from Proposition \ref{prop:bertini-intro}
that the polynomials $P_1(u \ul n + t \ul a),\ldots,P_s(u \ul n + t \ul a)$ are coprime in $\Qq(u)[\ul a,t]$.

Assume next that their gcd in $\Qq[u,\ul a,t]$ is 
a nonconstant polynomial $D(u)\in \Qq[u]$. Thus we have $P_i(u \ul n + t \ul a)=D(u)P_i^\prime(\ul a,u,t)$ for some $P_i^\prime \in \Qq[u,\ul a,t]$, $i=1,\ldots,s$.
Choose $t^\star=1$ and $\ul a^\star(u) = -u\ul n + \ul c$, where $\ul c$ is a constant such that $P_i(\ul c) \neq 0$, for at least one $i\in \{1,\ldots,s\}$. For this choice, we have $P_i(u \ul n + t^\star \ul a^\star(u)) = P_i(\ul c)=D(u)P_i'(\ul a^\star(u),u,t^\star)$. As $P_i(\ul c)$ is a nonzero constant, $D(u)$ is a constant polynomial. 

By Remark \ref{rk:bertini}(a), the polynomials $P_1(u \ul n + t \ul a),\ldots, P_s(u \ul n + t \ul a)$ are coprime in $\Qq[u][\ul a,t]$.

This proves the first assertion of Lemma \ref{lem:second}; the second one follows by combining it with Proposition \ref{prop:bertini-intro}.
\end{proof}

\section{Proof of the stability}
\label{sec:proof}

This section is devoted to the proof of Theorem \ref{th:main}.
\smallskip

\textbf{Idea of the proof.} Consider two coprime polynomials $P(x,y)$ and $Q(x,y)$
and the special case of two pairs $(m,n_1)$ and $(m,n_2)$ (with the same $x$-coordinate).
We will find $n_3$ such that $\gcd(d_{m,n_1},d_{m,n_2}) = d_{m,n_3}$.
As $P(x,y)$ and $Q(x,y)$ are coprime and by Bézout, there exist $A(x), B(x), R(x) \in \Zz[x]$ such that:
$$A(x)P(x,y) + B(x)Q(x,y) = R(x).$$

For all $m\in\Zz$ but finitely many, we have $R(m) \neq 0$. 
For such $m$, $P(m,y)$ and $Q(m,y)$ are coprime (in $\Qq[y]$).
By the gcd stability result in one variable (Theorem \ref{th:stableone}), 
there exists $n_3$ such that $\gcd(d_{m,n_1},d_{m,n_2}) = d_{m,n_3}$.
\smallskip

The proof extends this idea: we need (a) to deal with the case where $P(m,y)$ and $Q(m,y)$ are no longer coprime; (b) also consider pairs $(m_1,n_1)$ and $(m_2,n_2)$ with $m_1\neq m_2$.

\medskip
\textbf{Step 1.}
Let $\ul m\in\Zz^r$ and $\ul n\in\Zz^r$.
For simplicity, and with no loss of generality, we assume $\ul m= \ul 0$.
We may also assume that $P_i(\ul 0)\not=0$ for at least one $i\in\{1,\ldots,s\}$: otherwise $d_{\ul 0}=0$ so that we can directly conclude $\gcd(d_{\ul 0},d_{\ul n}) = d_{\ul n}$. We may also assume that $P_i(\ul n)\not=0$ for at least one $i\in\{1,\ldots,s\}$.
We reduce from several to one variables by restricting the polynomials on the line passing through $\ul 0$ and $\ul n$.
That is, we set:
$$P^0_i(t) = P_i(t\ul n), \qquad i=1,\ldots,s.$$
Then $P^0_i(0)=P_i(\ul 0)$ and $P^0_i(1)=P_i(\ul n)$.
However the polynomials $P^0_1(t),\ldots, P^0_s(t)$ are not necessarily coprime.
The following picture helps visualize the next steps of the proof.

\bigskip
\textbf{Picture of the proof.}

\begin{center}
\begin{tikzpicture}

\draw[->,>=latex,gray] (-1,0) -- (9,0);
\draw[->,>=latex,gray] (0,-1) -- (0,6);

\coordinate (A) at (0,0);
\coordinate (B) at (70:3);
\node  at (A)[above left] {$\underline 0$};
\node at (B) [above left] {$\underline n$};
\draw[very thick, blue,shorten >=-1cm,shorten <=-1cm] (A) -- (B);
\fill (A) circle (2pt);
\fill (B) circle (2pt);

\coordinate (AA) at ($(5,2)+(0,0)$);
\coordinate (BB) at  ($(5,2)+(70:3)$);
\draw[very thick, red,shorten >=-1cm,shorten <=-1cm] (AA) -- (BB);
\fill (AA) circle (2pt);
\fill (BB) circle (2pt);

\draw[very thick, green!70!black,shorten >=-1cm,shorten <=-1cm] (A) -- (AA);
\draw[very thick, green!70!black,shorten >=-1cm,shorten <=-1cm] (B) -- (BB);

\node[blue] at (-0.5,-1.25){Step 1. $P_i^0(t)$};
\node[blue, text width=4cm, align=center] at (-0.5,-2.25){maybe not coprime and not stable by gcd}; 

\node[red] at (4.5,0.75){Step 3. $P_i^3(u)$};
\node[red] at (4.5,0.25){coprime and stable by gcd}; 

\node[green!70!black] at (6.5,2.75){Step 2. $P_i^1(t)$};
\node[green!70!black] at (6.5,2.25){coprime};

\node[green!70!black] at (7.5,5.5){Step 2. $P_i^2(t)$};
\node[green!70!black] at (7.5,5.0){coprime};

\draw[->,>=latex,gray,thick,shorten >=0.5cm,shorten <=0.5cm] (A) to[bend left=20] node[midway,above,scale=0.7]{same gcd}(AA);
\draw[->,>=latex,gray,thick,shorten >=0.5cm,shorten <=0.5cm] (B) to[bend left=20] node[midway,above,scale=0.7]{same gcd}(BB);

\draw[->,>=latex,green!70!black,thick] (2,2.5) -- ++(2.5,1) node[midway,above,scale=0.7] {$\underline a^\star$};

\end{tikzpicture}

\end{center}

\medskip
\textbf{Step 2.}
By Lemma \ref{lem:rays}, for all $\ul a^\star\in \Zz^r$ but in a proper Zariski-closed set,
the polynomials $P_1(t\ul a^\star),\ldots, P_s(t\ul a^\star)$ are coprime in $\Qq[t]$.
Moreover, again by Lemma \ref{lem:rays} centered at $\ul n$, for all $\ul a^\star\in \Zz^r$ but in a proper Zariski-closed set, the polynomials $P_1(\ul n + t\ul a^\star), \ldots, P_s(\ul n + t\ul a^\star)$ are coprime in $\Qq[t]$.
Finally by Lemma \ref{lem:second}, for all $\ul a^\star\in \Zz^r$ but in a proper Zariski-closed set, the polynomials $P_1(u\ul n + t \ul a^\star), \ldots,P_s(u\ul n + t \ul a^\star)$ are coprime in $\Qq[u,t]$.

Pick $\ul a^\star\in \Zz^r$ such that the following conditions are satisfied:
\begin{itemize}
  \item $P^1_i(t):=P_i(t\ul a^\star)$, $i=1,\ldots,s$, are coprime in $\Qq[t]$,
  \item $P^2_i(t):=P_i(\ul n + t\ul a^\star)$, $i=1,\ldots,s$, are coprime in $\Qq[t]$,
  \item $P_i(u\ul n + t \ul a^\star)$, $i=1,\ldots,s$, are coprime in $\Qq[u,t]$.
\end{itemize} 

In the computations below, all gcds are computed with respect to the indices $i=1,\ldots,s$.

By the one variable case for $P^1_1(t),\ldots,P^1_s(t)$, the corresponding sequence of gcd is periodic, for some (nonzero) period $\delta_1\in \Zz$ (Theorem \ref{th:stableone}).  
This yields that for any $k\in\Zz$, we have $\gcd P^1_i(0)= \gcd P^1_i (0+k\delta_1)$,
and so
$$d_{\ul 0} = \gcd P_i(\ul 0) = \gcd P_i(k\delta_1\ul a^\star).$$

We do the same for $P^2_i(t)$. For some period $\delta_2\in \Zz$, for any $k\in\Zz$, we have
$\gcd P^2_i(0) = \gcd P^2_i(0+k\delta_2)$, and so
$$d_{\ul n} = \gcd P_i(\ul n) = \gcd P_i(\ul n + k\delta_2 \ul a^\star).$$

We also have $P_1(u\ul n + t \ul a^\star), \ldots,P_s(u\ul n + t \ul a^\star)$ coprime in $\Qq[u,t]$. Thus, by 
Proposition \ref{prop:bertini-intro},
for all but finitely many $t^\star\in \Qq$, the polynomials $P_1(u\ul n + t^\star \ul a^\star),\ldots,
P_s(u\ul n + t^\star \ul a^\star)$ are coprime in $\Qq[u]$.
 
\medskip
\textbf{Step 3.}
Set $t^\star= k \delta_1\delta_2$ with $k\in \Zz$ and $P^3_i(u) := P_i(u\ul n + t^\star\ul a^\star)$, $i=1,\ldots,s$.
Pick $k$ large enough to guarantee that $P^3_1(u),\ldots, P^3_s(u)$
 are coprime in $\Qq[u]$ (Proposition \ref{prop:bertini-intro}).

Note that 
$$\gcd P^3_i(0) = \gcd P_i(t^\star\ul a^\star) = \gcd P_i(k\delta_1\delta_2\ul a^\star) = \gcd P_i(\ul 0) = d_{\ul 0}$$
and
$$\gcd P^3_i(1) = \gcd P_i(\ul n + t^\star\ul a^\star) = \gcd P_i(\ul n + k\delta_1\delta_2\ul a^\star) =  \gcd P_i(\ul n) = d_{\ul n}.$$

Now by the gcd stability (resp.{} lcm stability) assertion from Theorem \ref{th:stableone}, applied to 
the one variable coprime polynomials $P^3_1(u),\ldots,P^3_s(u)$, 
there exists
$\ell\in\Zz$ such that 
$$\gcd\left( \gcd P^3_i(0), \gcd P^3_i(1) \right) = \gcd P^3_i(\ell)$$
(resp.{} $\lcm\left( \gcd P^3_i(0), \gcd P^3_i(1) \right) = \gcd P^3_i(\ell)$).
Setting $\ul m = \ell \ul n + t^\star \ul a^\star$, so $P^3_i(\ell) = P_i(\ul m)$, 
we obtain
$$\gcd\left( d_{\ul0}, d_{\ul n} \right) = d_{\ul m}$$
(resp.{} $\lcm\left( d_{\ul0}, d_{\ul n} \right) = d_{\ul m}$),
which proves the stability of $\mathcal{D}$ by gcd (resp.{} lcm).

\section{Proof of the Ekedahl--Poonen formula}
\label{sec:proof-poonen}

This section is mainly devoted to the proof of the Ekedahl--Poonen formula
as stated in Theorem \ref{th:poonen}. 
While \cite[Theorem 3.1]{Poo} is valid over the rings $\Zz$ and $\Ff_q[t]$, here we state and prove Theorem \ref{th:poonen} over $\Zz$ only, which enables simplifications. Another simplification is that our density is defined by squared boxes, while  \cite{Poo} allows rectangular ones. Another difference 
(minor for the proof, but important for the applications) is that we allow any $s\ge2$ polynomials (instead of $2$). Finally in Section \ref{ssec:gen_sev_pol}, we generalize the formula to the situation of \emph{several families}
of coprime polynomials (Proposition \ref{prop:poonen-gen}), and then use this generalization to extend the proof of Theorem \ref{cor:HS} given in Section \ref{ssec:HS-intro} for one polynomial to several polynomials.

\subsection{Sets}
As usual, fix $s\geq 2$ nonzero polynomials $P_1(\ul x), \ldots, P_s(\ul x) \in \Zz[\ul x]$.
In the following,  $p$ is a prime number, and $\mathcal{P}$ the set of prime numbers.

For $p \in \mathcal{P}$, consider the set:
$$\mathcal{R}_p = \big\{ \ul n \in \Zz^r \mid p \text{ does not divide all } P_1(\ul n),\ldots,P_s(\ul n) \big\}.$$

Then, with $\textstyle \mathcal{R}$ the set (introduced in Section \ref{ssec:poonen-intro}) of all $\ul n \in \Zz^r $ such that $P_1(\ul n), \ldots,P_s(\ul n)$ are coprime,
we have:

$$\mathcal{R} = \bigcap_{p \in \mathcal{P}} \mathcal{R}_p = \left\{ \ul n \in \Zz^r \mid \gcd (P_1(\ul n),\ldots,P_s(\ul n)) = 1 \right\}.$$

We will approximate $\mathcal{R}$ by sets $\mathcal{R}_{\le M}$ defined by:
$$\mathcal{R}_{\le M} = \bigcap_{p \le M} \mathcal{R}_p = \big\{ \ul n \in \Zz^r \mid \text{ for every } p \le M, p \text{ does not divide all } P_1(\ul n),\ldots,P_s(\ul n)  \big\}.$$

We will also  work with:
$$\mathcal{Q}_p 
= \Zz^r \setminus \mathcal{R}_p 
= \big\{ \ul n \in \Zz^r \mid p \text{ divides } P_1(\ul n),\ldots,P_s(\ul n) \big\}
= \big\{ \ul n \in \Zz^r \mid p \text{ divides } \gcd_{1\leq i\leq s}P_i(\ul n) \big\}.$$

and 
\begin{align*}
\mathcal{Q}_{> M} 
&= \bigcup_{p > M} \mathcal{Q}_p 
= \big\{ \ul n \in \Zz^r \mid \text{ there exists } p >M, p \text{ divides } P_1(\ul n),\ldots,P_s(\ul n) \big\} \\
&= \big\{ \ul n \in \Zz^r \mid \text{ there exists } p >M \text{ such that } p \text{ divides } \gcd_{1\leq i\leq s}P_i(\ul n) \big\}. 
\end{align*}

Here are the main steps of the proof:
\begin{itemize}
  \item Compute the density of $\mathcal{Q}_p$ (and $\mathcal{R}_p$) in terms of $c_p$.

  \item Prove that this density is in $O(\frac1{p^2})$.

  \item Compute the density of $\mathcal{R}_{\le M}$ from $\mathcal{R}_p$, using the Chinese Remainder Theorem.

  \item Prove that 
$\mu(\mathcal{R}_{\le M}) \xrightarrow[M\to+\infty]{} \mu(\mathcal{R})$.
\end{itemize}

For $r=1$, the last step is not necessary since, following notation of Section \ref{ssec:poonen-one}, for $M \ge \delta$, we have $\mathcal{R}_{\le M} = \mathcal{R}$.

\subsection{Density of $\mathcal{Q}_{p}$ and $\mathcal{R}_{p}$}

By definition,  $\ul n \in \mathcal{Q}_{p}$ if and only if 
$P_i(\ul n) = 0 \pmod p$ for each $i=1,\ldots,s$.
Hence 
\begin{equation}
\label{eq:cp}
\# (\mathcal{Q}_{p} \cap \llbracket 0,p-1 \rrbracket^r) = c_p
\end{equation}

In fact, $p$ divides $P_i(n_1,\ldots,n_r)$ if and only if $p$ divides $P_i(n_1+k_1p,\ldots,n_r+k_rp)$ for any $k_j\in\Zz$.
Hence $\mathcal{Q}_{p}$ is invariant by any translation of vector $(k_1p,\ldots,k_rp)$ (with $k_j\in \Zz$).
Hence, as a function of $B$, the cardinality $\# (\mathcal{Q}_{p} \cap \mathbb{B})$ (with $\mathbb{B} =  \llbracket 0,B-1 \rrbracket^r$) is asymptotic to $c_p \left( \frac B p \right)^r$ as $B\rightarrow \infty$ (this formula is exact if $p$ divides $B$).

Then:
\begin{equation}
\label{eq:qp}
\mu(\mathcal{Q}_p) = \lim_{B \to +\infty} \frac{\# (\mathcal{Q}_{p} \cap \mathbb{B}) }{\# \mathbb{B}}
= \frac{c_p}{p^r}
\end{equation}
As $\mathcal{R}_p = \Zz^r \setminus \mathcal{Q}_p$ we also get:
\begin{equation}
\label{eq:rp}
\mu(\mathcal{R}_p) = 1 - \frac{c_p}{p^r}
\end{equation}

\subsection{Bound for $\mathcal{Q}_{p}$}

We need to bound the number $c_p$ of solutions in $(\Zz/p\Zz)^r$ of the set of equations 
$P_i(\ul n) = 0 \pmod p$ ($i=1,\ldots,s$).
If $r=1$, we explained in Section \ref{ssec:poonen-intro} that $c_p=0$ for all suitably large primes $p$.
For $r\geq 2$, one can bound $c_p$ using the Bézout theorem over $\Zz/p\Zz$. For $r=2$, one can use for instance \cite[Theorem 4.1]{Tao}. For $r\ge2$, we have this general version, by Lachaud--Rolland \cite[Corollary 2.2]{LR}:
\smallskip

\textbf{General Bézout theorem.} 
\emph{Let $r\ge2$. We have $c_p \le d^s \cdot p^m$, where $m$ is the dimension of the zero-set of the polynomials $P_1(\ul x),\ldots, P_s(\ul x)$, assumed to be of degree $\le d$.}

\begin{corollary}
\label{cor:bezout}
For all sufficiently large $p$, we have $c_p \le d^s \cdot p^{r-2}$. 
Consequently, we obtain $\mu(\mathcal{Q}_p) = O\left(\frac{1}{p^2}\right)$.
\end{corollary}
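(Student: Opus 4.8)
The plan is to prove Corollary \ref{cor:bezout} in two short moves: first deduce the bound $c_p \le d^s \cdot p^{r-2}$ from the General Bézout theorem by controlling the dimension $m$ of the zero-set of $P_1(\ul x),\ldots,P_s(\ul x)$ over $\Zz/p\Zz$; then translate this into the density estimate via the formula $\mu(\mathcal{Q}_p) = c_p/p^r$ established in \eqref{eq:qp}.

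For the first move, the key point is that since $P_1(\ul x),\ldots,P_s(\ul x)$ are coprime in $\Qq[\ul x]$ (this is the standing hypothesis, as $\mathcal{Q}_p$ is only of interest in that setting), their common zero-locus in $\overline{\Qq}^r$ has dimension at most $r-2$: a nonempty hypersurface has dimension $r-1$, and the intersection of two coprime hypersurfaces — having no common component — drops the dimension by at least one more. I would then invoke Corollary \ref{cor:ostrowski}: for all but finitely many primes $p$, the reductions $\overline{P_1}^p(\ul x),\ldots,\overline{P_s}^p(\ul x)$ remain coprime in $\Zz/p\Zz[\ul x]$, so that the dimension $m$ of their common zero-set over $\overline{\Ff_p}$ is likewise at most $r-2$. (One should be slightly careful that the dimension of the zero-set can only go down, not up, upon reduction at a good prime; this is where coprimality modulo $p$ is used.) Applying the General Bézout theorem with this $m \le r-2$ and the common degree bound $d = \ell = \max_i \deg P_i$ yields $c_p \le d^s \cdot p^{r-2}$ for all sufficiently large $p$.

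For the second move, substitute into \eqref{eq:qp}: for all sufficiently large $p$,
$$\mu(\mathcal{Q}_p) = \frac{c_p}{p^r} \le \frac{d^s \cdot p^{r-2}}{p^r} = \frac{d^s}{p^2},$$
which is exactly the assertion $\mu(\mathcal{Q}_p) = O(1/p^2)$, the implied constant being $d^s$ (with $d$ fixed once the polynomials are fixed). The case $r=1$ is not covered by the General Bézout theorem as stated, but there $c_p = 0$ for all large $p$ (as recalled in Section \ref{ssec:poonen-one}), so the bound holds trivially; I would mention this for completeness.

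The main obstacle is the dimension-drop argument: making precise that coprimality of $P_1(\ul x),\ldots,P_s(\ul x)$ in $\Qq[\ul x]$ forces the common zero-set to have dimension $\le r-2$, and that this persists after reduction modulo all but finitely many primes. The first half is standard algebraic geometry (no common irreducible component means the intersection is a proper closed subset of each hypersurface), but one must handle the bookkeeping of possibly reducible or embedded components cleanly; the second half rests on Corollary \ref{cor:ostrowski} together with the semicontinuity of fiber dimension for the reduction map $\Spec \Zz[\ul x] \to \Spec \Zz$. Everything else is a direct substitution.
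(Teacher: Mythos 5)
Your proposal follows the same overall skeleton as the paper's proof: establish that the common zero-set has dimension $\le r-2$ over $\overline{\Ff_p}$ for large $p$, feed that into the General B\'ezout theorem, and divide by $p^r$. The second move (the density estimate) is identical. The difference is in how you reach the dimension bound modulo $p$. The paper argues \emph{directly} over $\overline{\Ff_p}$: after invoking Corollary \ref{cor:ostrowski} (and the passage from $\Ff_p$ to $\overline{\Ff_p}$ coprimality), it first shows $\dim \le r-1$ since $\overline{P_1}^p$ has a nonunit factor, then rules out dimension $r-1$ via Krull's Hauptidealsatz — a height-$1$ minimal prime over the ideal would be principal, forcing a common factor. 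You instead propose to prove the dimension bound once over $\overline\Qq$ and then \emph{transfer} it to $\overline{\Ff_p}$ via upper semicontinuity of fiber dimension for $V(P_1,\ldots,P_s)\to\Spec\Zz$. That route is also valid (the set of primes where the fiber dimension exceeds the generic value is a proper constructible subset of $\Spec\Zz$, hence finite), but a caution: your parenthetical ``the dimension of the zero-set can only go down, not up, upon reduction at a good prime'' has the directionality backwards as a general slogan — upper semicontinuity says the dimension can jump \emph{up} on a closed locus of bad primes, and the point is precisely that this locus is finite. Also, in the semicontinuity route, coprimality modulo $p$ is not actually what prevents the jump; semicontinuity handles that on its own, and coprimality mod $p$ (from Corollary \ref{cor:ostrowski}) is rather what guarantees the reduced polynomials are nonzero so that the General B\'ezout theorem applies with the same degree bound. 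Alternatively, you could skip the transfer entirely and simply re-run your ``no common component'' argument over $\overline{\Ff_p}$ using coprimality mod $p$ — this is exactly what the paper does, and it avoids the semicontinuity bookkeeping. Either way, the result holds, and your $r=1$ remark is correct and a nice completeness touch.
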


\begin{proof}[Proof of Corollary \ref{cor:bezout}]
The polynomials $P_1(\ul x),\ldots, P_s(\ul x)$ are coprime in $\Zz[\ul x]$. By 
Corollary \ref{cor:ostrowski}, they are coprime
in $\Qq[\ul x]$ and the polynomials $\overline{P_1}^{p}(\ul x),\ldots, \overline{P_s}^p(\ul x)$ (reduced modulo $p$) are nonzero and coprime in $\Ff_p[\ul x]$ for all suitably large primes $p$. It follows that they  are coprime in $\overline{\Ff_p}[\ul x]$ for the same primes $p$ (this is explained for example in \cite[\S 2.1]{BDNschinzel}).

Fix such a prime $p$ and consider the ideal $\mathcal{I} = \langle \overline{P_1}^{p},\ldots, \overline{P_s}^p\rangle \subset \overline{\Ff_p}[\ul x]$. We estimate below the dimension of the zero-set $Z(\mathcal{I})\subset \overline{\Ff_p}^r$ of $\mathcal{I}$ and then we will apply the general B\'ezout theorem.  
Classically this dimension is also the Krull dimension $\dim  \overline{\Ff_p}[\ul x]/\mathcal{I}$ of the quotient ring $\overline{\Ff_p}[\ul x]/\mathcal{I}$ (e.g. \cite[Proposition 1.7]{hartshorne}). 

By definition, $\dim \overline{\Ff_p}[\ul x]/\mathcal{I}$ is  the supremum of the heights of minimal prime ideals of $\overline{\Ff_p}[\ul x]$ containing $\mathcal{I}$.
We may assume that $\deg \overline{P_1}^{p} \geq 1$; otherwise $c_p=0$. Then $\overline{P_1}^{p}$ has at least one irreducible factor $\Delta \in \overline{\Ff_p}[\ul x]$. Furthermore the prime ideal $\langle \Delta \rangle \subset \overline{\Ff_p}[\ul x]$ is not maximal (by Nullstellensatz and $r\geq 2$), but is contained in a maximal ideal. We deduce that $ {\rm height}(\langle {\Delta}\rangle)\geq 1$, and, by \cite[Theorem 1.8 A]{hartshorne}, that
$$\dim {\overline{\Ff_p}[\ul x]/\mathcal{I}} \leq \dim{\overline{\Ff_p}[\ul x]/\langle \overline{P_1}^{p}\rangle} \leq r-1.$$

Assume that $\dim {\overline{\Ff_p}[\ul x]/\mathcal{I}} = r-1$. Let ${\mathfrak{p}} \subset \overline{\Ff_p}[\ul x]$ be a minimal prime ideal containing $\mathcal{I}$; thus $\dim {\overline{\Ff_p}[\ul x]/{\mathfrak{p}}} = r-1$, or, equivalently ${\mathfrak{p}}$ is of height $1$. 
By Krull's Hauptidealsatz \cite[Theorem 1.11 A \& Proposition 1.13]{hartshorne}, the variety $Z({\mathfrak{p}})$ is a hypersurface $Z(f)$, for some irreducible polynomial $f\in \overline{\Ff_p}[\ul x]$. But then it follows from $ \langle f \rangle = {\mathfrak{p}} \supset \mathcal{I}$ that $f$ divides each polynomial $\overline{P_i}^p$ in $\overline{\Ff_p}[\ul x]$, $i=1,\ldots,s$, a contradiction. Conclude that $\dim {\overline{\Ff_p}[\ul x]/\mathcal{I}} \leq r-2$.

The first assertion of Corollary \ref{cor:bezout} then readily follows from the General B\'ezout theorem, and the second one from this easy estimate:
$$\mu(\mathcal{Q}_p) = \frac{c_p}{p^r} \le  \frac{d^s \cdot p^{r-2}}{p^r} = \frac{d^s}{p^2} =  O\left(\frac{1}{p^2}\right).$$
\end{proof}

\subsection{The set $\mathcal{R}_{\le M}$}

Let $M\ge0$, let $\{p_1,\ldots,p_\ell\}$ be the set of primes $\le M$ and $N$ be the product of these primes.

The Chinese Remainder Theorem gives an isomorphism from
$\Zz/N\Zz$ to $\Zz/p_1\Zz \times \cdots \times \Zz/p_\ell\Zz$,
which we extend to the dimension $r$ by

\centerline{$\ul {n} \in (\Zz/N\Zz)^r \mapsto (\ul{n_1},\ldots,\ul{n_\ell}) \in (\Zz/p_1\Zz)^r \times \cdots \times (\Zz/p_\ell\Zz)^r$,}

where $\ul{n_j}$ is $\ul {n}$ modulo $p_j$.
We have a 1-1 correspondence between the sets 
$\mathcal{R}_{\le M}$ and $\mathcal{R}_{p_1}\times \cdots \times \mathcal{R}_{p_\ell}$. 
Namely:
\begin{align*}
     & \ul{n} \in \mathcal{R}_{\le M} \cap \llbracket 0,N-1 \rrbracket^r \\
\iff\quad & \forall j \in \{1,\ldots,\ell\} \quad \exists i \in \{1,\ldots,s\} \qquad P_i(\ul n) \neq 0 \pmod {p_j} \\
\iff\quad & \forall j \in \{1,\ldots,\ell\} \quad \exists i \in \{1,\ldots,s\} \qquad P_i(\ul {n_j}) \neq 0 \pmod {p_j} \\ 
\iff\quad &\forall j \in \{1,\ldots,\ell\}  \quad \ul {n_j}\in \mathcal{R}_{p_j} \cap \llbracket 0,p_j-1 \rrbracket^r. \\
\end{align*}

Recall that $\mathcal{R}_p = \Zz^r \setminus \mathcal{Q}_p$. Thus, with \eqref{eq:cp}, we obtain:
$$\# (\mathcal{R}_{p} \cap \llbracket 0,p-1 \rrbracket^r)
= p^r - \# (\mathcal{Q}_{p} \cap \llbracket 0,p-1 \rrbracket^r)
= p^r - c_p.$$
Whence:
$$\# (\mathcal{R}_{\le M} \cap \llbracket 0,N-1 \rrbracket^r)
= \prod_{j=1}^\ell (p_j^r - c_{p_j}).$$

This provides the density of $\mathcal{R}_{\le M}$:
$$\mu(\mathcal{R}_{\le M}) 
= \lim_{B \to + \infty} \frac{\# (\mathcal{R}_{\le M} \cap \mathbb{B})}{\# \mathbb{B}}
= \lim_{B \to + \infty} \frac{\left(\frac{B}{N}\right)^r \prod_{j=1}^\ell (p_j^r - c_{p_j})}{B^r}
= \prod_{j=1}^\ell \left(1 - \frac{c_{p_j}}{p_j^r}\right).$$
Whence:
\begin{equation}
\label{eq:densityrm}
\mu(\mathcal{R}_{\le M}) = \prod_{p \le M} \left(1 - \frac{c_{p}}{p^r}\right)
\end{equation}

\subsection{Limit of $\mathcal{Q}_{>M}$}

\begin{lemma} We have:
	\begin{equation}
		\label{eq:limitqm}
		\mu(\mathcal{Q}_{>M}) \xrightarrow[M \to +\infty]{} 0
	\end{equation}
\end{lemma}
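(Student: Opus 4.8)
The statement to prove is that $\mu(\mathcal{Q}_{>M}) \to 0$ as $M \to +\infty$, where $\mathcal{Q}_{>M} = \bigcup_{p>M} \mathcal{Q}_p$ is the set of $\ul n \in \Zz^r$ divisible, in the sense that $p \mid \gcd_i P_i(\ul n)$, by some prime $p > M$. The first issue is that $\mathcal{Q}_{>M}$ is an infinite union of periodic sets, and the density $\mu$ is only finitely additive and not obviously countably subadditive; moreover individual primes $p$ can be enormous relative to the box $\mathbb{B}$, so the naive bound $\mu(\mathcal{Q}_{>M}) \le \sum_{p>M} \mu(\mathcal{Q}_p) = \sum_{p>M} c_p/p^r$ needs justification for the finite-box approximations, not just in the limit. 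So the plan is to pass through finite truncations carefully.

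\textbf{Key steps.} First I would split $\mathcal{Q}_{>M}$ according to the size of the offending prime $p$ relative to the box size $B$: write $\mathcal{Q}_{>M} \cap \mathbb{B} \subseteq A_{M,B} \cup C_B$ where $A_{M,B}$ collects the $\ul n \in \mathbb{B}$ for which some prime $p$ with $M < p \le B^{\varepsilon}$ (or $p \le \sqrt{B}$, a convenient threshold) divides all $P_i(\ul n)$, and $C_B$ collects those for which only a large prime $p > \sqrt{B}$ works. For the first piece, use the exact/asymptotic count $\#(\mathcal{Q}_p \cap \mathbb{B}) \le c_p (B/p)^r + O(c_p (B/p)^{r-1})$ together with the Bézout bound $c_p \le d^s p^{r-2}$ from Corollary \ref{cor:bezout}, giving $\#(A_{M,B} \cap \mathbb{B})/B^r \le \sum_{M < p \le \sqrt B} c_p/p^r + (\text{lower order}) \le d^s \sum_{M<p} 1/p^2 + o(1)$, and the tail $\sum_{p>M} 1/p^2 \to 0$. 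For the second piece $C_B$, the point is that if a prime $p > \sqrt B$ divides $\gcd_i P_i(\ul n)$ then in particular $p$ divides $P_1(\ul n)$, a nonzero integer of size $O(B^\ell)$ where $\ell = \max_i \deg P_i$; such a value has at most $O(\log B)$ prime factors exceeding $\sqrt B$ — in fact at most $2\ell$ of them since each exceeds $B^{1/2}$ and the product is $O(B^\ell)$ — so one counts $\ul n$ with $P_1(\ul n)$ divisible by some fixed large prime. Summing $\sum_{\sqrt B < p \le C B^\ell} \#\{\ul n \in \mathbb{B} : p \mid P_1(\ul n)\}$ and using that $\#\{\ul n \in \mathbb{B} : p \mid P_1(\ul n)\} \ll (B/p + 1)^r \ll B^{r-1}$ for $p$ in this range (here one uses the one-variable Schwartz–Zippel-type count in the first coordinate after fixing the others, since $P_1$ is nonzero), one gets $\#(C_B \cap \mathbb{B}) \ll B^{r-1} \cdot \#\{p : \sqrt B < p \le C B^\ell\} \ll B^{r-1} \cdot B^\ell/\log B$, which is unfortunately not $o(B^r)$ — so the threshold must be chosen more carefully, say $p \le B^{1-\eta}$ for the first regime and handle $B^{1-\eta} < p$ by the divisor-counting argument with the sharper observation that a fixed $\ul n$ contributes to at most $O_\ell(1)$ such primes, hence $\#(C_B \cap \mathbb{B}) \le \sum_{\ul n \in \mathbb{B}} \#\{p > B^{1-\eta} : p \mid P_1(\ul n)\} \ll B^r \cdot (\ell/((1-\eta)\log B)) = o(B^r)$.

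\textbf{Conclusion and main obstacle.} Combining, for every $\delta > 0$ choose $M$ so that $d^s \sum_{p > M} p^{-2} < \delta$; then $\limsup_{B\to\infty} \#(\mathcal{Q}_{>M}\cap\mathbb{B})/B^r \le \delta + 0$, and since this holds for all such $M$ we get $\mu(\mathcal{Q}_{>M}) \to 0$ (one should also check $\mathcal{Q}_{>M}$ actually has a density, or work throughout with $\limsup$, which suffices for the subsequent application $\mu(\mathcal{R}_{\le M}) \to \mu(\mathcal{R})$ via $\mathcal{R}_{\le M} \setminus \mathcal{R} \subseteq \mathcal{Q}_{>M}$). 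The main obstacle, and the step deserving the most care, is the uniform control of the "large prime" contribution $C_B$: one must not simply bound $\mu(\mathcal{Q}_p)$ prime-by-prime (that loses the game when $p \gg B$) but instead bound the total number of lattice points in $\mathbb{B}$ that are divisible by \emph{some} large prime, exploiting that each such point has boundedly many large prime factors because $|P_1(\ul n)|$ is polynomially bounded in $B$. Everything else — the asymptotic count of $\mathcal{Q}_p \cap \mathbb{B}$, the Bézout bound, convergence of $\sum p^{-2}$ — is routine given the results already established.
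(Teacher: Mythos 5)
Your decomposition $\mathcal{Q}_{>M}\cap\mathbb{B}\subseteq A_{M,B}\cup C_B$ and your treatment of the medium-prime piece $A_{M,B}$ (Bézout bound $c_p\le d^s p^{r-2}$ plus convergence of $\sum 1/p^2$) match the paper's handling of $\mathcal{Q}_{>M,\le B}$. The large-prime piece, however, is where your argument breaks down, and the gap is not a technicality.

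You bound $\#C_B \le \sum_{\ul n\in\mathbb{B}}\#\{p>B^{1-\eta}:p\mid P_1(\ul n)\}$ and claim this is $\ll B^r\cdot\ell/((1-\eta)\log B)=o(B^r)$. But the number of primes $>B^{1-\eta}$ dividing a fixed nonzero integer of size $O(B^\ell)$ is at most $\ell/(1-\eta)+o(1)$ — a \emph{constant}, not $o(1)$ — so the honest bound is $O(B^r)$, which tells you nothing. The factor $1/\log B$ you inserted has no justification. Worse, your estimate uses only divisibility of $P_1(\ul n)$ and never invokes coprimality of $P_1,\ldots,P_s$, and the statement is genuinely false without coprimality: take $P_1=P_2=x$ in one variable, so $\mathcal{Q}_{>M}$ is the set of integers with a prime factor $>M$, which has density $1$ for every $M$. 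Any correct proof must exploit that for coprime polynomials, the set of $\ul n$ where a single prime divides \emph{all} of $P_1(\ul n),\ldots,P_s(\ul n)$ is rare — and this is exactly what your single-polynomial argument discards.

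The paper's approach to the large-prime regime is quite different. After reducing to the case where the $P_i$ are irreducible, it uses a Bézout identity $\sum A_iP_i=\Delta(x_1,\ldots,x_{r-1})$ to replace one of the polynomials by a nonzero polynomial in $r-1$ variables, arranges that the leading coefficients of the other $P_i$ in $x_r$ are not divisible by it, and then proceeds by induction on $r$, splitting $\mathcal{Q}_{>B}\cap\mathbb{B}$ into three pieces: where $P_s(\ul n)=0$ (Zippel–Schwartz), where the large prime also divides all leading coefficients (inductive hypothesis in $r-1$ variables), and the main piece, where for fixed $(n_1,\ldots,n_{r-1})$ the nonzero value $P_s(\ul n)$ has only $O(1)$ prime factors $>B$, and each such prime forces $n_r$ into $O(1)$ residues. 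That gives $O(B^{r-1})$ points — the $o(B^r)$ you were after — and it is precisely the Bézout-identity reduction (hence coprimality) that makes this possible. You would need to incorporate something of this kind to fix your argument.
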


The proof (here, for several polynomials) is similar to \cite[Lemma 5.1]{Poo} (for two polynomials) with some simplifications.

\begin{proof}
	
	Fix $M$ and $B \ge M$ and consider the decomposition:
	$$\mathcal{Q}_{>M} = \mathcal{Q}_{>M, \le B} \cup \mathcal{Q}_{>B},$$
	where $\mathcal{Q}_{>M, \le B} = \bigcup_{M < p \le B} \mathcal{Q}_p$ and
	$\mathcal{Q}_{>B} = \bigcup_{p > B} \mathcal{Q}_p$.
	We will prove that each term has a relatively small cardinal compared to $B^r = \# \mathbb{B}$, where $\mathbb{B} = \llbracket 0,B-1 \rrbracket^r$. 
	
	\medskip
	
	\textbf{Estimate of $\mathcal{Q}_{>M, \le B}$.}
	
	From Corollary \ref{cor:bezout},  
	we have $c_p = \# (\mathcal{Q}_{p} \cap \llbracket 0, p-1 \rrbracket^r) \le d^s \cdot p^{r-2}$, which gives $\# (\mathcal{Q}_{p} \cap \mathbb{B}) \le C p^{r-2} \left(\frac{B}{p}\right)^r$
	for some constant $C$ (depending only on $d$ and $s$). Thus we obtain:
	\begin{equation}
		\label{eq:QMB}
		\frac{\# (\mathcal{Q}_{>M} \cap \mathbb{B})}{\# \mathbb{B}} 
		\le \sum_{M < p \le B} \frac{\# (\mathcal{Q}_{p} \cap \mathbb{B})}{\# \mathbb{B}}  
		\le C \sum_{M < p \le B} \frac{1}{p^2} 
		\le C \sum_{p > M} \frac{1}{p^2} 
	\end{equation}
	The last term does not depend on $B$ and tends to $0$ as $M \to +\infty$.

	\medskip
	
	\textbf{Estimate of $\mathcal{Q}_{>B}$.}
	
	\emph{Preliminaries.}
	Firstly, we may reduce to the case where each polynomial $P_i$ is irreducible in $\Zz[\ul x]$. Indeed assume $P_1 = Q \cdot R$ with $Q,R \in \Zz[\ul x]$. If $p | P_1(\ul n)$ then $p | Q(\ul n)$ or $p | R(\ul n)$. Hence $\mathcal{Q}_{>B}(QR,P_2,\ldots,P_s) \subset \mathcal{Q}_{>B}(Q,P_2,\ldots,P_s) \cup \mathcal{Q}_{>B}(R,P_2,\ldots,P_s)$.
	
	Secondly, we may also reduce to the case where one of the polynomials, say $P_s$, is a polynomial in $x_1,\ldots,x_{r-1}$ only. Namely, as $P_1,\ldots,P_s$ are coprime, we have a Bézout identity $\sum_{i=1}^s A_i(\ul x)P_i(\ul x) = \Delta(x_1,\ldots,x_{r-1})$
	with $A_i \in \Zz[\ul x]$, $i=1,\ldots,s$ and $\Delta \in \Zz[x_1,\ldots,x_{r-1}]$, $\Delta \neq 0$.
	If $p | P_i(\ul n)$ for $i=1,\ldots,s$ then $p | \Delta(\ul n)$. Hence
	$\mathcal{Q}_{>B}(P_1,\ldots,P_s) \subset \mathcal{Q}_{>B}(P_1,\ldots,P_{s-1},\Delta)$
	and $P_1,\ldots,P_{s-1},\Delta$ are coprime polynomials (if some nonconstant polynomial $R$ divides $P_1$, a polynomial where all the variables $x_1,\ldots, x_r$ occur, then $R=P_1$, up to some multiplicative constant, because $P_1$ is irreducible, but then $R$ cannot divide $\Delta$ in the variables $x_1,\ldots,x_{r-1}$ only). 
	
	Thirdly, we may assume that the leading coefficient of $P_i(\ul x)$, $i=1,\ldots,s-1$, seen as a polynomial in $x_r$, is not divisible by the last polynomial $P_s(x_1,\ldots,x_{r-1})$.
	Indeed, write $P_i(\ul x) = P_i^{0}(x_1,\ldots,x_{r-1}) x_r^{\delta_i} + \cdots \in \Zz[x_1,\ldots,x_{r-1}][x_r]$. If $P_i^0 = Q_i P_s$, then $P_i' = P_i - Q_i P_s x_r^{\delta_i}$ is a polynomial with $\deg_{x_r}(P_i') < \deg_{x_r}(P_i)$. We proceed by induction on $\deg_{x_r}(P_i)$ until $P_s$ does not divide $P_i^0$ (or $\deg_{x_r}(P_i)=0$). Note that the set $\mathcal{Q}_p$ is preserved in this process:
	$p | P_i(\ul n)$ and $P_s(\ul n)$ iff $p$ divides $(P_i - Q_i P_s x_r^{\delta_i})(\ul n)$ and $P_s(\ul n)$. It may then be necessary to apply the first reduction to this new set of polynomials, to get irreducible polynomials.

	We prove below that 
	\begin{equation}
		\label{eq:QB}	
		\frac{\# (\mathcal{Q}_{>B} \cap \mathbb{B})}{\# \mathbb{B}} \xrightarrow[B \to +\infty]{} 0
	\end{equation}
	
	\emph{Induction.} The proof is by induction on the dimension $r$.
	For $r=1$, a Bézout identity $\sum_{i=1}^s A_i(x)P_i(x) = \Delta$ (with $A_i \in \Zz[x]$, $\Delta \in \Zz$, $\Delta \neq 0$) implies that if $p | P_i(n)$ for every $i=1,\ldots,s$, then $p | \Delta \in \Zz$. Hence for $B>\Delta$, $\mathcal{Q}_{>B} = \varnothing$.
	
	For $r \ge 1$, we introduce the three following subsets $\mathcal{S}_1$, $\mathcal{S}_2$, $\mathcal{S}_3$ and work with the inclusion:
	$\mathcal{Q}_{>B} \cap \mathbb{B} \subset \mathcal{S}_1 \cup \mathcal{S}_2 \cup \mathcal{S}_3$.
	\begin{itemize}
		\item $\mathcal{S}_1 = \{ \ul n \in \mathbb{B} \mid P_s(\ul n) = 0 \}$. By the Zippel--Schwartz lemma, $\#\mathcal{S}_1/\#\mathbb{B}$ tends to $0$ as $B \to +\infty$.
		
		\item $\mathcal{S}_2 = \{ \ul n \in \mathbb{B} \mid \exists p > B, p|P_1^0(\ul n),\ldots,p | P_{s-1}^0(\ul n), p | P_s(\ul n)  \}$. By induction, $\#\mathcal{S}_2/\#\mathbb{B}$ tends to $0$ as $B \to +\infty$.
		
		\item $\mathcal{S}_3 = \{ \ul n \in \mathbb{B} \mid P_s(\ul n) \neq 0, \exists p > B,  p|P_1(\ul n),\ldots, p | P_s(\ul n),  p \nmid P_{i_0}^0(\ul n) \text{ for some } 1 \le i_0 < s \}$.
		For all $\ul n \in \mathbb{B}$, $P_s(\ul n) = O(B^{\gamma})$ with $\gamma = \deg(P_s)$. 
		Fix $(n_1,\ldots,n_{r-1}) \in \llbracket 0,B-1 \rrbracket^{r-1}$ and consider a $r$-tuple $\ul n= (n_1,\ldots,n_{r-1},n_r)$ in the set $\mathcal{S}_3$. For any sufficiently large $B$, there are at most $\gamma$ possible primes $p>B$ such that $p$ divides the nonzero integer $P_s(\ul n)$.	
		Pick such a prime $p$ and let $i\in \{1,\ldots,s-1\}$ be an index such that $p \nmid P_{i}^0(n_1,\ldots,n_{r-1})$. Then write: 
		$$P_{i}(n_1,\ldots,n_{r-1},x)
		= P_{i}^0(n_1,\ldots,n_{r-1}) x^{\delta_i} + \cdots$$
		There are at most $\delta_i$ integers $x=n_r$ with $0 \le n_r < p$, hence a fortiori with $0 \le n_r < B$, such that $p | P_{i}(n_1,\ldots,n_{r-1},n_r)$.
		This shows that for each $(n_1,\ldots,n_{r-1}) \in \llbracket 0,B-1 \rrbracket^{r-1}$, there are at most $C'=(s-1) \cdot \gamma \cdot \delta_1 \cdots \delta_{s-1}$ values of $n_r \in \llbracket 0, B-1 \rrbracket$ such that $(n_1,\ldots,n_{r-1},n_r) \in \mathcal{S}_3$.
		Hence $\#\mathcal{S}_3/\#\mathbb{B} \le \frac{B^{r-1} \cdot C'}{B^r} = \frac{C'}{B}$ tends to $0$ as $B \to +\infty$.
		
		
	\end{itemize}
	
	\medskip
	
	\textbf{Conclusion.}
	As $\mathcal{Q}_{>M} = \mathcal{Q}_{>M, \le B} \cup \mathcal{Q}_{>B}$,
	then by \eqref{eq:QMB}:
	$$\frac{\#\mathcal{Q}_{>M}}{\# \mathbb{B}} \le 
	\frac{\# (\mathcal{Q}_{>M, \le B} \cap \mathbb{B})}{\# \mathbb{B}} 
	+ \frac{\# (\mathcal{Q}_{>B} \cap \mathbb{B})}{\# \mathbb{B}}
	\le C \sum_{p > M} \frac{1}{p^2} + \frac{\# (\mathcal{Q}_{>B} \cap \mathbb{B})}{\# \mathbb{B}}.$$
	By \eqref{eq:QB}, the last term, tends to $0$ as $B \to +\infty$,
	and the first term tends to $0$ as $M  \to +\infty$.
	It yields that $\mu(\mathcal{Q}_{>M}) \xrightarrow[M \to +\infty]{} 0$.

\end{proof}
		

\subsection{Limit of $\mathcal{R}_{\le M}$}

We have $\mathcal{R} \subset \mathcal{R}_{\le M}$.
Note that $\mathcal{R}_{\le M} \setminus \mathcal{R}
 \subset \mathcal{Q}_{> M}$: in fact $\mathcal{R}_{\le M} \setminus \mathcal{R}$
is the set of $\ul n$ for which the primes $p$ that divide all the $P_i(\ul n)$ verify $p>M$, such $\ul n$ are in the union of
the $\mathcal{Q}_{p}$, for $p>M$. 

Consider the decomposition:
$$\mathcal{R}_{\le M} 
= \mathcal{R} \cup (\mathcal{R}_{\le M} \setminus \mathcal{R})
\subset \mathcal{R} \cup \mathcal{Q}_{> M}.
$$ 

It yields the inequalities:
$$\mu(\mathcal{R}) \le \mu(\mathcal{R}_{\le M} ) \le \mu(\mathcal{R}) + \mu(\mathcal{Q}_{> M}).$$

As, by \eqref{eq:limitqm}, $\mu(\mathcal{Q}_{>M}) \xrightarrow[M \to +\infty]{} 0$, we obtain
\begin{equation}
\label{eq:limitrm}
\mu(\mathcal{R}_{\le M}) \xrightarrow[M \to +\infty]{} \mu(\mathcal{R})
\end{equation}

As $\mu(\mathcal{R}_{\le M}) = \prod_{p \le M} \left(1 - \frac{c_{p}}{p^r}\right)$ by \eqref{eq:densityrm},
then
$$\mu(\mathcal{R}) = \prod_{p \in \mathcal{P}} \left(1 - \frac{c_{p}}{p^r}\right).$$

This infinite product is nonzero if no prime $p$ divides all the values of $P_1(\ul n),\ldots,P_s(\ul n)$ for all $\ul n\in \Zz^r$, 
i.e., if $c_p \neq p^r$ for all primes $p$.

\subsection{Generalization to several families of polynomials} \label{ssec:gen_sev_fam} \label{ssec:gen_sev_pol}

Consider $\ell\geq 1$ families ${\mathcal P}_j=\{ P_{j1}(\ul x),\ldots,P_{js_j}(\ul x)\}$ of nonzero coprime polynomials in $\Zz[\ul x]$, $j=1,\ldots,\ell$.
For each $j=1,\ldots,\ell$, consider the set
$$\mathcal{R}({\mathcal P}_j) = \left\{ \ul n \in \Zz^r \mid \gcd (P_{j1}(\ul n),\ldots,P_{\ell s_\ell}(\ul n)) = 1 \right\}.$$

Our goal is to evaluate the set $\mathcal{R} = \bigcap_{j=1}^\ell \mathcal{R}({\mathcal P}_j)$.

\begin{proposition} \label{prop:poonen-gen} 
Let $\Pi = {\mathcal P}_1 \cdots {\mathcal P}_\ell \subset \Zz[\ul x]$ be the set of all possible
products $A_1 \cdots A_\ell$ with $A_j\in {\mathcal P}_j$ for $j=1,\ldots,\ell$. Then we have the following:
\begin{itemize}
  \item[(a)] The elements of $\Pi$ are nonzero coprime polynomials (in $\Qq[\ul x]$). 
  \item[(b)] $\mathcal{R} = \mathcal{R}(\Pi)$.
  \item[(c)] $\displaystyle \mu(\mathcal{R}) = \prod_{p \in \mathcal{P}} \left( 1 - \frac{c_p}{p^r} \right)$
where 
$c_p = \# \big\{ \ul n \in (\Zz/p\Zz)^r \mid Q(\ul n) = 0 \pmod p, \forall Q \in \Pi \big\}$.
  \item[(d)] For every $p\in {\mathcal P}$, we have $c_p=p^r$ if and only if for every $\ul n \in \Zz^r$, there exists $j\in \{1,\ldots,\ell\}$ such that the prime  $p$ divides all values $P_{j1}(\ul n), \ldots, P_{js_j}(\ul n)$.
\end{itemize}
\end{proposition}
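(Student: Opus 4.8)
The strategy is to deduce all four assertions from one elementary fact about divisibility, and then to feed the single family $\Pi$ into Theorem \ref{th:poonen}. The fact I would isolate first, stated in an arbitrary UFD $R$ so that it applies verbatim both in $R = \Qq[\ul x]$ and in $R = \Zz$, is the following: if $\pi \in R$ is a prime element and $F_1,\ldots,F_\ell$ are finite subsets of $R$, then $\pi$ divides the product $a_1 \cdots a_\ell$ for \emph{every} $(a_1,\ldots,a_\ell) \in F_1 \times \cdots \times F_\ell$ if and only if there is an index $j_0$ such that $\pi$ divides every element of $F_{j_0}$. One implication is trivial; for the other, if no such $j_0$ exists, pick $a_j^\star \in F_j$ with $\pi \nmid a_j^\star$ for each $j$ and use that $\pi$, being prime, cannot divide $a_1^\star \cdots a_\ell^\star$.

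Assertion (a) is then immediate: an element of $\Pi$ is a product of nonzero elements of the domain $\Zz[\ul x]$, hence nonzero; and if some $D \in \Qq[\ul x]$ of positive degree divided every element of $\Pi$, an irreducible factor $\pi$ of $D$ would too, and applying the fact in $\Qq[\ul x]$ with $F_j = \mathcal{P}_j$ would yield an index $j_0$ with $\pi$ dividing every $P_{j_0 i}(\ul x)$ --- impossible, since $\mathcal{P}_{j_0}$ is coprime and $\deg \pi \geq 1$.

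For (b) and (d) I would apply the numerical form of the fact, taking $R = \Zz$ and, for a fixed $\ul n \in \Zz^r$, $F_j = \{P_{j1}(\ul n),\ldots,P_{js_j}(\ul n)\}$: for each prime $p$, $p$ divides $Q(\ul n)$ for every $Q \in \Pi$ if and only if there is a $j$ with $p \mid P_{ji}(\ul n)$ for all $i$. Since a gcd of integers equals $1$ exactly when no prime divides all of them, the last property fails for every $p$ precisely when $\gcd_i P_{ji}(\ul n) = 1$ for each $j$, that is when $\ul n \in \mathcal{R}$; while ``$p \mid Q(\ul n)$ for all $Q \in \Pi$'' fails for every $p$ precisely when $\gcd_{Q\in\Pi} Q(\ul n) = 1$, that is when $\ul n \in \mathcal{R}(\Pi)$. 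This proves (b). For (d), observe that divisibility of $Q(\ul n)$ by $p$ only depends on $\ul n$ modulo $p$, so $c_p = p^r$ means exactly that $p \mid Q(\ul n)$ for all $Q\in\Pi$ and all $\ul n \in \Zz^r$, which by the numerical fact is equivalent to the condition stated in (d).

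Finally, (c) follows by applying Theorem \ref{th:poonen} to $\Pi$, which by (a) is a family of nonzero coprime polynomials, and which has at least two elements (each $\mathcal{P}_j$ has $s_j \geq 2$ elements, as coprimality is defined only for $s \geq 2$ polynomials, and the map $A_1 \mapsto A_1 A_2 \cdots A_\ell$ with $A_2,\ldots,A_\ell$ fixed is injective on $\mathcal{P}_1$): this gives $\mu(\mathcal{R}(\Pi)) = \prod_p (1 - c_p/p^r)$ with $c_p$ counting the common zeros modulo $p$ of the elements of $\Pi$, exactly the $c_p$ of the statement, and (b) identifies $\mathcal{R}(\Pi)$ with $\mathcal{R}$. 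The only step deserving attention --- the ``main obstacle'', modest as it is --- is the nontrivial direction of the divisibility fact, namely that dodging $\pi$ (or $p$) in each factor separately lets one dodge it in a single well-chosen product; this is nothing more than primality of $\pi$ (resp. $p$).
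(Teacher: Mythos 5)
Your proof is correct and follows essentially the same route as the paper. The single divisibility lemma you isolate (a prime dividing every product $a_1\cdots a_\ell$ with $a_j\in F_j$ must divide every element of some $F_{j_0}$) is the elementwise form of the ideal-theoretic argument the authors use---that the ideal generated by $\Pi$ is the product $\langle\mathcal{P}_1\rangle\cdots\langle\mathcal{P}_\ell\rangle$, and a prime ideal containing a product of ideals contains a factor; both in $\Qq[\ul x]$ for (a) and in $\Zz$ for (b) and (d). Your phrasing is slightly more elementary and has the merit of making the parallel between (a), (b), (d) completely transparent, while (c) is in both cases just an application of Theorem \ref{th:poonen} to $\Pi$ via (a) and (b).
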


Note that $c_p$ can also be computed with the following formula: 
$$c_p = \# \  \bigcup_{j=1}^\ell \left\{ \ul n \in (\Zz/p\Zz)^r \mid P_{j1}(\ul n) = 0 \pmod p, \ldots,P_{js_j}(\ul n) = 0 \pmod p \right\}.$$
This equality follows from the relation $V(I\cdot J)=V(I) \cup V(J)$ for ideals and their varieties, applied to $\Pi = {\mathcal P}_1 \cdots {\mathcal P}_\ell$.

\begin{proof} 
(a) Assume that some irreducible polynomial $D\in \Qq[\ul x]$ divides all elements of $\Pi$. Then the product of all ideals $\langle {\mathcal P}_j\rangle \subset \Qq[\ul x]$ ($j=1,\ldots,\ell$), which 
is generated by the set $\Pi$, is contained in the ideal $\langle D \rangle \subset \Qq[\ul x]$. As $\langle D \rangle$ is a prime ideal, 
we have  $\langle {\mathcal P}_j\rangle\subset \langle D \rangle$ for some $j\in \{1,\ldots,\ell\}$. This contradicts $P_{j1}(\ul x),\ldots,P_{js_j}(\ul x)$ being coprime.
\smallskip

(b) $\mathcal{R}(\Pi) \subset \mathcal{R}$: If $\underline n\notin \mathcal{R}$, i.e., $\underline n\notin\mathcal{R}({\mathcal P}_j)$ for some $j\in \{1,\ldots,\ell\}$, then some prime $p$ divides $P_{j1}(\ul n),\ldots, P_{js_\ell}(\ul n)$.
Clearly then, $p$ divides all $Q(\ul n)$ with $Q\in \Pi$, i.e., $\underline n\notin \mathcal{R}(\Pi)$.
\smallskip

$\mathcal{R}(\Pi) \supset \mathcal{R}$: Let $\underline n\notin \mathcal{R}(\Pi)$, i.e., some prime $p$ divides all $Q(\underline n)$ with $Q\in \Pi$. Observe that 
 the ideal generated by all these $Q(\ul n)$ is the product of the ideals $\langle P_{j1}(\ul n),\ldots, P_{js_j}(\ul n)\rangle \subset \Zz$ with $j$ ranging over $\{1,\ldots, s\}$.
 So this product is contained in $p\Zz$. But then $p\Zz$ must contain some 
ideal $\langle P_{j1}(\ul n),\ldots, P_{js_j}(\ul n)\rangle$; hence $\underline n\notin\mathcal{R}({\mathcal P}_j)$ and so $\underline n \notin \mathcal{R}$.
\smallskip

(c) Follows from (b) and the Ekedahl--Poonen formula, for the case $\ell=1$, with the polynomials in $\Pi$ (Theorem \ref{th:poonen}).
\smallskip

(d) We have $c_p=p^r$ if and only if $p$ divides all $Q(\ul n)$ with $Q\in \Pi$ for every $\ul n\in \Zz^r$. Arguing as in (b) above for each fixed $\ul n\in \Zz^r$, we obtain that,
for each $\ul n$, $p$ divides $P_{j1}(\ul n),\ldots, P_{js_j}(\ul n)$ for some $j\in \{1,\ldots,\ell\}$, which is the claimed condition. The converse is clear.
\end{proof}

Finally we can give the proof of Theorem \ref{cor:HS} in the general case $\ell \geq 1$.

\begin{proof}[Proof of Theorem \ref{cor:HS} ($\ell \geq 1$)] 
Let $P_1(\underline x,\underline y), \ldots, P_\ell(\underline x,\underline y)$ be as in the statement.
The first point is based on the same result of Cohen used in the case $\ell=1$. Specifically let ${\mathcal H}(P_1,\ldots,P_\ell)$ 
be the subset of $\Zz^r$ of all $\underline n$ such that $P_1(\underline n,\underline y), \ldots, P_\ell(\underline n,\underline y)$ 
are irreducible in $\Qq [\underline y]$. From Theorem 1 of \cite[\S 13]{serreMW},  we have $\tilde\mu({\mathcal H}(P_1,\ldots,P_\ell))=1$,
with $\tilde \mu$ the density introduced in Remark \ref{rem:density}.

For each $j=1,\ldots,\ell$, denote by ${\mathcal P}_j\subset \Qq[\ul x]$ the set of coefficients $P_{j1}(\ul x),\ldots, P_{js_j}(\ul x)$ of $P_j$, viewed as a polynomial in $\underline y$; 
these polynomials are coprime. Using then the notation of Proposition \ref{prop:poonen-gen}, the set ${\mathcal R}\subset \Zz^r$ is the subset of all $\ul n$ such that the 
polynomials $P_1(\underline n,\underline y), \ldots, P_\ell(\underline n,\underline y)$ are primitive.
Thus, for every $\ul n\in H={\mathcal H}(P_1,\ldots,P_\ell) \cap \textstyle \mathcal{R}$, the polynomials $P_1(\underline n,\underline y), \ldots, P_\ell(\underline n,\underline y)$ 
are irreducible in $\Zz[\ul y]$. 

Observe that the assumption that there is no prime $p$ such that $\prod_{j=1}^\ell P_j(\underline n,\underline y) \equiv 0 \pmod{p}$ for every $\underline n \in \Zz^r$ forbids
the equivalent conditions from Proposition \ref{prop:poonen-gen}(d) to happen. Thus, by Proposition  \ref{prop:poonen-gen}(c), we have $\mu(\mathcal{R})>0$, and also
$\tilde \mu(\mathcal{R})>0$ (as explained in Remark \ref{rem:density}). Conclude that $\tilde \mu(H)>0$ as well.
\end{proof}


\bibliographystyle{plain}
\bibliography{ngcd16.bib}

\end{document}